\newcommand{\C}{{\mathbb C}}
\newcommand{\bbC}{{\mathbb{C}}}
\newcommand{\bbN}{{\mathbb{N}}}
\newcommand{\bbR}{{\mathbb{R}}}
\newcommand{\bsA}{{\boldsymbol{A}}}
\newcommand{\bsB}{{\boldsymbol{B}}}
\newcommand{\bsD}{{\boldsymbol{D}}}
\newcommand{\bsH}{{\boldsymbol{H}}}
\newcommand{\bsI}{{\boldsymbol{I}}}
\newcommand{\bsT}{{\boldsymbol{T}}}
\newcommand{\cA}{{\mathcal A}}
\newcommand{\cB}{{\mathcal B}}
\newcommand{\cC}{{\mathcal C}}
\newcommand{\cD}{{\mathcal D}}
\newcommand{\cE}{{\mathcal E}}
\newcommand{\cH}{{\mathcal H}}
\newcommand{\cK}{{\mathcal K}}
\newcommand{\cL}{{\mathcal L}}
\newcommand{\cS}{{\mathcal S}}
\newcommand{\Lf}{f_L}
\newcommand{\Lmodf}{|f|_L}
\newcommand{\LSf}{(Sf)_L}
\newcommand{\Lxi}{\xi_L}
\newcommand{\e}{\varepsilon}
\newcommand{\x}{\xi}
\DeclareMathOperator{\supp}{supp}
\DeclareMathOperator{\ran}{ran}
\DeclareMathOperator{\dom}{dom}
\DeclareMathOperator{\tr}{tr}
\DeclareMathOperator{\erf}{erf}
\DeclareMathOperator*{\nlim}{n-lim}
\DeclareMathOperator*{\slim}{s-lim}
\DeclareMathOperator*{\sgn}{sgn}
\renewcommand{\Re}{\text{\rm Re}}
\renewcommand{\Im}{\text{\rm Im}}
\renewcommand{\ln}{\text{\rm ln}}
\newcommand{\loc}{\text{\rm{loc}}}
\newcommand{\ind}{\operatorname{ind}}
\newcommand{\no}{\notag}
\newcommand{\lb}{\label}
\newcommand{\f}{\frac}
\newcommand{\ol}{\overline}
\newcommand{\wti}{\widetilde}
\newcommand{\Oh}{O}
\newcommand{\oh}{o}
\newcommand{\hatt}{\widehat} 
\newcommand{\bi}{\bibitem}
\renewcommand{\le}{\leqslant}
\let\geq\geqslant
\let\leq\leqslant
\def\theequation{\@arabic\c@equation}
\numberwithin{equation}{section}
\newtheorem{theorem}{Theorem}[section]
\newtheorem{proposition}[theorem]{Proposition}
\newtheorem{lemma}[theorem]{Lemma}
\newtheorem{corollary}[theorem]{Corollary}
\newtheorem{definition}[theorem]{Definition}
\newtheorem{hypothesis}[theorem]{Hypothesis}
\newtheorem{example}[theorem]{Example}
\theoremstyle{remark}
\newtheorem{remark}[theorem]{Remark}
\begin{document}

\numberwithin{equation}{section}
\allowdisplaybreaks

\title[On the Witten index]{On the Witten Index in Terms of \\ Spectral Shift Functions}

\author[A.\ Carey]{Alan Carey}  
\address{Mathematical Sciences Institute, Australian National University, Kingsley St., 
Canberra, ACT 0200, Australia}  
\email{acarey@maths.anu.edu.au}  
\urladdr{http://maths.anu.edu.au/~acarey/}
  
\author[F.\ Gesztesy]{Fritz Gesztesy}  
\address{Department of Mathematics,
University of Missouri, Columbia, MO 65211, USA}
\email{gesztesyf@missouri.edu}
\urladdr{http://www.math.missouri.edu/personnel/faculty/gesztesyf.html}

\author[D.\ Potapov]{Denis Potapov}
\address{School of Mathematics and Statistics, UNSW, Kensington, NSW 2052,
Australia} 
\email{d.potapov@unsw.edu.au}

\author[F.\ Sukochev]{Fedor Sukochev}
\address{School of Mathematics and Statistics, UNSW, Kensington, NSW 2052,
Australia} 
\email{f.sukochev@unsw.edu.au}

\author[Y.\ Tomilov]{Yuri Tomilov}
\address{Faculty of Mathematics and Computer Science, Nicholas 
Copernicus University, ul.\ Chopina 12/18, 87-100 Torun, Poland, and Institute of Mathematics, Polish Academy of Sciences. \'Sniadeckich str. 8, 00-956 Warsaw, Poland}
\email{tomilov@mat.uni.torun.pl}

\dedicatory{Dedicated with admiration to Sergio Albeverio on the occasion of 
his 75th birthday.}
\thanks{To appear in {\it J. Analyse Math.}}

\date{\today}
\subjclass[2010]{Primary 47A53, 58J30; Secondary 47A10, 47A40.}
\keywords{Fredholm index, Witten index, spectral shift function, relative trace class perturbations.}

\begin{abstract} 
We study the model operator $\bsD_\bsA^{} = (d/dt) + \bsA$ in $L^2(\bbR;\cH)$ associated 
with the operator path $\{A(t)\}_{t=-\infty}^{\infty}$, where $(\bsA f)(t) = A(t) f(t)$ for 
a.e.\ $t\in\bbR$, and appropriate $f \in L^2(\bbR;\cH)$ (with $\cH$ a separable, complex 
Hilbert space). Denoting by $A_{\pm}$ the norm resolvent limits of $A(t)$ as $t \to \pm \infty$, 
our setup permits $A(t)$ in $\cH$ to be an unbounded, relatively trace class  
perturbation of the unbounded self-adjoint operator $A_-$, and no discrete spectrum 
assumptions are made on $A_{\pm}$. 

Introducing 
$\bsH_1 = \bsD_\bsA^* \bsD_\bsA^{}$,  $\bsH_2 = \bsD_\bsA^{} \bsD_\bsA^*$, 
the resolvent and semigroup regularized Witten indices of $\bsD_\bsA^{}$, denoted by 
$W_r(\bsD_{\bsA}^{})$ and $W_s(\bsD_{\bsA}^{})$, are defined by  
\begin{align*}
& W_r(\bsD_\bsA^{}) = \lim_{\lambda \uparrow 0} (- \lambda) 
\tr_{L^2(\bbR;\cH)}\big((\bsH_1 - \lambda \bsI)^{-1}
- (\bsH_2 - \lambda \bsI)^{-1}\big),       \\
& W_s(\bsD_\bsA^{}) = \lim_{t \uparrow \infty} \tr_{L^2(\bbR;\cH)} 
\big(e^{- t \bsH_1} - e^{-t \bsH_2}\big),     
\end{align*} 
whenever these limits exist. These regularized indices coincide with the Fredholm 
index of $\bsD_\bsA^{}$ whenever the latter is Fredholm. 

In situations where $\bsD_\bsA^{}$ ceases to be a Fredholm operator in $L^2(\bbR;\cH)$
we compute its resolvent (resp., semigroup) regularized Witten index in terms of the spectral 
shift function $\xi(\,\cdot\,;A_+,A_-)$ associated with the pair $(A_+, A_-)$ as follows: Assuming 
$0$ to be a right and a left Lebesgue 
point of $\xi(\,\cdot\,\, ; A_+, A_-)$, denoted by $\Lxi(0_+; A_+,A_-)$ and $\Lxi(0_-; A_+, A_-)$, 
we  prove that $0$ is also a right Lebesgue point of 
$\xi(\,\cdot\,\, ; \bsH_2, \bsH_1)$, denoted by $\Lxi(0_+; \bsH_2, \bsH_1)$, and that 
\begin{align*} 
W_r(\bsD_\bsA^{}) &= W_s(\bsD_\bsA^{})    \\ 
& = \Lxi(0_+; \bsH_2, \bsH_1)       \\ 
& = [\Lxi(0_+; A_+,A_-) + \Lxi(0_-; A_+, A_-)]/2,     
\end{align*}
the  principal result of this paper.

In the special case where $\dim(\cH) < \infty$, we prove that the Witten indices 
of $\bsD_\bsA^{}$ are either integer, or half-integer-valued. 
\end{abstract}

\maketitle

\newpage 

{\scriptsize{\tableofcontents}}

\section{Introduction}  \lb{s1}

To describe the principal results on the Witten index for a class of non-Fredholm operators in
this paper, we first briefly recall the setup and principal results on Fredholm indices of a particular
model operator in \cite{GLMST11}. We were motivated there by the approach in \cite{RS95} 
where operators of this general form are argued to
provide models for more complex situations. They arise in connection with investigations of the
Maslov index, Morse theory, Floer homology, Sturm oscillation theory, etc.

To introduce the model, let $\{A(t)\}_{t\in\bbR}$ be a family of self-adjoint operators in 
the complex, separable Hilbert space $\cH$, subject to a relative trace class condition 
described in detail in Hypothesis \ref{h2.1}, and denote by $\bsA$ the operator in 
$L^2(\bbR;\cH)$ defined by   
\begin{align}
&(\bsA f)(t) = A(t) f(t) \, \text{ for a.e.\ $t\in\bbR$,}   \no \\
& f \in \dom(\bsA) = \bigg\{g \in L^2(\bbR;\cH) \,\bigg|\,
g(t)\in \dom(A(t)) \text{ for a.e.\ } t\in\bbR;     \lb{1.1} \\
& \quad t \mapsto A(t)g(t) \text{ is (weakly) measurable;} \,  
\int_{\bbR} dt \, \|A(t) g(t)\|_{\cH}^2 < \infty\bigg\}.   \no 
\end{align}
The relative trace class setup employed in \cite{GLMST11} ensures that $A(t)$ has self-adjoint 
limiting operators
\begin{equation} 
  A_+=\lim_{t\to+\infty}A(t), \quad A_-=\lim_{t\to-\infty}A(t)    \lb{1.2}
\end{equation} 
in $\cH$ in the norm resolvent sense (detailed in \eqref{2.6}). Next, we introduce the model 
operator 
\begin{equation}
\bsD_\bsA^{} = \f{d}{dt} + \bsA,
\quad \dom(\bsD_\bsA^{})= \dom(d/dt) \cap \dom(\bsA_-),     \lb{1.5}   \\
\end{equation}
and the associated nonnegative, self-adjoint operators 
\begin{equation}
\bsH_1 = \bsD_\bsA^* \bsD_\bsA^{}, \quad \bsH_2 = \bsD_\bsA^{} \bsD_\bsA^*,   
\lb{1.6}
\end{equation}
in $L^2(\bbR;\cH)$. (Here $\bsA_-$  in $L^2(\bbR;\cH)$ represents the self-adjoint constant 
fiber operator defined according to \eqref{1.1}, with $A(t)$ replaced by the asymptote $A_-$, 
cf.\ \eqref{2.13}.)  

Assuming that $A_-$ and $A_+$ are boundedly invertible, we also recall (cf.\ \cite{GLMST11}) that
$\bsD_\bsA^{}$ is a Fredholm operator in $L^2(\bbR;\cH)$. Moreover, as shown in 
\cite{GLMST11} (and earlier in \cite{Pu08} under a simpler set of hypotheses on the family 
$A(\cdot)$), the Fredholm index of $\bsD_\bsA^{}$ then can be computed as follows,   
\begin{align}
\ind (\bsD_\bsA^{}) & = \xi(0_+; \bsH_2, \bsH_1)    \lb{1.9a} \\
& = \xi(0; A_+, A_-)   \lb{1.10} \\  
& = {\tr}_{\cH}(E_{A_-}((-\infty,0))-E_{A_+}((-\infty,0)))     \lb{1.12} \\ 
& = \pi^{-1} \lim_{\varepsilon \downarrow 0}\Im\big(\ln\big({\det}_{\cH}
\big((A_+ - i \varepsilon I_{\cH})(A_- - i \varepsilon I_{\cH})^{-1}\big)\big)\big).        \lb{1.13}
\end{align}
Here $\{E_{S}(\lambda)\}_{\lambda\in\bbR}$ denotes the family of spectral projections associated 
with the self-adjoint operator $S$, $\tr_{\cH}(\cdot)$ and $\det_{\cH} (\cdot)$ represent the trace 
and Fredholm determinant, and $\xi(\, \cdot \, ; S_2,S_1)$ denotes the spectral shift function for 
the pair of self-adjoint operators $(S_2, S_1)$. Whenever $S_j$, $j=1,2$, are bounded from below, 
we adhere to the normalization
\begin{equation}
\xi(\lambda ; S_2,S_1) = 0 \, \text{ for } \, \lambda < \inf(\sigma(S_1) \cup \sigma(S_2)),  
\end{equation}  
in particular, $\xi(\lambda; \bsH_2, \bsH_1) = 0$, $\lambda < 0$.

Given these preparations, we can now describe the new direction developed in this paper which  
focuses on the model operator $\bsD_\bsA^{}$ in $L^2(\bbR;\cH)$ whenever the latter ceases to 
be Fredholm. However, before turning our attention to the specific model operator 
$\bsD_\bsA^{}$, we digress a bit and first recall the definition of the Witten index as studied 
in \cite{BGGSS87} and \cite{GS88} for a linear, closed, densely defined operator $T$ in 
some complex, separable Hilbert space $\cH$. (Details for the following material 
on $T$ are presented in Section \ref{s3}.) We start with the resolvent regularized  
Witten index, denoted by $W_r(T)$, 
\begin{equation}
W_r(T) = \lim_{\lambda \uparrow 0} (- \lambda) 
\tr_{\cH}\big((T^*T - \lambda I_{\cH})^{-1} - (TT^* - \lambda I_{\cH})^{-1}\big),     \lb{1.13a}
\end{equation} 
whenever the limit exists. The corresponding semigroup (heat kernel) regularized Witten 
index, denoted $W_s(\bsD_\bsA^{})$, is 
analogously defined by 
\begin{equation}
W_s(T) = \lim_{t \uparrow \infty} \tr_{\cH} \big(e^{- t T^*T} - e^{-t TT^*}\big),     \lb{1.13b}
\end{equation}
again, whenever the limit exists. 

If $T$ is Fredholm (and of course the necessary trace class conditions in \eqref{1.13a} 
and \eqref{1.13b} are satisfied), one has consistency with the Fredholm index, $\ind(T)$ of $T$, 
that is,
\begin{equation} 
\ind(T) = W_r(T) = W_s(T). 
\end{equation}

In general (i.e., if $T$ is not Fredholm), $W_r(T)$ is not necessarily 
integer-valued; in fact, one can show that it can take on any prescribed real number. The 
intrinsic value of $W_r(T)$ then lies in its stability properties with respect to additive perturbations,   
analogous to stability properties of the Fredholm index. Indeed, as long as one replaces the 
familiar relative compactness assumption on the perturbation in connection with the 
Fredholm index, by appropriate relative trace class conditions in connection with 
the Witten index, stability of the Witten index was proved in \cite{BGGSS87} and \cite{GS88} 
(see also \cite{CP74}). 

Thus, in cases where $T$ ceases to be Fredholm, the Fredholm index of $T$ is naturally replaced 
by the regularized Witten index $W_r(T)$, respectively, $W_s(T)$, assuming both regularized 
indices coincide.  

Under appropriate spectral assumptions on $T^*T$ and $TT^*$, the following connection 
between Fredholm, respectively, Witten indices and the underlying spectral shift function for the 
pair $(TT^*, T^*T)$,  
\begin{equation}
W_r(T) = W_s(T) = \xi(0_+; TT^*, T^*T)    \lb{1.13c}
\end{equation} 
(cf.\ also \eqref{1.9a} and \eqref{1.15}) goes back to \cite{BGGSS87}, \cite{GS88}, and 
in a somewhat different context to \cite{CP86} and \cite[Ch.\ X]{Mu87}. Incidentally, \eqref{1.13c}  
shows the sought after consistency between $W_r(T)$ and $W_s(T)$. 

Originally, index regularizations such as \eqref{1.13a} were studied in the context of 
supersymmetric quantum mechanics in the physics literature in the 1970's and 1980's, see, 
for instance, \cite{AC84}, \cite{BB84}, \cite{Ca78}, \cite{Hi83}, \cite{HT82}, \cite{NS86}, 
\cite{NT85}, \cite{We79}, \cite{Wi82}. In particular, Callias' paper \cite{Ca78} was very 
influential and inspired a fair number of mathematical investigations. For relevant work we 
refer, for instance, to \cite{An89}, \cite{An90}, \cite{An93}, \cite{BGGSS87}, \cite{BMS88}, 
\cite{BS78}, \cite{Bu92}, \cite{GS88}, \cite{Ko11}, \cite{Mu87}, \cite{Mu88}, \cite{Mu98}, 
and the references therein. The intimate connection between the Fredholm index of 
$\bsD_\bsA^{}$ and the spectral flow associated with the operator path $\{A(t)\}_{t \in \bbR}$ 
is not discussed in this paper, but for relevant results we refer, for 
instance, to \cite{DS94}, \cite{Fl88}, \cite{GLMST11}, \cite{Le05}, \cite{RS95}, and the 
extensive list of references therein.  

At this point we leave the general discussion of the general operator $T$ and return to our 
specific model operator $\bsD_\bsA^{}$ in $L^2(\bbR;\cH)$. Assuming that $0$ is a right 
and a left Lebesgue point of $\xi(\,\cdot\,\, ; A_+, A_-)$ (denoted by $\Lxi(0_+; A_+,A_-)$ 
and $\Lxi(0_-; A_+, A_-)$, respectively), we  will prove in Section \ref{s4} that it then is 
also a right Lebesgue point of $\xi(\,\cdot\,\, ; \bsH_2, \bsH_1)$ (denoted by 
$\Lxi(0_+; \bsH_2, \bsH_1)$). Under this right/left Lebesgue point 
assumption on $\xi(\,\cdot\,\, ; A_+, A_-)$, the analog of \eqref{1.5} and \eqref{1.6} in the 
general case where $\bsD_\bsA^{}$ ceases to be Fredholm, and hence the  principal new 
result of the present paper, then reads as follows, 
\begin{align} 
W_r(\bsD_\bsA^{}) &= W_s(\bsD_\bsA^{})  \lb{1.14} \\ 
& = \Lxi(0_+; \bsH_2, \bsH_1)      \lb{1.15} \\ 
& = [\Lxi(0_+; A_+,A_-) + \Lxi(0_-; A_+, A_-)]/2.    \lb{1.16}
\end{align}

In the special case where $\dim(\cH) < \infty$, we will prove in addition, that the Witten indices 
in \eqref{1.14}--\eqref{1.16} are either integer, or half-integer-valued.

The additional formula \eqref{1.10} in terms of the spectral shift function 
$\xi(\, \cdot \,; A_+,A_-)$ for the pair 
of the pair of boundedly invertible asymptotes $(A_+, A_-)$, and hence in the case where 
$\bsD_\bsA^{}$ is Fredholm, is of fairly recent origin and due to \cite{Pu08} and (under the 
general assumptions in Hypothesis \ref{h2.1}) due to \cite{GLMST11}. Together with the new 
result \eqref{1.16} proven in this paper, we now succeeded to extend this circle of ideas  
also to the non-Fredholm case for the model operator $\bsD_\bsA^{}$. We emphasize that 
no discrete spectrum assumptions are ever made on $A_{\pm}$.

Next, we briefly describe the content of each section. Section 2 provides a succinct summary of 
the results in \cite{GLMST11}, focusing on the case where $\bsD_\bsA^{}$ is a Fredholm operator. 
In this context we note that $\bsD_\bsA^{}$ is Fredholm if and only if $A_-$ and $A_+$ are 
boundedly invertible in $\cH$ (cf.\ Theorem \ref{t2.6}). While the ``if'' part of this statement was 
known (see, \cite{GLMST11}), the ``only if'' part is one of the new results in this paper. The 
resolvent and semigroup 
regularized Witten indices are studied in great detail in Section \ref{s3}. Our principal Section \ref{s4} 
is then devoted to the computation of the Witten index of $\bsD_\bsA^{}$ when the latter ceases to 
be a Fredholm operator under the assumption that $0$ is a right and a left Lebesgue point of 
$\xi(\, \cdot \,; A_+, A_-) $. Section \ref{s5} provides a complete treatment of the particular case 
$\dim(\cH) < \infty$ (no Lebesgue point assumptions are necessary in this case). 
As the Witten index, in the non-Fredholm case, is not invariant under (relatively) compact perturbations it cannot have any connection to K-theory. Our final Section \ref{s6} argues that cyclic homology is the appropriate substitute for K theory for non-Fredholm operators. 
A brief Appendix \ref{sA} collects analytical tools (such as the definition of 
right/left Lebesgue points, basic facts on self-adjoint rank-one perturbations of self-adjoint operators, 
and some results on truncated Hilbert transforms and the Poisson and conjugated Poisson 
operator for the open upper half-plane) used at various places in this paper. Appendix \ref{sB} 
considers more general trace relations than the standard one in \eqref{2.19} and 
establishes the connection with the Abel transform underlying formula \eqref{2.37a}. 
Appendix \ref{sC} offers an alternative proof of the important estimate \eqref{3.42}. 

Finally, we briefly summarize some of the notation used in this paper: 
Let $\cH$ be a separable, complex Hilbert space, $(\cdot,\cdot)_{\cH}$ the scalar product in $\cH$
(linear in the second argument), and $I_{\cH}$ the identity operator in $\cH$.

Next, if $T$ is a linear operator mapping (a subspace of) a Hilbert space into another, then 
$\dom(T)$ and $\ker(T)$ denote the domain and kernel (i.e., null space) of $T$. 
The closure of a closable operator $S$ is denoted by $\ol S$. 
The spectrum, essential spectrum, discrete spectrum, point spectrum (i.e., the set of eigenvalues), 
and resolvent set 
of a closed linear operator in a Hilbert space will be denoted by $\sigma(\cdot)$, 
$\sigma_{ess}(\cdot)$, $\sigma_{d}(\cdot)$, $\sigma_{p}(\cdot)$, and $\rho(\cdot)$, 
respectively. 

The strongly right continuous family of spectral projections of a self-adjoint operator $S$ in $\cH$ will be denoted by $E_S(\lambda)$, $\lambda \in \bbR$. (In 
particular, $E_S(\lambda) = E_S((-\infty,\lambda])$, 
$E_S((-\infty, \lambda)) = \slim_{\varepsilon\downarrow 0} E_S(\lambda - \varepsilon)$, and $E_S((\lambda_1,\lambda_2]) = E_S(\lambda_2) - E_S(\lambda_1)$, 
$\lambda_1 < \lambda_2$, $\lambda, \lambda_1, \lambda_2 \in\bbR$.)

The convergence in the strong operator topology (i.e., pointwise limits) will be denoted by $\slim$. 
Similarly, limits in the norm topology are abbreviated by $\nlim$. 

If $T$ is a Fredholm operator, its Fredholm index is denoted by $\ind(T)$.  

The Banach spaces of bounded and compact linear operators on a separable, complex Hilbert space 
$\cH$ are denoted by $\cB(\cH)$ and $\cB_\infty(\cH)$, respectively; the corresponding 
$\ell^p$-based trace ideals will be denoted by $\cB_p (\cH)$, $p>0$. 
Moreover, ${\det}_{\cH}(I_\cK-A)$, 
and $\tr_{\cH}(A)$ denote the standard Fredholm determinant and the corresponding trace 
of a trace class operator $A\in\cB_1(\cH)$. 

We will use the abbreviation $\bbC_+ = \{z\in\bbC\,|\, \Im(z) > 0\}$ for the open complex upper-half plane.

\section{The Fredholm Index of $\bsD_{\bsA}^{}$ 
and the Spectral Shift Function}  \lb{s2} 

In this section we recall some of the principal results obtained in \cite{GLMST11} 
on the Fredholm index of the model operator $\bsD_\bsA^{}$ (cf.\ \eqref{2.DA}). 
Since in subsequent sections we will go beyond the Fredholm property of 
$\bsD_\bsA^{}$ and aim at computing its Witten index, a summary of the results 
in \cite{GLMST11} will prove to be a useful basis for our point of departure into new territory.

Throughout this section we make the following assumptions: 

\begin{hypothesis} \lb{h2.1}
Suppose $\cH$ is a complex, separable Hilbert space. \\
$(i)$ Assume $A_-$ is self-adjoint on $\dom(A_-) \subseteq \cH$. \\
$(ii)$ Suppose there exists a family of operators $B(t)$, $t\in\bbR$,
closed and symmetric in $\cH$, with $\dom(B(t)) \supseteq \dom(A_-)$, $t\in\bbR$. \\
$(iii)$ Assume there exists a family of operators $B'(t)$,
$t\in\bbR$, closed and symmetric in $\cH$, with 
$\dom(B'(t)) \supseteq \dom(A_-)$, such that the family
$B(t)(|A_-| + I_{\cH})^{-1}$, $t\in\bbR$, is weakly locally absolutely continuous, and for a.e.\ $t\in\bbR$,  
\begin{equation}
\frac{d}{dt} (g,B(t)(|A_-| + I_{\cH})^{-1}h)_{\cH}
=(g,B'(t) (|A_-| + I_{\cH})^{-1}h)_{\cH}, \quad g, h\in\cH.     \lb{2.1}
\end{equation} 
$(iv)$ Assume that $B'(t)(|A_-| + I_{\cH})^{-1}\in\cB_1(\cH)$,  $t\in\bbR$, and
\begin{equation}  \lb{2.2}
\int_\bbR dt \, \big\|B'(t) (|A_-| + I_{\cH})^{-1}\big\|_{\cB_1(\cH)} < \infty.
\end{equation}
$(v)$ Suppose that the families 
\begin{equation}
\big\{\big(|B(t)|^2 + I_{\cH} \big)^{-1}\big\}_{t\in\bbR} \, \text{ and } \, 
\big\{\big(|B'(t)|^2 + I_{\cH} \big)^{-1}\big\}_{t\in\bbR}      \lb{2.3} 
\end{equation}
are weakly measurable $($cf.\ \cite[Definition\ A.3\,$(ii)$]{GLMST11}$)$.
\end{hypothesis}

Assuming Hypothesis \ref{h2.1}, we introduce the family of operators 
$\{A(t)\}_{t\in\bbR}$ in $\cH$ by 
\begin{equation}
A(t) = A_- + B(t), \quad \dom(A(t)) = \dom(A_-), \; t\in\bbR,    \lb{2.4}
\end{equation}
and recall from \cite{GLMST11} that $A(t)$, $t \in \bbR$, are self-adjoint and that 
there exists a self-adjoint operator $A_+$ in $\cH$ such that
\begin{equation}
\dom(A_+) = \dom(A_-)     \lb{2.5}
\end{equation}
and
\begin{align}
& \nlim_{t\to \pm \infty}(A(t) - z I_{\cH})^{-1} = (A_{\pm} - z I_{\cH})^{-1}, \quad z\in\bbC\backslash\bbR,     \lb{2.6} \\
& \big[(A_+ - z I_{\cH})^{-1} - (A_- - z I_{\cH})^{-1}\big] \in \cB_1(\cH), \quad 
z \in \rho(A_+) \cap \rho(A_-).     \lb{2.7} 
\end{align}
In addition, we also introduce
\begin{equation}
B_- = 0, \quad B_+ = \ol{(A_+ - A_-)}, \quad \dom(B_+) \supseteq \dom(A_-), 
\end{equation}
and note that 
\begin{equation}
A_+ = A_- + B_+, \quad \dom(A_+) = \dom(A_-).    \lb{2.A+B+}
\end{equation} 

We also recall the estimate
\begin{equation} 
\big\|B(t) (|A_-| + I_{\cH})^{-1}\big\|_{\cB_1(\cH)}
\leq \int_{-\infty}^t ds \, \big\|B'(s) (|A_-| + I_{\cH})^{-1}\big\|_{\cB_1(\cH)} 
\leq C, \quad t \in \bbR,   \lb{2.7A}
\end{equation} 
for some finite constant $C>0$. 

Next, let $\bsA$ in $L^2(\bbR;\cH)$ be the operator associated with the family 
$\{A(t)\}_{t\in\bbR}$ in $\cH$ by
\begin{align}
&(\bsA f)(t) = A(t) f(t) \, \text{ for a.e.\ $t\in\bbR$,}   \no \\
& f \in \dom(\bsA) = \bigg\{g \in L^2(\bbR;\cH) \,\bigg|\,
g(t)\in \dom(A(t)) \text{ for a.e.\ } t\in\bbR,     \lb{2.bfA} \\
& \quad t \mapsto A(t)g(t) \text{ is (weakly) measurable,} \,  
\int_{\bbR} dt \, \|A(t) g(t)\|_{\cH}^2 < \infty\bigg\}.    \no
\end{align}

To state the principal results of \cite{GLMST11}, we start by introducing in 
$L^2(\bbR;\cH)$ the operator
\begin{equation}
\bsD_\bsA^{} = \f{d}{dt} + \bsA,
\quad \dom(\bsD_\bsA^{})= \dom(d/dt) \cap \dom(\bsA_-).   \lb{2.DA}
\end{equation}
The operator $d/dt$ in $L^2(\bbR;\cH)$  is defined by
\begin{align}
\begin{split}
& \bigg(\f{d}{dt}f\bigg)(t) = f'(t) \, \text{ for a.e.\ $t\in\bbR$,} 
\lb{2.ddt}  \\
& \, f \in \dom(d/dt) = \big\{g \in L^2(\bbR;\cH) \, \big|\,
g \in AC_{\loc}(\bbR; \cH), \, g' \in L^2(\bbR;\cH)\big\},
\end{split}
\end{align}
and we recall that 
\begin{align}
& g \in AC_{\loc}(\bbR; \cH) \, \text{ if and only if $g$ is of the form }        \lb{2.ac} \\
& \quad  g(t) = g(t_0) + \int_{t_0}^t ds \, h(s), \; t, t_0 \in \bbR, \, \text{ for some } \, 
h \in L^1_{\loc}(\bbR;\cH), \text{ and } \, g' = h \, a.e.   \no 
\end{align} 
(The integral in \eqref{2.ac} is of course a Bochner integral.) In addition, 
$\bsA$ is defined in \eqref{2.bfA} ($\bsB$, $\bsA' = \bsB'$ are analogously defined) 
and $\bsA_-$  in $L^2(\bbR;\cH)$ represents 
the self-adjoint (constant fiber) operator defined according to
\begin{align}
&(\bsA_- f)(t) = A_- f(t) \, \text{ for a.e.\ $t\in\bbR$,}   \no \\
& f \in \dom(\bsA_-) = \bigg\{g \in L^2(\bbR;\cH) \,\bigg|\,
g(t)\in \dom(A_-) \text{ for a.e.\ } t\in\bbR,    \no \\
& \quad t \mapsto A_- g(t) \text{ is (weakly) measurable,} \,  
\int_{\bbR} dt \, \|A_- g(t)\|_{\cH}^2 < \infty\bigg\}.    \lb{2.13}
\end{align} 

Assuming Hypothesis \ref{h2.1}, one can show that the operator
$\bsD_\bsA^{}$ is densely defined and closed in $L^2(\bbR; \cH)$. Similarly, 
the adjoint operator $\bsD_\bsA^*$ of $\bsD_\bsA^{}$ in $L^2(\bbR; \cH)$ is then given 
by (cf.\ \cite{GLMST11})
\begin{equation}
\bsD_\bsA^*=- \f{d}{dt} + \bsA, \quad
\dom(\bsD_\bsA^*) = \dom(d/dt) \cap \dom(\bsA_-) = \dom(\bsD_\bsA^{}).    \lb{2.14} 
\end{equation}

Using these operators, we define in $L^2(\bbR;\cH)$ the nonnegative 
self-adjoint operators
\begin{equation} \lb{2.15}
\bsH_1=\bsD_\bsA^* \bsD_\bsA^{},\quad \bsH_2=\bsD_\bsA^{} \bsD_\bsA^*.
\end{equation} 
Finally, let us define the functions
\begin{align} \lb{2.16}
g_z(x) & = x(x^2-z)^{-1/2}, \quad z\in\C\backslash [0,\infty), \; x\in\bbR. 
\end{align}

The following principal result relates the trace of the difference of the 
resolvents of $\bsH_1$ and $\bsH_2$ in $L^2(\bbR;\cH)$, and the trace 
of the difference of $g_z(A_+)$ and $g_z(A_-)$ in $\cH$.

\begin{theorem} [\cite{GLMST11}] \lb{t2.2}
Assume Hypothesis \ref{h2.1} and define the operators
$\bsH_1$ and $\bsH_2$ as in \eqref{2.15} and the function 
$g_z$ as in \eqref{2.16}. Then 
\begin{align}
& \big[(\bsH_2 - z \bsI)^{-1}-(\bsH_1 - z \bsI)^{-1}\big] \in \cB_1\big(L^2(\bbR;\cH)\big),  
\quad z\in\rho(\bsH_1) \cap \rho(\bsH_2),      \lb{2.17}  \\
& [g_z(A_+)-g_z(A_-)] \in \cB_1(\cH),  \quad 
z\in\rho\big(A_+^2\big) \cap \rho\big(A_-^2\big),     \lb{2.18}
\end{align}
and the following trace formula holds,  
\begin{align}
\begin{split}
   \lb{2.19}
   \tr_{L^2(\bbR;\cH)}\big((\bsH_2 - z \bsI)^{-1}-(\bsH_1 - z \, 
\bsI)^{-1}\big) = \frac{1}{2z} \tr_\cH \big(g_z(A_+)-g_z(A_-)\big),&  \\  
z\in\bbC\backslash [0,\infty).&  
\end{split}
\end{align} 
\end{theorem}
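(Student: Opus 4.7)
The plan is to prove the two trace class inclusions \eqref{2.17} and \eqref{2.18} first, and then to derive the trace identity \eqref{2.19}. The basic inputs throughout are Hypothesis \ref{h2.1} together with the relative-trace-class property \eqref{2.7} of the resolvent difference of $A_\pm$, which itself follows from Hypothesis \ref{h2.1} by integrating \eqref{2.1} in $t$ and invoking the bound \eqref{2.7A}.

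For \eqref{2.18}, I would start from the integral representation
\[
g_z(x) \;=\; \frac{x}{(x^2-z)^{1/2}} \;=\; \frac{1}{\pi}\int_0^\infty \frac{x}{x^2+\lambda-z}\,\frac{d\lambda}{\lambda^{1/2}}, \qquad z\in\bbC\setminus[0,\infty),\; x\in\bbR,
\]
insert $A_{\pm}$ via the spectral theorem, and subtract. Each integrand $A_+(A_+^2+\lambda-z)^{-1} - A_-(A_-^2+\lambda-z)^{-1}$ decomposes, through the partial-fraction identity $x/(x^2+\mu)=\tfrac{1}{2}[(x-i\sqrt{\mu})^{-1}+(x+i\sqrt{\mu})^{-1}]$, into a linear combination of resolvent differences of $A_+$ and $A_-$, hence lies in $\cB_1(\cH)$ by \eqref{2.7}, with $\cB_1$-norm integrable in $\lambda$; the Bochner integral then yields \eqref{2.18}. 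For \eqref{2.17} I would apply the second resolvent identity
\[
(\bsH_2-z\bsI)^{-1} - (\bsH_1-z\bsI)^{-1} \;=\; (\bsH_2-z\bsI)^{-1}\,(\bsH_1-\bsH_2)\,(\bsH_1-z\bsI)^{-1},
\]
and verify on an appropriate core the identity $\bsH_2 - \bsH_1 = 2\bsB'$, which follows from the direct computation $\bsH_1 = -d^2/dt^2 + \bsA^2 - \bsA'$ and $\bsH_2 = -d^2/dt^2 + \bsA^2 + \bsA'$ (the non-commutativity of $\bsA$ and $d/dt$ producing $\bsA'$, which equals $\bsB'$ since $\bsA - \bsA_- = \bsB$), rigorously justified via the weak absolute continuity condition in Hypothesis \ref{h2.1}(iii). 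Factoring $\bsB' = [\bsB'(|\bsA_-|+\bsI)^{-1}]\,(|\bsA_-|+\bsI)$, the bracketed factor is a direct integral in $t$ of fiberwise trace class operators whose $\cB_1(\cH)$-norm is integrable by \eqref{2.2}, hence it is trace class on $L^2(\bbR;\cH)$; the remaining $(|\bsA_-|+\bsI)$ is absorbed into the bounded operator $(|\bsA_-|+\bsI)(\bsH_1-z\bsI)^{-1}$, using the inclusion $\dom(\bsH_1)\subseteq\dom(\bsA_-)$.

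The trace identity \eqref{2.19} is where the main work lies. Taking the trace of the resolvent identity, exploiting cyclicity, and using that $\bsB'$ is fiberwise multiplication by $B'(t)$ in $t$, reduces matters to
\[
\tr_{L^2(\bbR;\cH)}\big((\bsH_2-z\bsI)^{-1} - (\bsH_1-z\bsI)^{-1}\big) \;=\; -2\int_\bbR dt\,\tr_\cH\big(B'(t)\,\bsK(z;t,t)\big),
\]
where $\bsK(z;\cdot,\cdot)$ denotes the operator-valued Schwartz kernel of $(\bsH_1-z\bsI)^{-1}(\bsH_2-z\bsI)^{-1}$. My plan is to evaluate the diagonal $\bsK(z;t,t)$ by a frozen-coefficient argument: locally near $t_0$, the constant-coefficient surrogate $-d^2/dt^2 + A(t_0)^2$ is diagonalised by the Fourier transform in $t$, and the $\tau$-integral $(2\pi)^{-1}\int_\bbR (\tau^2+A(t_0)^2-z)^{-2}\,d\tau = \tfrac{1}{4}(A(t_0)^2-z)^{-3/2}$ identifies the leading contribution as $-(4z)^{-1}(g_z)'(A(t))$, where $(g_z)'(x) = -z(x^2-z)^{-3/2}$. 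Combining this with the standard identity $\tr_\cH((g_z)'(A(t))B'(t)) = (d/dt)\tr_\cH g_z(A(t))$ (a consequence of differentiating under the trace and cyclicity) converts the integrand into a total $t$-derivative, whose integration from $-\infty$ to $+\infty$ produces exactly $(2z)^{-1}\tr_\cH(g_z(A_+)-g_z(A_-))$, matching \eqref{2.19}.

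The main obstacle is the rigorous justification of the frozen-coefficient evaluation of $\bsK(z;t,t)$. Since $A(t)$ and $A(s)$ do not commute for $t\neq s$, partial Fourier transform in $t$ does not literally diagonalise $\bsH_1,\bsH_2$, and the commutator errors along the diagonal must be controlled in $\cB_1(\cH)$-norm uniformly in $t$. Although such control is ultimately supplied by Hypothesis \ref{h2.1}(iii)--(iv), threading it through a trace class estimate in $L^2(\bbR;\cH)$ (as opposed to fibered trace-norm bounds) demands careful management of several interlocking integrations.
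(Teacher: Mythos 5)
First, a point of reference: the paper itself contains no proof of Theorem \ref{t2.2} -- it is quoted from \cite{GLMST11} -- so your proposal must be measured against the argument there, and it has genuine gaps at each of its three stages. The most concrete error is in your treatment of \eqref{2.17}: you assert that $\bsB'(|\bsA_-|+\bsI)^{-1}$ is trace class on $L^2(\bbR;\cH)$ because it is a direct integral of fiberwise trace class operators with integrable $\cB_1(\cH)$-norms. This is false. A decomposable (multiplication) operator on $L^2(\bbR;\cH)$ is never compact unless it vanishes a.e.; for instance, multiplication by $\chi_{[0,1]}(t)P$ with $P$ a rank-one orthogonal projection is an infinite-rank projection. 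The integrability \eqref{2.2} only becomes useful \emph{after} the perturbation is sandwiched between the two resolvents: one must factor $\bsB'(|\bsA_-|+\bsI)^{-1}$ into a product of two operator-valued functions of $t$ with square-integrable fiberwise Hilbert--Schmidt norms, distribute these onto $(\bsH_2-z\bsI)^{-1}$ and $(|\bsA_-|+\bsI)(\bsH_1-z\bsI)^{-1}$, and show each resulting factor is Hilbert--Schmidt by exploiting the explicit off-diagonal decay of the kernel of $\big(-d^2/dt^2+\bsA_-^2-z\bsI\big)^{-1}$. This interplay between decay in $t$ and the smoothing of $d^2/dt^2$ is the actual content of the proof of \eqref{2.17} in \cite{GLMST11} and is absent from your sketch.

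Your elementary route to \eqref{2.18} also does not close. With $g_z(A_+)-g_z(A_-)=\pi^{-1}\int_0^\infty\lambda^{-1/2}\big[A_+(A_+^2+\lambda-z)^{-1}-A_-(A_-^2+\lambda-z)^{-1}\big]\,d\lambda$ and only the \emph{relative} trace class information \eqref{2.7}, the best available trace-norm bound on the integrand in brackets is $O\big(\lambda^{-1/2}\big)$ as $\lambda\to\infty$ (one factor of $(|A_-|+I_\cH)$ must be spent to absorb $A_+-A_-$), so the $\lambda$-integral is logarithmically divergent; nor is there cancellation between the two partial fractions, since to leading order both contribute $-\mu^{-1}(A_+-A_-)$ and $A_+-A_-$ itself need not lie in $\cB_1(\cH)$. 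The inclusion \eqref{2.18} is in fact a deep result that is proved via double operator integral techniques (\cite{PS09}); the present paper states explicitly in Example \ref{eB.6}\,$(i)$ that the required inclusion ``to date, can only be verified by DOI methods.'' Finally, your frozen-coefficient evaluation for \eqref{2.19} produces the correct constant and is in the right spirit (the integrand does become a total $t$-derivative telescoping to the asymptotes), but, as you concede, the error control is missing; the rigorous proof in \cite{Pu08} and \cite{GLMST11} instead organizes the computation around the supersymmetric identity expressing $z\big[(\bsH_2-z\bsI)^{-1}-(\bsH_1-z\bsI)^{-1}\big]$ as a commutator of $\bsD_\bsA^{}$ with $\bsD_\bsA^*(\bsH_2-z\bsI)^{-1}$, whose trace reduces to boundary values at $t\to\pm\infty$ of the diagonal resolvent kernel, and the justification proceeds through a substantial approximation scheme rather than a parametrix for $(\bsH_1-z\bsI)^{-1}(\bsH_2-z\bsI)^{-1}$.
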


Next, one also needs to introduce the spectral shift function 
$\xi(\,\cdot\,; \bsH_2,\bsH_1)$ associated with the pair $(\bsH_2, \bsH_1)$. 
Since $\bsH_2\geq 0$ and $\bsH_1\geq 0$, and 
\begin{equation}
\big[(\bsH_2 + \bsI)^{-1} - (\bsH_1 + \bsI)^{-1}\big] \in \cB_1 
\big(L^2(\bbR;\cH)\big),
\end{equation}
by Theorem \ref{t2.2}, one uniquely introduces $\xi(\,\cdot\,; \bsH_2,\bsH_1)$ by requiring that
\begin{equation}
\xi(\lambda; \bsH_2,\bsH_1) = 0, \quad \lambda < 0,    \lb{2.27}
\end{equation}
and by 
\begin{equation}
\tr_{L^2(\bbR;\cH)} \big((\bsH_2 - z \bsI)^{-1} - (\bsH_1 - z \, 
\bsI)^{-1}\big)
= - \int_{[0, \infty)}  \frac{\xi(\lambda; \bsH_2, \bsH_1) \, 
d\lambda}{(\lambda -z)^2}, \quad z\in\bbC\backslash [0,\infty), 
\lb{2.28}
\end{equation}
following \cite[Sect.\ 8.9]{Ya92}. 

\begin{lemma} [\cite{GLMST11}] \lb{l2.3}
Assume Hypothesis \ref{h2.1}. Then 
\begin{equation}
\xi(\,\cdot\,; \bsH_2, \bsH_1) \in L^1\big(\bbR; (|\lambda| + 1)^{-1} 
d\lambda\big)   \lb{2.30}
\end{equation}
and 
\begin{equation}
\xi(\lambda; \bsH_2, \bsH_1) = \pi^{-1} \lim_{\varepsilon \downarrow 0} 
\Im\big(\ln\big(\wti D_{\bsH_2/\bsH_1}(\lambda + i \varepsilon)\big)\big) 
\, \text{ for a.e.\ } \, \lambda \in \bbR,   
\end{equation}
where we used the abbreviation 
\begin{align}
& \wti D_{\bsH_2/\bsH_1} (z) = {\det}_{L^2(\bbR;\cH)} 
\big((\bsH_1 - z \bsI)^{-1/2} (\bsH_2 - z \bsI) (\bsH_1 - z \bsI)^{-1/2}\big)  \no \\
& \quad = {\det}_{L^2(\bbR;\cH)} 
\big(\bsI + 2 (\bsH_1 - z \bsI)^{-1/2} \bsA' (\bsH_1 - z \bsI)^{-1/2}\big), 
\quad z \in \rho(\bsH_1).   
\end{align}
\end{lemma}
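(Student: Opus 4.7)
The plan is to recast $(\bsH_1, \bsH_2)$ as a symmetrized trace class perturbation pair and then apply the Birman--Krein--Yafaev theory of perturbation determinants in the form presented in \cite[Sect.\ 8.9]{Ya92}; this simultaneously delivers the weighted $L^1$-integrability \eqref{2.30} and the log-determinant representation of $\xi(\,\cdot\,; \bsH_2,\bsH_1)$ claimed by the lemma.

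First, on a suitable core of smooth compactly supported $\cH$-valued functions taking values in $\dom(A_-)$, I would establish the algebraic identity $\bsH_2 - \bsH_1 = 2\bsA'$, where $\bsA'$ acts fiberwise as $(\bsA' f)(t) = B'(t) f(t)$. This is a short direct computation using $\bsD_{\bsA}^{} = (d/dt) + \bsA$, $\bsD_{\bsA}^{*} = -(d/dt) + \bsA$, and the fiberwise commutator $[d/dt, \bsA] = \bsA'$. Combining this with the resolvent algebra then yields
\begin{align}
&(\bsH_1 - z \bsI)^{-1/2}(\bsH_2 - z \bsI)(\bsH_1 - z \bsI)^{-1/2} \no \\
&\quad = \bsI + 2(\bsH_1 - z \bsI)^{-1/2} \bsA' (\bsH_1 - z \bsI)^{-1/2}, \quad z \in \rho(\bsH_1),
\end{align}
which is exactly the second equality in the display defining $\wti{D}_{\bsH_2/\bsH_1}(z)$.

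The central analytic step is to verify
\begin{equation}
(\bsH_1 - z \bsI)^{-1/2} \bsA' (\bsH_1 - z \bsI)^{-1/2} \in \cB_1\big(L^2(\bbR;\cH)\big), \quad z \in \rho(\bsH_1),
\end{equation}
which both justifies the Fredholm determinant and reduces $\wti{D}_{\bsH_2/\bsH_1}(\,\cdot\,)$ to the standard ``identity plus trace class'' setting. The key inputs are Hypothesis \ref{h2.1}\,(iv), namely the pointwise trace class bound $B'(t)(|A_-| + I_\cH)^{-1} \in \cB_1(\cH)$ together with the integrability \eqref{2.2}, combined with the form inequality allowing $(\bsH_1 + \bsI)^{-1/2}$ to dominate $(|\bsA_-| + I_\cH)^{-1}$ on each $t$-fiber. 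A Fubini-type argument in the $t$-variable then upgrades the fiberwise trace estimate to the integrated trace-norm bound on $L^2(\bbR;\cH)$.

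With the trace class property in hand, I would apply the Birman--Krein--Yafaev theory (\cite[Ch.\ 8]{Ya92}) to the symmetrized perturbation determinant $\wti{D}_{\bsH_2/\bsH_1}$, furnishing both the weighted integrability \eqref{2.30} and the boundary-value formula for $\xi(\,\cdot\,;\bsH_2,\bsH_1)$; the normalization \eqref{2.27} is ensured by the nonnegativity of $\bsH_1, \bsH_2$ together with the analyticity of $\wti{D}_{\bsH_2/\bsH_1}(\,\cdot\,)$ in $\rho(\bsH_1) \supseteq \bbC \setminus [0,\infty)$. The main obstacle will be the trace class property of the symmetrized perturbation in the $L^2(\bbR;\cH)$ setting, since one must simultaneously control the derivative direction and the $t$-parameterized fiber operators, marrying the pointwise estimate \eqref{2.2} with the resolvent of the non-fiber-diagonal operator $\bsH_1$; however, much of this reduces to techniques already developed in \cite{GLMST11} for the proof of Theorem \ref{t2.2}.
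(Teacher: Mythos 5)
Your strategy is essentially the one of \cite{GLMST11}, which is the source this paper cites for Lemma \ref{l2.3} without reproducing the argument: the operator identity $\bsH_2 - \bsH_1 = 2\bsA'$ obtained from the commutator $[d/dt,\bsA]=\bsA'$, the resulting symmetrized perturbation determinant, the trace class property of $(\bsH_1 - z\bsI)^{-1/2}\bsA'(\bsH_1 - z\bsI)^{-1/2}$, and then the Birman--Krein--Yafaev machinery of \cite[Sect.\ 8.9]{Ya92}, which indeed delivers both the weight $(|\lambda|+1)^{-1}$ in \eqref{2.30} and the boundary-value representation of $\xi(\,\cdot\,;\bsH_2,\bsH_1)$ via $\wti D_{\bsH_2/\bsH_1}$.

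The one step that would fail as written is the assertion that ``a Fubini-type argument in the $t$-variable upgrades the fiberwise trace estimate to the integrated trace-norm bound on $L^2(\bbR;\cH)$.'' A fiber-diagonal (multiplication) operator in $t$ whose fibers are trace class with integrable trace norms need not even be compact on $L^2(\bbR;\cH)$; a constant rank-one fiber already gives a counterexample. The trace class property of $(\bsH_1 - z\bsI)^{-1/2}\bsA'(\bsH_1 - z\bsI)^{-1/2}$ genuinely requires exploiting the fact that the resolvent of $\bsH_1$ is \emph{not} fiber-diagonal: in \cite{GLMST11} one compares with $\bsH_0 = -d^2/dt^2 + \bsA_-^2$, whose resolvent has the explicit kernel $\tfrac12\big(A_-^2 - z\big)^{-1/2}e^{-|t-s|(A_-^2 - z)^{1/2}}$, and it is the off-diagonal decay of this kernel in $t-s$ that converts the $L^1$-in-$t$ hypothesis \eqref{2.2} into a trace-norm bound on $L^2(\bbR;\cH)$ (essentially a Hilbert--Schmidt times Hilbert--Schmidt factorization across the $t$-variable), after which one passes from $\bsH_0$ to $\bsH_1$ by resolvent identities. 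You acknowledge this difficulty in your closing sentence, but the substance of the proof lives precisely there; the fiberwise estimate plus Fubini alone does not suffice. Modulo that step, the reduction to \cite[Sect.\ 8.9]{Ya92} is the correct and standard conclusion.
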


Next, we will introduce the spectral shift function associated with the pair 
$(A_+, A_-)$ via the invariance principle: By \eqref{2.18}, and the fact that 
$g_{-1}(A_{\pm})$ are self-adjoint (cf.\ \eqref{2.16} for the definition of $g$), Krein's 
trace formula in its simplest form yields (cf.\ \cite[Theorem\ 8.2.1]{Ya92})
\begin{equation}\lb{2.20}
\tr_{\cH}\big(g_{-1}(A_+) - g_{-1}(A_-)\big)
= \int_{[-1,1]} \xi(\omega; g_{-1}(A_+), g_{-1}(A_-)) \, d\omega.
\end{equation}
Next, we define 
(cf.\ also \cite[eq.\ 8.11.4]{Ya92})
\begin{equation}
\xi(\nu; A_+, A_-) := \xi(g_{-1} (\nu); g_{-1}(A_+), 
g{-1}(A_-)), \quad \nu\in\bbR,  \lb{2.21}
\end{equation}
 where
$\xi(\,\cdot\,;g_{-1}(A_+), g_{-1}(A_-))$ is the spectral shift function associated with the pair
$(g_{-1}(A_+), g_{-1}(A_-))$ uniquely determined by the requirement 
(cf.\ \cite[Sects.\ 9.1, 9.2]{Ya92})
\begin{equation}
\xi(\,\cdot\,;g_{-1}(A_+), g_{-1}(A_-)) \in L^1(\bbR; d\omega).  \lb{2.22}
\end{equation} 
This then implies
\begin{equation}
\tr_{\cH}\big(g_{z}(A_+) - g_{z}(A_-)\big)
  = - z \int_{\bbR} \frac{\xi(\nu; A_+, A_-) \, d\nu}{(\nu^2 - z)^{3/2}},
\quad  z\in\bbC\backslash [0,\infty).         \lb{2.24}
\end{equation}

It remains to detail the precise connection between $\xi$ and Fredholm 
perturbation  determinants associated with the pair $(A_-, A_+)$. Let 
\begin{equation}
D_{T/S}(z) = {\det}_{\cH} ((T-z I_{\cH})(S- z I_{\cH})^{-1}) = {\det}_{\cH}(I+(T-S)(S-z I_{\cH})^{-1}), 
\quad z \in \rho(S), 
\end{equation}
denote the perturbation determinant for the pair of operators $(S,T)$ in $\cH$, 
assuming $(T-S)(S-z_0)^{-1} \in \cB_1(\cH)$ for some (and hence for all) 
$z_0 \in \rho(S)$. 

\begin{theorem} [\cite{GLMST11}] \lb{t2.4} 
Assume Hypothesis \ref{h2.1}, then
\begin{equation} 
\xi(\lambda; A_+,A_-) = \pi^{-1}\lim_{\e\downarrow 0} 
\Im(\ln(D_{A_+/A_-}(\lambda+i\e))) \, 
\text{ for a.e.\ } \, \lambda\in\bbR,    \lb{2.34}
\end{equation} 
where we make the choice of branch of $\ln(D_{A_+/A_-}(\cdot))$ on $\bbC_+$ such that 
\begin{equation}
\lim_{\Im(z) \to +\infty}\ln(D_{A_+/A_-}(z))=0.
\end{equation} 
Suppose in addition that $0\in\rho(A_-)\cap\rho(A_+)$, then for a continuous representative of  $\xi(\,\cdot\,; A_+,A_-)$ in a neighborhood of $\lambda = 0$ the equality 
\begin{equation} 
\xi(0; A_+,A_-)= \pi^{-1} \lim_{\varepsilon \downarrow 0}
\Im(\ln(D_{A_+/A_-}(i\e)))   \lb{2.35}
\end{equation} 
holds.
\end{theorem}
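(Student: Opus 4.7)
The plan is to reduce both claims to the Stieltjes/Poisson-integral representation
\[
\ln(D_{A_+/A_-}(z)) = \int_{\bbR} \xi(\lambda; A_+,A_-)(\lambda-z)^{-1}\, d\lambda, \quad z\in\bbC_+,
\]
from which \eqref{2.34} is the classical Fatou boundary-value theorem for Poisson integrals and \eqref{2.35} is a continuity statement at $\lambda=0$.

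First I would check that the perturbation determinant $D_{A_+/A_-}(z)$ is actually defined. Letting $t\to\infty$ in the estimate \eqref{2.7A} shows $B_+(|A_-|+I_\cH)^{-1}\in\cB_1(\cH)$, and the boundedness of $(|A_-|+I_\cH)(A_- - z I_\cH)^{-1}$ then gives $B_+(A_- - z I_\cH)^{-1} \in \cB_1(\cH)$ for every $z\in\rho(A_-)$. Thus $D_{A_+/A_-}(z)$ is holomorphic on $\rho(A_-)$, and the familiar identity
\[
\tfrac{d}{dz}\ln(D_{A_+/A_-}(z)) = \tr_\cH\big((A_- - z I_\cH)^{-1} - (A_+ - z I_\cH)^{-1}\big)
\]
holds on $\rho(A_-)\cap\rho(A_+)$.

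The main work is identifying the right-hand side with a Stieltjes transform of $\xi(\,\cdot\,;A_+,A_-)$. Since $[g_{-1}(A_+)-g_{-1}(A_-)]\in\cB_1(\cH)$, the $L^1$ spectral shift function $\xi(\,\cdot\,;g_{-1}(A_+),g_{-1}(A_-))$ obeys the Krein trace formula $\tr_\cH(F(g_{-1}(A_+))-F(g_{-1}(A_-)))=\int F'(\omega)\xi(\omega;g_{-1}(A_+),g_{-1}(A_-))\,d\omega$ for a broad class of $F$. Applying this to $F = f\circ g_{-1}^{-1}$ and changing variables $\omega=g_{-1}(\nu)$ converts it (using the chain rule and the invariance-principle definition \eqref{2.21}) into
\[
\tr_\cH(f(A_+)-f(A_-)) = \int_{\bbR} f'(\nu)\,\xi(\nu;A_+,A_-)\,d\nu.
\]
Specializing to $f(\nu)=(\nu-z)^{-1}$ and combining with the logarithmic-derivative identity yields $\tfrac{d}{dz}\ln(D_{A_+/A_-}(z)) = \tfrac{d}{dz}\int(\lambda-z)^{-1}\xi(\lambda;A_+,A_-)\,d\lambda$. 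Integration and the normalization $\ln(D_{A_+/A_-}(z))\to 0$ as $\Im(z)\to\infty$ (via dominated convergence in the integral) fix the constant of integration at zero, establishing the displayed integral representation. Taking imaginary parts turns $\Im[(\lambda-z)^{-1}]$ at $z=\lambda_0+i\e$ into the Poisson kernel, and Fatou's theorem gives \eqref{2.34} at every Lebesgue point of $\xi(\,\cdot\,;A_+,A_-)$.

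For \eqref{2.35}, the hypothesis $0\in\rho(A_-)\cap\rho(A_+)$ forces $0\in\rho(g_{-1}(A_\pm))$, so there is a common spectral gap around $0$; on that gap the $L^1$ SSF $\xi(\,\cdot\,;g_{-1}(A_+),g_{-1}(A_-))$ admits a locally constant (hence continuous) representative, and the invariance principle transports this to continuity of $\xi(\,\cdot\,;A_+,A_-)$ near $\nu=0$. Simultaneously $D_{A_+/A_-}(z)$ extends holomorphically across $z=0$ with nonzero value, so $\Im\ln(D_{A_+/A_-}(i\e))$ converges as $\e\downarrow 0$. Inserting $\lambda_0=0$ in the Poisson-integral identity and using continuity of $\xi$ at $0$ delivers \eqref{2.35}.

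The main obstacle I expect is the change-of-variables step in the Krein trace formula: setting $F=f\circ g_{-1}^{-1}$ for $f(\nu)=(\nu-z)^{-1}$ produces an $F$ on $(-1,1)$ whose derivative behaves like $(1-\omega^2)^{-1/2}$ near the endpoints, so despite $\xi(\,\cdot\,;g_{-1}(A_+),g_{-1}(A_-))\in L^1$, the integral $\int F'(\omega)\xi(\omega)\,d\omega$ is not absolutely convergent for free. One resolves this either by approximating $f$ by truncated resolvents (using the stability of the trace under the relative trace class structure from Hypothesis \ref{h2.1}) and invoking dominated convergence, or by working directly with the known trace formula \eqref{2.24} for $g_z(A_\pm)$ and then converting to resolvents via the functional calculus identity relating $g_z(\cdot)$ and $(\,\cdot\,-\sqrt{z})^{-1}-(\,\cdot\,+\sqrt{z})^{-1}$.
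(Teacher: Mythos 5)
You should first be aware that the paper itself contains no proof of Theorem \ref{t2.4}: the result is quoted from \cite{GLMST11}, and the only argument supplied in the text is the remark following the theorem, namely that the hypothesis $0\in\rho(A_-)\cap\rho(A_+)$ imposed in \cite[Theorem~7.6]{GLMST11} for \eqref{2.34} can be dropped by factoring $(A_+-A_-)(A_--zI_{\cH})^{-1}$ through $(|A_-|+I_{\cH})^{-1}$ rather than through $A_-^{-1}$. Your overall architecture --- represent $\ln(D_{A_+/A_-})$ as a Cauchy-type transform of $\xi(\,\cdot\,;A_+,A_-)$, take imaginary parts to obtain a Poisson integral, apply Fatou's theorem at Lebesgue points, and then use the spectral gap at $0$ for \eqref{2.35} --- is exactly the route of the cited proof, and your argument for \eqref{2.35} (locally constant representative on the gap, $D_{A_+/A_-}$ holomorphic and nonvanishing at $0$ since $I_{\cH}+(A_+-A_-)A_-^{-1}=A_+A_-^{-1}$ is boundedly invertible) is sound once \eqref{2.34} is in hand.

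The genuine gap is in the step ``integration and the normalization \dots fix the constant of integration at zero, establishing the displayed integral representation.'' Absolute convergence of $\int_{\bbR}\xi(\lambda;A_+,A_-)(\lambda-z)^{-1}\,d\lambda$ requires $\xi(\,\cdot\,;A_+,A_-)\in L^1\big(\bbR;(1+|\lambda|)^{-1}d\lambda\big)$, and nothing available here gives that: the invariance-principle construction \eqref{2.21}--\eqref{2.22} only yields integrability against $(1+\nu^2)^{-3/2}d\nu$ (this is what makes \eqref{2.24} converge), and resolvent comparability upgrades this only to $(1+|\nu|)^{-2}d\nu$ as in \eqref{B.36}. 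Hence the antiderivative of \eqref{2.36} must be taken in a subtracted form such as $\int_{\bbR}\xi(\lambda)\big[(\lambda-z)^{-1}-\lambda(1+\lambda^2)^{-1}\big]d\lambda+C$, and one must then separately show $\Im(C)=0$; this does not follow from dominated convergence, since the Poisson integral of a function that is merely in $L^1\big(\bbR;(1+\lambda^2)^{-1}d\lambda\big)$ need not tend to $0$ as $\Im(z)\to+\infty$ (take $\xi\equiv1$). This is the very obstruction you flag at the end in the guise of the $(1-\omega^2)^{-1/2}$ singularity of $F'$ after the substitution $\omega=g_{-1}(\nu)$ --- the endpoint singularity at $\omega=\pm1$ is precisely the slow decay of $\xi(\nu;A_+,A_-)$ as $\nu\to\pm\infty$ --- but neither of your proposed remedies closes it: truncation plus dominated convergence still needs an integrable majorant that is unavailable, and \eqref{2.24} carries the kernel $(\nu^2-z)^{-3/2}$, which does not convert to $(\lambda-z)^{-2}$ by the partial-fraction identity you invoke (that identity produces $(x^2-z)^{-1}$, not $(x^2-z)^{-1/2}$). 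The fix is a standard one-subtraction Nevanlinna-type representation together with $\|(A_+-A_-)(A_--iyI_{\cH})^{-1}\|_{\cB_1(\cH)}\to0$ as $y\to\infty$ to pin down the constant, but as written the representation formula on which both \eqref{2.34} and \eqref{2.35} rest is not established.
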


We note that the representation \eqref{2.34} for $\xi(\,\cdot\,; A_+,A_-)$ was proved in \cite[Theorem\ 7.6]{GLMST11} only under the additional hypothesis that $0\in\rho(A_-)\cap\rho(A_+)$, by appealing to the decomposition 
\cite[eq.\ (7.50)]{GLMST11}, involving $A_-^{-1}$. But this additional assumption is clearly unnecessary by using the corresponding decomposition
\begin{align}
\begin{split} 
& (A_+ - A_-) (A_- - z I_{\cH})^{-1}     \\ 
& \quad = [(A_+ - A_-) (|A_-| + I_{\cH})^{-1}]
[(|A_-| + I_{\cH})(A_- - z I_{\cH})^{-1}],   \quad z \in \bbC\backslash\bbR, 
\end{split} 
\end{align} 
instead.

In addition, we recall that,   
\begin{align} 
\begin{split} 
\frac{d}{dz}\ln(D_{A_+/A_-}(z)) &= - {\tr}_{\cH}\big((A_+ - z I_{\cH})^{-1}-(A_- - z I_{\cH})^{-1}\big)   \\
& = \int_{\bbR}\frac{\xi(\lambda; A_+, A_-)\,d\lambda}{(\lambda-z)^2},    \quad 
z \in \bbC\backslash\bbR,     \lb{2.36}  
\end{split} 
\end{align} 
the trace formula associated with the pair $(A_+, A_-)$.  

Given these preparations, one obtains Pushnitski's formula \cite{Pu08} relating the spectral 
shift functions $\xi(\cdot \, ; \bsH_2, \bsH_1)$ and 
$\xi(\cdot \, ; A_+,A_-)$ under our general assumptions Hypothesis \ref{h2.1}: 

\begin{theorem} [\cite{GLMST11}] \lb{t2.5}
Assume Hypothesis \ref{h2.1} and define the spectral shift function 
$\xi(\,\cdot\,; \bsH_2,\bsH_1)$ according to \eqref{2.27} and \eqref{2.28}.
Then one has for a.e.\ $\lambda>0$,
\begin{equation} 
\xi(\lambda; \bsH_2, \bsH_1)=\frac{1}{\pi}\int_{-\lambda^{1/2}}^{\lambda^{1/2}}
\frac{\xi(\nu; A_+,A_-) \, d\nu}{(\lambda-\nu^2)^{1/2}},   \lb{2.37a}
\end{equation} 
with a convergent Lebesgue integral on the right-hand side of \eqref{2.37a}. 
\end{theorem}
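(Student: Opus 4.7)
The plan is to identify both $\xi(\,\cdot\,; \bsH_2,\bsH_1)$ and the right-hand side of \eqref{2.37a} by matching their Stieltjes-type transforms as functions of $z \in \bbC \setminus [0,\infty)$, and then invoke uniqueness. First I would combine the trace formula \eqref{2.19} with the representation \eqref{2.24}, which gives
\begin{equation*}
\tr_{L^2(\bbR;\cH)}\big((\bsH_2 - z \bsI)^{-1}-(\bsH_1 - z \bsI)^{-1}\big)
= -\,\frac{1}{2}\int_\bbR \frac{\xi(\nu; A_+, A_-)\,d\nu}{(\nu^2 - z)^{3/2}}, \quad z \in \bbC \setminus [0,\infty).
\end{equation*}
On the other hand, the defining relation \eqref{2.28} expresses the same trace as
$-\int_{[0,\infty)} \xi(\lambda; \bsH_2, \bsH_1) (\lambda - z)^{-2} d\lambda$. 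Equating these, the task reduces to showing that the function
\begin{equation*}
\eta(\lambda) := \frac{1}{\pi}\int_{-\lambda^{1/2}}^{\lambda^{1/2}} \frac{\xi(\nu; A_+, A_-)\,d\nu}{(\lambda - \nu^2)^{1/2}}, \quad \lambda > 0, \qquad \eta(\lambda) := 0, \quad \lambda \le 0,
\end{equation*}
produces the same Stieltjes transform, i.e. that $\int_{[0,\infty)} \eta(\lambda)(\lambda-z)^{-2}d\lambda = \tfrac{1}{2}\int_\bbR \xi(\nu; A_+, A_-)(\nu^2-z)^{-3/2}\,d\nu$.

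Next, I would apply Fubini to swap the order of integration in the expression for $\int_{[0,\infty)} \eta(\lambda)(\lambda-z)^{-2}d\lambda$. After swapping, the inner integral becomes
\begin{equation*}
I(\nu, z) = \int_{\nu^2}^\infty \frac{d\lambda}{(\lambda - z)^2 (\lambda - \nu^2)^{1/2}},
\end{equation*}
which I would evaluate by substituting $\lambda = \nu^2 + u^2$ and using the standard identity $\int_0^\infty (u^2 + w)^{-2}\,du = \pi/(4 w^{3/2})$ (with the principal branch of $w^{1/2}$ having positive real part on $\bbC \setminus (-\infty, 0]$) to obtain $I(\nu,z) = \pi/[2(\nu^2 - z)^{3/2}]$. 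Substituting back yields precisely $\tfrac{1}{2}\int_\bbR \xi(\nu; A_+, A_-)(\nu^2-z)^{-3/2}\,d\nu$, as required.

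The two functions on $[0,\infty)$ then have identical Stieltjes transforms $\int_{[0,\infty)} \cdot\, (\lambda-z)^{-2}\,d\lambda$ off $[0,\infty)$, and by the standard Stieltjes inversion argument (equivalently, the uniqueness statement for absolutely integrable densities against $(1+\lambda)^{-1}$, available thanks to \eqref{2.30}) this forces $\xi(\lambda; \bsH_2, \bsH_1) = \eta(\lambda)$ for a.e.\ $\lambda > 0$. Finally, the pointwise a.e. convergence of the Lebesgue integral in \eqref{2.37a} follows from the local integrability of $(\lambda - \nu^2)^{-1/2}$ at the endpoints together with the weighted $L^1$ bound on $\xi(\,\cdot\,; A_+, A_-)$ implicit in \eqref{2.21}--\eqref{2.22}.

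The main obstacle is the rigorous justification of Fubini's theorem: the density $\xi(\,\cdot\,; A_+, A_-)$ is not assumed to lie in $L^1(\bbR; d\nu)$, only in $L^1(\bbR; (1+\nu^2)^{-3/2} d\nu)$ via the change of variable $\omega = g_{-1}(\nu)$ and \eqref{2.22}. I would therefore need to verify that, for $z \in \bbC \setminus [0,\infty)$, the double integral converges absolutely against this weighted measure. Concretely, combining the bound $|(\lambda-z)^{-2}(\lambda-\nu^2)^{-1/2}|$ with the weight $(1+\nu^2)^{-3/2}$ and splitting the region $\{|\nu| \le R\}$ from $\{|\nu| > R\}$ should handle the tail, while a local argument near the diagonal $\lambda = \nu^2$ handles the singularity.
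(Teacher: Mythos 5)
Your argument is correct and is essentially the proof of Pushnitski's formula given in the cited source \cite{GLMST11} (following \cite{Pu08}); the present paper only quotes the result, but the route there is exactly this matching of the two representations \eqref{2.19}--\eqref{2.24} and \eqref{2.28} of the resolvent-difference trace, followed by uniqueness of the Stieltjes transform. Your Fubini worry resolves cleanly: the substitution $\lambda=\nu^2+u^2$ bounds the inner integral $\int_{\nu^2}^\infty |\lambda-z|^{-2}(\lambda-\nu^2)^{-1/2}\,d\lambda$ by $C_z(1+\nu^2)^{-3/2}$, which pairs with the weighted $L^1$ bound on $\xi(\,\cdot\,;A_+,A_-)$ coming from \eqref{2.21}--\eqref{2.22} and simultaneously yields the a.e.\ convergence of the integral in \eqref{2.37a} via Tonelli.
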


Next, we turn to the connection between the spectral shift function and the Fredholm 
index of $\bsD_\bsA^{}$. (We recall that $\bsD_\bsA^{}$ is densely defined and closed in 
$L^2(\bbR;\cH)$, cf.\ \cite{GLMST11}.) 

First, we state a new criterion for the Fredholm 
property of $\bsD_\bsA^{}$. The following result is a generalization of 
\cite[Propositions\ 7.11, 7.12, Corollary\ 7.13]{LT05} and 
\cite[Theorem\ 1.2]{LP08} (in the context where $\bsA$ is self-adjoint). It shows, in 
particular, that the spectral assumptions 
in \cite[Theorem 3.12]{RS95} (see also \cite{Ra04}), as well as similar assumptions in related 
papers, are, in fact, necessary. 

If $T$ is a densely defined and closed operator in $\cH$, one recalls that
\begin{align}
& \text{$T$ is said to be upper-semi-Fredholm if $\ran(T)$ is closed}   \no \\
& \quad \text{and $\dim(\ker(T)) < \infty$,} \\
& \text{$T$ is said to be lower-semi-Fredholm if $\ran(T)$ is closed}   \no \\
& \quad \text{and $\dim(\ker(T^*)) < \infty$,} \\
& \text{$T$ is said to be Fredholm if $\ran(T)$ is closed}   \no \\
& \quad \text{and $\dim(\ker(T)) + \dim(\ker(T^*)) < \infty$.}
\end{align}
Equivalently, $T$ is Fredholm if $\dim(\ker(T)) + {\rm codim}(\ran(T)) < \infty$. 

\begin{theorem} \lb{t2.6}
Assume Hypothesis \ref{h2.1}. Then $\bsD_\bsA^{}$ Fredholm if and only if 
$0 \in \rho(A_+)\cap\rho(A_-)$. More precisely, if $0 \in \sigma(A_+)$ $($or $0 \in \sigma(A_-)$$)$, 
then $\bsD_\bsA^{}$ is neither upper-semi-Fredholm, nor lower-semi-Fredholm.
\end{theorem}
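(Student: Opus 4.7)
The ``if'' direction is the content of the Fredholm index results recalled from \cite{GLMST11} in the introduction. For the ``only if'' part, I argue the contrapositive: assuming $0\in\sigma(A_+)$ (the case $0\in\sigma(A_-)$ being treated symmetrically), I plan to construct an orthonormal sequence $\{f_n\}\subset\dom(\bsD_\bsA^{})$ such that $\bsD_\bsA^{} f_n\to 0$ \emph{and} $\bsD_\bsA^* f_n\to 0$ in $L^2(\bbR;\cH)$. This simultaneously precludes $\bsD_\bsA^{}$ from being upper-semi-Fredholm or lower-semi-Fredholm, because $\bsD_\bsA^*=-d/dt+\bsA$ by \eqref{2.14} and $\bsD_\bsA^{}$ is lower-semi-Fredholm iff $\bsD_\bsA^*$ is upper-semi-Fredholm.

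The construction is a Weyl-sequence localization. Since $A_+$ is self-adjoint with $0\in\sigma(A_+)$, the spectral projection $E_{A_+}((-1/n,1/n))$ is nonzero for every $n\ge 1$, and any unit vector $g_n$ in its range satisfies $g_n\in\dom(A_+)$ and $\|A_+ g_n\|_\cH\le 1/n$. Since $\dom(A_+)=\dom(A_-)$ by \eqref{2.5}, the closed graph theorem furnishes a constant $C_1>0$ with $\|(|A_-|+I_\cH)g\|_\cH\le C_1\|(|A_+|+I_\cH)g\|_\cH$ for all $g\in\dom(A_-)$, yielding the uniform bound $\sup_n\|(|A_-|+I_\cH)g_n\|_\cH\le 2C_1$. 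Next I fix $\phi\in C_c^\infty(\bbR)$ with $\|\phi\|_{L^2(\bbR)}=1$ and choose scales $L_n\to\infty$ and centers $T_n\to+\infty$ large enough for the bumps $\phi_n(t):=L_n^{-1/2}\phi((t-T_n)/L_n)$ to have pairwise disjoint supports. Setting $f_n(t):=\phi_n(t)g_n$ produces an orthonormal sequence in $L^2(\bbR;\cH)$ with $f_n\in\dom(\bsD_\bsA^{})$ and
\[
(\bsD_\bsA^{}f_n)(t)=\phi_n'(t)g_n+\phi_n(t)A(t)g_n,\qquad (\bsD_\bsA^*f_n)(t)=-\phi_n'(t)g_n+\phi_n(t)A(t)g_n.
\]

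The $L^2(\bbR;\cH)$-norm of $\phi_n' g_n$ equals $L_n^{-1}\|\phi'\|_{L^2(\bbR)}\to 0$. For the second summand I decompose $A(t)g_n=A_+g_n+(B(t)-B_+)(|A_-|+I_\cH)^{-1}(|A_-|+I_\cH)g_n$ on $\dom(A_-)$ and use the operator-norm tail estimate
\[
\sup_{t\ge T_n}\big\|(B(t)-B_+)(|A_-|+I_\cH)^{-1}\big\|_{\cB(\cH)}\le\int_{T_n}^\infty ds\,\big\|B'(s)(|A_-|+I_\cH)^{-1}\big\|_{\cB_1(\cH)}\to 0,
\]
which is a direct consequence of Hypothesis \ref{h2.1}(iv) and \eqref{2.7A}. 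Combined with $\|A_+g_n\|_\cH\le 1/n$ and the uniform bound on $\|(|A_-|+I_\cH)g_n\|_\cH$, this gives $\sup_{t\in\supp(\phi_n)}\|A(t)g_n\|_\cH\to 0$, whence $\|\phi_n A(\cdot)g_n\|_{L^2(\bbR;\cH)}\to 0$, completing the argument.

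The main obstacle is precisely the uniform graph bound $\sup_n\|(|A_-|+I_\cH)g_n\|_\cH<\infty$: without it, the operator-norm tail decay above cannot be converted into pointwise decay of $(A(t)-A_+)g_n$ on the moving supports of $\phi_n$. This bound rests on the domain equality $\dom(A_+)=\dom(A_-)$ from \eqref{2.5}, itself a nontrivial consequence of Hypothesis \ref{h2.1}. The mirror case $0\in\sigma(A_-)$ is handled analogously: one chooses approximate null vectors $g_n$ of $A_-$ (for which $\|(|A_-|+I_\cH)g_n\|_\cH\le 2$ automatically) and localizes the bumps at $T_n\to-\infty$, exploiting $B_-=0$ and the integrated tail bound $\int_{-\infty}^{-T_n}ds\,\|B'(s)(|A_-|+I_\cH)^{-1}\|_{\cB_1(\cH)}\to 0$ in place of the previous one.
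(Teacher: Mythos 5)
Your proposal is correct and follows essentially the same route as the paper's proof: a travelling, spreading Weyl sequence built from approximate null vectors of $A_+$ (resp.\ $A_-$), controlled via the closed graph theorem on $\dom(A_+)=\dom(A_-)$ and the integrated tail bound on $B'(\cdot)(|A_-|+I_{\cH})^{-1}$, together with the standard singular-sequence criterion for upper-semi-Fredholmness. The only cosmetic difference is that you kill $\bsD_\bsA^{}f_n$ and $\bsD_\bsA^*f_n$ simultaneously with one sequence, whereas the paper treats the upper-semi-Fredholm case first and then passes to adjoints via \eqref{2.14}.
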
 
\begin{proof}
That $\bsD_\bsA^{}$ is closed and densely defined in $L^2(\bbR; \cH)$, as well as the 
sufficiency of the condition $0 \in \rho(A_+)\cap\rho(A_-)$ for the Fredholm property of 
$\bsD_\bsA^{}$ have been proven in \cite[Lemma\ 4.2, Remark\ 4.3]{GLMST11}. It remains to show that the condition 
$0 \in \rho(A_+)\cap\rho(A_-)$ is also necessary for the Fredholm property of $\bsD_\bsA^{}$. 

Arguing by contradiction, let $0 \in \sigma (A_+)$. Then there exists a sequence 
\begin{equation} 
\{x_n\}_{n \in \bbN} \subset \dom(A_+), \text{ with $\|x_n\|_{\cH} = 1$, $n \in \bbN$, such that 
$\|A_+ x_n\|_{\cH} \underset{n\to\infty}{\longrightarrow} 0$.} 
\end{equation}
For each $n\in\bbN$, consider a smooth function $\chi_n:\bbR\to[0,1]$
such that
\begin{equation}
\chi_n(t)= \begin{cases} 1, & |t|\leq n, \\ 0, & |t|\geq n+1, \end{cases}
\quad  |\chi^{\prime}_n(t)|\leq 2, \; t\in\bbR.
\end{equation}
Next, we define $\{{\bf x}_n\}_{n \in \bbN} \subset \dom(\bsD_{\bsA}^{})$ by 
\begin{equation}
{\bf x}_n(t)= \|\chi_n\|^{-1}_{L^2(\bbR; dt)} \chi_n (t-2n-2) x_n, \quad t\in \bbR \; n \in \bbN,
\end{equation}
and note that $\|{\bf x}_n \|_{L^2(\bbR;\cH)}=1$, $n \in \bbN$, 
and
\begin{equation} \lb{4.57}
{\bf x}_n \underset{n\to\infty}{\longrightarrow} 0 \, \text{ weakly,} 
\end{equation}
since for each $n \in \bbN$ the support of ${\bf x}_n$ is contained in $[n+1,3n+3]$. 
Moreover,
\begin{align}
& \|\bsD_{\bsA}^{}{\bf x}_n\|^2_{L^2(\bbR;\cH)}   \no \\
& \quad = \|\chi_n\|^{-2}_{L^2(\bbR; dt)} \left \|\chi^{\prime}_n(\cdot-2n-2)x_n 
+ \chi_n(\cdot-2n-2) A(\cdot)x_n \right \|^2_{L^2(\bbR;\cH)}    \no \\
& \quad \leq 2\|\chi_n\|^{-2}_{L^2(\bbR; dt)} \|\chi^{\prime}_n(\cdot-2n-2)x_n \|^2_{L^2(\bbR;\cH)}    
\no \\
& \qquad + 2\|\chi_n\|^{-2}_{L^2(\bbR; dt)} 
\left \| \chi_n(\cdot-2n-2) A(\cdot)x_n \right \|^2_{L^2(\bbR;\cH)}  \no \\
& \quad \leq 2\|\chi_n\|^{-2}_{L^2(\bbR; dt)} \|\chi^{\prime}_n \|^2_{L^2(\bbR; dt)} 
+ 2 \sup_{t \in [n+1, 3n+3]} \|A(t)x_n\|_{\cH}^2     \no \\
& \quad \leq  2\|\chi_n\|^{-2}_{L^2(\bbR; dt)} \|\chi^{\prime}_n \|^2_{L^2(\bbR; dt)} 
+ 4 \|A_+ x_n\|_{\cH}^2 + 4\sup_{t \in [n+1, 3n+3]} \|B(t)x_n\|_{\cH}^2   \no \\
& \quad =  2\|\chi_n\|^{-2}_{L^2(\bbR; dt)} \|\chi^{\prime}_n \|^2_{L^2(\bbR; dt)} 
+ 4 \|A_+ x_n\|_{\cH}^2     \no \\
&\quad  
+4\sup_{t \in [n+1, 3n+3]} \|B(t)(A_++iI_\cH)^{-1}(A_++iI_\cH)x_n\|_{\cH}^2  \no\\
& \quad \leq  2\|\chi_n\|^{-2}_{L^2(\bbR; dt)} \|\chi^{\prime}_n \|^2_{L^2(\bbR; dt)} 
+ 4 \|A_+ x_n\|_{\cH}^2     \no \\
& \quad + 4 \|(A_+ + i I_{\cH})x_n\|_{\cH}^2
\sup_{t \in [n+1, 3n+3]} \|B(t)(A_+ + i I_{\cH})^{-1}\|^2_{\cB(\cH)}\no\\
& \quad \leq  2\|\chi_n\|^{-2}_{L^2(\bbR; dt)} \|\chi^{\prime}_n \|^2_{L^2(\bbR; dt)} 
+ 4 \|A_+ x_n\|_{\cH}^2     \no \\
&\qquad + 8 \big[\|A_+ x_n\|_{\cH}^2 
+ 1\big]\sup_{t \in [n+1, 3n+3]}\|B(t)(A_+ + i I_{\cH})^{-1}\|_{\cB(\cH)}^2.
\end{align}

Since
\begin{equation}
\|\chi_n\|^{-2}_{L^2(\bbR; dt)} \|\chi^{\prime}_n \|^2_{L^2(\bbR; dt)} 
\underset{n \to \infty}{\longrightarrow} 0,
\end{equation}
by construction, and
\begin{align}
& \sup_{t \in [n+1, 3n+3]}\|B(t)(A_+ + i I_{\cH})^{-1}\|_{\cB(\cH)}   \no \\
& \quad = \sup_{t \in [n+1, 3n+3]}\|B(t) (A_- + i I_{\cH})^{-1} 
(A_- + i I_{\cH}) (A_+ + i I_{\cH})^{-1}\|_{\cB(\cH)}  \no \\
& \quad \leq \sup_{t \in [n+1, 3n+3]}\|B(t) (A_- + i I_{\cH})^{-1}\|_{\cB(\cH)}
\|(A_- + i I_{\cH}) (A_+ + i I_{\cH})^{-1}\|_{\cB(\cH)}   \no \\
& \quad \leq C \sup_{t \in [n+1, 3n+3]}\|B(t) (A_- + i I_{\cH})^{-1}\|_{\cB(\cH)}
\underset{n \to \infty}{\longrightarrow} 0,    \lb{4.60} 
\end{align}
by hypothesis  (cf.\ \eqref{2.7A}), and the fact that $\dom(A_-) 
= \dom(A_+)$ implies $\|(A_- + i I_{\cH}) (A_+ + i I_{\cH})^{-1}\|_{\cB(\cH)} \leq C$,  
applying the closed graph theorem. Thus, one obtains 
\begin{equation}
\|\bsD_{\bsA}^{}{\bf x}_n\|^2_{L^2(\bbR;\cH)} \underset{n \to \infty}{\longrightarrow} 0.  \lb{4.61} 
\end{equation}

At this point we mention the following sufficient condition for $T$ to be an upper-semi-Fredholm 
operator, to be found in \cite{Wo59} (where properties of $T^*T$ are exploited). 
For convenience of the reader we briefly state it together with its short proof:  
\begin{align}
& \text{Let $T$ be densely defined and closed in $\cK$. Then $T$ is {\bf not} upper-semi-Fredholm} 
\no \\  
& \quad \text{if there exists a sequence $\{y_n\}_{n\in\bbN} \subset \cK$, 
$\|y_n\|_{\cK} = 1$, $n \in\bbN$, such that}   \no \\
& \quad \text{$y_n \underset{n\to\infty}{\longrightarrow} 0$ weakly and 
$\|T y_n\|_{\cK} \underset{n\to\infty}{\longrightarrow} 0$.}    \lb{4.62}
\end{align}
For the proof of \eqref{4.62} it suffices to note that if $T$ is upper semi-Fredholm, then there exists 
$S \in \cB(\cK)$ and $K \in \cB_{\infty}(\cK)$ such that  $ST = I_{\cK} + K$ on $\dom(T)$ 
(cf.\ \cite[Ch.\ 7]{Sc02}). Then one has $\|STy_n \| \underset{n \to \infty}{\longrightarrow} 0$, and,  
 in view of compactness of $K$, 
$\|ST y_n\|_{\cK} = \|(I_{\cK} + K) y_n\|_{\cK} \underset{n \to \infty}{\longrightarrow} 1$, a contradiction. 

In view of \eqref{4.57} and \eqref{4.61}, the latter property implies that $\bsD_{\bsA}^{}$ is not 
an upper-semi-Fredholm, let alone a Fredholm operator. 
Similarly, passing to adjoint operators, $\bsD_{\bsA}^{}$ is not a lower-semi-Fredholm 
operator (using the fact that the adjoint of a closed, densely defined, upper-semi-Fredholm 
operator is lower-semi-Fredholm and employing \eqref{2.14}).

Finally, considering
$\{{\bf \widetilde x}_n\}_{n \in \bbN} \subset \dom(\bsD_{\bsA}^{})$ defined by
\begin{equation}
{\bf \widetilde x}_n(t)= \|\chi_n\|^{-1}_{L^2(\bbR; dt)} \chi_n (t+2n+2) x_n, \quad n \in \bbN,
\end{equation}
and noting $\|{\bf \widetilde x}_n \|_{L^2(\bbR;\cH)}=1$, $n \in \bbN$, and
\begin{equation}
{\bf \widetilde x}_n \underset{n \to \infty}{\longrightarrow} 0 \, \text{ weakly,} 
\end{equation}
entirely analogous arguments show that $0 \in \sigma (A_-)$ implies once more that 
$\bsD_{\bsA}^{}$ is not upper-semi-Fredholm (and again not lower-semi-Fredholm 
by passing to adjoints).
\end{proof} 

\begin{remark} \lb{r2.6a} 
Note that apart from a ``minor'' regularity of $\bsA$, implying, in
particular, \eqref{2.14}, and the property \eqref{4.60} (and its
analogue for $n \to -\infty$) no other properties of
$\bsD_\bsA^{}$ were used in the proof of Theorem \ref{t2.6}. Thus,
Theorem \ref{t2.6} holds in greater generality and complements
\cite[Theorem\ 3.12]{RS95}. However, for clarity of
exposition, here and in the sequel, we prefer to stick to
Hypothesis \ref{h2.1}, although, some of the results on
$\bsD_\bsA^{}$ might hold under milder assumptions on $\bsA$. 
$\Diamond$
\end{remark}

To the best of our knowledge, the ``only if'' characterization of the Fredholm property of 
$\bsD_\bsA^{}$ under Hypothesis \ref{h2.1} is new. In this context we also refer to 
\cite[Propositions 7.11, 7.12,  Corollary 7.13 ]{LT05} 
and \cite[Theorem\ 1.2]{LP08}. (In the context of stronger 
assumptions than those in Hypothesis \ref{h2.1} we also refer to \cite{CGPST13}.)  

Thus, if $0$ belongs to either of $\sigma(A_+)$ or $\sigma(A_-)$
then $\bsD_\bsA^{}$ ceases to be Fredholm. On the other hand,
under Hypothesis \ref{h2.1}, formulas \eqref{2.20}, \eqref{2.30}
and \eqref{2.37a} still hold, and this brings the spectral shift
function into our study of fine spectral properties
$\bsD_\bsA^{}$ in terms of those of $A_\pm$. 

In fact, Theorem \ref{t2.6} yields a complete description of the essential
spectrum of $\bsD_\bsA^{}$:  Define the essential spectrum of a
densely defined and closed operator $T$ in the complex, separable Hilbert space $\cK$ as
\begin{equation}
\sigma_{ess}(T) = \{\lambda \in \bbC \, | \, T-\lambda I_{\cK} \, \,
\text{is not Fredholm}\}.
\end{equation}

\begin{corollary} \lb{c2.8} 
Assume Hypothesis \ref{h2.1}. Then
\begin{equation}\lb{essential}
\sigma_{{\rm ess}}(\bsD_\bsA^{})= (\sigma(A_+) + i \, \bbR) \cup 
(\sigma(A_-) + i \, \bbR).
\end{equation}
\end{corollary}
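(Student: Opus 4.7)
The plan is to reduce the computation of $\sigma_{\rm ess}(\bsD_\bsA^{})$ to a repeated application of Theorem \ref{t2.6}, using an elementary unitary equivalence to absorb the imaginary part of the spectral parameter.

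First, fix $\lambda = a + ib \in \bbC$ with $a,b \in \bbR$, and write
\begin{equation*}
\bsD_\bsA^{} - \lambda \bsI = \bsD_{\bsA - a \bsI}^{} - ib \bsI,
\end{equation*}
where $\bsA - a \bsI$ corresponds to the shifted family $\{A(t) - a I_\cH\}_{t\in\bbR}$, whose norm resolvent asymptotes are $A_\pm - a I_\cH$. The first step is to observe that Hypothesis \ref{h2.1} is preserved under the shift $A_- \rightsquigarrow A_- - a I_\cH$: the domains coincide, $B(t)$ and $B'(t)$ are unchanged, and the factorization
\begin{equation*}
B(t) (|A_- - a I_\cH| + I_\cH)^{-1} = [B(t) (|A_-| + I_\cH)^{-1}] [(|A_-| + I_\cH)(|A_- - a I_\cH| + I_\cH)^{-1}]
\end{equation*}
(together with the analogous factorization for $B'(t)$) transports the $\cB_1(\cH)$ bounds and the integrability \eqref{2.2} to the shifted setting, since the second factor is bounded by the closed graph theorem.

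Second, introduce the unitary multiplier $(V_b f)(t) = e^{ibt} f(t)$ on $L^2(\bbR;\cH)$. A direct computation gives $V_b^* (d/dt) V_b = d/dt + i b \bsI$, while $V_b$ commutes with $\bsA - a \bsI$, so that
\begin{equation*}
V_b^* \big(\bsD_{\bsA - a \bsI}^{} - ib \bsI\big) V_b = \bsD_{\bsA - a\bsI}^{}.
\end{equation*}
Thus $\bsD_\bsA^{} - \lambda \bsI$ is Fredholm if and only if $\bsD_{\bsA - a \bsI}^{}$ is Fredholm. By Theorem \ref{t2.6} applied to the shifted family, this happens precisely when $0 \in \rho(A_+ - a I_\cH) \cap \rho(A_- - a I_\cH)$, i.e., when $a \in \rho(A_+) \cap \rho(A_-)$.

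Finally, the conclusion follows: $\lambda = a + ib \in \sigma_{\rm ess}(\bsD_\bsA^{})$ if and only if $a \in \sigma(A_+) \cup \sigma(A_-)$, which is exactly the claim \eqref{essential}. The only step requiring real care is the verification that Hypothesis \ref{h2.1} persists after the real shift; once that is in hand, the unitary equivalence is routine and Theorem \ref{t2.6} does the rest.
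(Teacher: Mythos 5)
Your proposal is correct and follows essentially the same route as the paper: a unitary multiplication operator $e^{ibt}$ to remove the imaginary part of the spectral parameter, combined with Theorem \ref{t2.6} applied to the real-shifted family $\{A(t)-aI_{\cH}\}_{t\in\bbR}$. Your explicit verification that Hypothesis \ref{h2.1} survives the real shift (via the factorization through $(|A_-|+I_{\cH})(|A_--aI_{\cH}|+I_{\cH})^{-1}$) is a point the paper passes over in one clause, but it is the same argument.
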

\begin{proof} 
Observe that if $M_\mu$ is a unitary operator on $L^2(\bbR;\cH)$
defined by $(M_\mu f)(s)=e^{-i\mu s} f(s)$, $f \in L^2(\bbR;\cH)$, $\mu \in \bbR$, 
then clearly $M_{\mu}^{-1}(\bsD_\bsA^{} - \lambda \bsI) M_\mu = \bsD_\bsA^{}
-(\lambda + i\mu) \bsI$. 
Hence, the essential spectrum of $\bsD_\bsA^{}$ is invariant with
respect to shifts along the imaginary axis. Moreover, if $\lambda
\in \bbR$, then setting $(A-\lambda I_{\cH})_-:=A_--\lambda I_{\cH}$, 
and leaving $B(t)$ unchanged one obtains in \eqref{2.6} that 
$(A-\lambda I_{\cH})_+=A_+-\lambda I_{\cH}$,  
and hence Theorem \ref{t2.6} implies that
$\bsD_\bsA^{}-\lambda \bsI = \bsD_{\bsA-\lambda \bsI}^{}$ is Fredholm if only
if $\lambda \in \rho (A_+)\cap \rho (A_-)$. Thus, a combination of
the two observations above yields
\begin{equation}
\sigma_{ess}(\bsD_\bsA^{})=\{\lambda + i\mu \in \bbC \, | \, \lambda \in
\sigma (A_+), \, \mu \in \bbR \} \cup \{\lambda+i\mu \in \bbC \, | \, \lambda
\in \sigma (A_-), \, \mu \in \bbR \},
\end{equation}
and hence \eqref{essential}. 
\end{proof} 

For invariance of the spectrum (and sums of its subsets) of $\bsD_\bsA^{}$ (in fact, for more general 
evolution operators) with respect to translations parallel to $i \, \bbR$, we refer to 
\cite[Proposition~2.36]{CL99}.

The computation of the Fredholm index of $\bsD_\bsA^{}$ in terms of spectral shift functions 
is provided next.  

\begin{theorem} [\cite{GLMST11}] \lb{t2.7}
Assume Hypothesis \ref{h2.1} and suppose that $0 \in \rho(A_+)\cap\rho(A_-)$. Then $\bsD_\bsA^{}$ is a Fredholm operator in $L^2(\bbR; \cH)$ and the following equalities hold for its Fredholm index:
\begin{align}
\ind (\bsD_\bsA^{}) & = \xi(0_+; \bsH_2, \bsH_1)    \lb{2.38} \\  
& = \xi(0;A_+,A_-)     \lb{2.39} \\ 
& = {\tr}_{\cH}(E_{A_-}((-\infty,0))-E_{A_+}((-\infty,0)))     \lb{2.39a} \\ 
& = \pi^{-1} \lim_{\varepsilon \downarrow 0}\Im\big(\ln\big({\det}_{\cH}
\big((A_+ - i \varepsilon I_{\cH})(A_- - i \varepsilon I_{\cH})^{-1}\big)\big)\big),       \lb{2.40}
\end{align}
with a choice of branch of $\ln({\det}_{\cH}(\cdot))$ on $\bbC_+$ analogous to that in Theorem \ref{t2.4}. 
\end{theorem}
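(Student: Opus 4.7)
My plan is to establish the four equalities one at a time, moving essentially from the ``spectral'' side to the ``determinant'' side, exploiting the results collected earlier in the paper.

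\emph{Step 1 (Fredholm index $=$ jump of $\xi$ at $0_+$).}
The Fredholm property of $\bsD_\bsA^{}$ itself, under the assumption $0\in\rho(A_+)\cap\rho(A_-)$, is already part of Theorem \ref{t2.6}. I would first observe that by Corollary~\ref{c2.8} the essential spectrum of $\bsD_\bsA^{}$ is the union $(\sigma(A_+) + i\bbR) \cup (\sigma(A_-) + i\bbR)$, so $0$ is isolated in $\sigma_{ess}(\bsH_1)\cup\sigma_{ess}(\bsH_2)$; this means there exists $\delta>0$ such that $\sigma(\bsH_j)\cap(0,\delta)=\varnothing$ for $j=1,2$, with $0$ an eigenvalue of finite multiplicity (or not in the spectrum). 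By \eqref{2.28} and a standard resolvent computation, the spectral shift function $\xi(\,\cdot\,;\bsH_2,\bsH_1)$ is constant on $(0,\delta)$, and the jump of $\xi$ at $0$ equals $\dim\ker(\bsH_1)-\dim\ker(\bsH_2) = \dim\ker(\bsD_\bsA^{}) - \dim\ker(\bsD_\bsA^*) = \ind(\bsD_\bsA^{})$. Together with the normalization \eqref{2.27}, this gives \eqref{2.38}.

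\emph{Step 2 (From $\xi(\,\cdot\,;\bsH_2,\bsH_1)$ to $\xi(\,\cdot\,;A_+,A_-)$).}
Here I invoke the Pushnitski formula of Theorem \ref{t2.5}:
\begin{equation*}
\xi(\lambda;\bsH_2,\bsH_1)=\frac{1}{\pi}\int_{-\lambda^{1/2}}^{\lambda^{1/2}}\frac{\xi(\nu;A_+,A_-)\,d\nu}{(\lambda-\nu^2)^{1/2}},\qquad\text{a.e.\ }\lambda>0.
\end{equation*}
Under the assumption $0\in\rho(A_+)\cap\rho(A_-)$ together with \eqref{2.7}, a Weyl-type argument shows that $\xi(\,\cdot\,;A_+,A_-)$ is integer-valued and constant in a neighborhood of $\nu=0$, so one can substitute $\nu=\lambda^{1/2}u$ in the integral and pass to the limit $\lambda\downarrow 0$, obtaining
\begin{equation*}
\xi(0_+;\bsH_2,\bsH_1)=\xi(0;A_+,A_-)\cdot\frac{1}{\pi}\int_{-1}^{1}\frac{du}{\sqrt{1-u^2}}=\xi(0;A_+,A_-),
\end{equation*}
which proves \eqref{2.39}.

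\emph{Step 3 (Spectral counting interpretation).}
Since $0$ lies in the resolvent set of both $A_+$ and $A_-$, the difference $E_{A_-}((-\infty,0))-E_{A_+}((-\infty,0))$ is trace class (by \eqref{2.7} and the fact that $\chi_{(-\infty,0)}$ is smooth on a neighborhood of each spectrum). I would apply Krein's trace formula with a smooth $\phi$ equal to $\chi_{[0,\infty)}$ in neighborhoods of $\sigma(A_\pm)$, and observe that $\phi'$ may be taken supported in $\rho(A_+)\cap\rho(A_-)$, where $\xi(\,\cdot\,;A_+,A_-)$ takes the constant value $\xi(0;A_+,A_-)$. This yields
\begin{equation*}
\tr_\cH(E_{A_-}((-\infty,0))-E_{A_+}((-\infty,0)))=\xi(0;A_+,A_-),
\end{equation*}
which is \eqref{2.39a}.

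\emph{Step 4 (Perturbation determinant representation).}
For \eqref{2.40} I invoke Theorem \ref{t2.4}: in our situation $\xi(\,\cdot\,;A_+,A_-)$ is continuous (in fact locally constant) at $\lambda=0$, and so the a.e.\ identity \eqref{2.34} specializes to the pointwise equality \eqref{2.35}, giving the desired formula with the prescribed choice of branch.

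I expect the most delicate step to be Step 1, because even though $0$ is removed from the \emph{essential} spectrum of $\bsD_\bsA^{}$, one still has to argue carefully, using \eqref{2.28} and the pole structure of $(\bsH_j-z\bsI)^{-1}$ at $z=0$, that the normalization $\xi(\lambda;\bsH_2,\bsH_1)=0$ for $\lambda<0$ combined with isolatedness of $0$ in $\sigma(\bsH_j)$ forces the right-Lebesgue value $\xi(0_+;\bsH_2,\bsH_1)$ to coincide with $\ind(\bsD_\bsA^{})$ rather than with some other jump. The other steps are direct consequences of the tools assembled in Theorems \ref{t2.2}--\ref{t2.5}.
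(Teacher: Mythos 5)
The paper itself does not prove Theorem \ref{t2.7} --- it is imported verbatim from \cite{GLMST11} --- so your task amounts to reassembling the proof from the ingredients the paper supplies, and your assembly is essentially the intended one: \eqref{2.38} is Theorem \ref{t3.8} specialized to $T=\bsD_\bsA^{}$; \eqref{2.39} follows from Pushnitski's formula (Theorem \ref{t2.5}) combined with the local constancy of $\xi(\,\cdot\,;A_+,A_-)$ on the spectral gap of $A_\pm$ around $0$ and the normalization $\frac1\pi\int_{-1}^1(1-u^2)^{-1/2}\,du=1$; \eqref{2.39a} is Krein's trace formula applied to a smoothed step function whose derivative is supported in the gap; and \eqref{2.40} is exactly \eqref{2.35} of Theorem \ref{t2.4}, which is stated under precisely the hypothesis $0\in\rho(A_+)\cap\rho(A_-)$. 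Steps 2--4 are correct as written; for Step 3 one should, strictly speaking, also verify that the smoothed characteristic function is admissible for the trace formula in this relative trace class setting, which is what the paper's Lemmas \ref{lB.3} and \ref{lB.4} are designed to do.

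The one place where your argument is stated incorrectly is in Step 1: from $0\notin\sigma_{ess}(\bsH_j)$ you cannot conclude that $\sigma(\bsH_j)\cap(0,\delta)=\varnothing$, since $\bsH_j$ may have discrete eigenvalues in $(0,\delta)$ below the bottom of its essential spectrum. What rescues the argument (and what the paper itself points out in the discussion following Theorem \ref{t3.8}) is that the nonzero eigenvalues of $\bsD_\bsA^*\bsD_\bsA^{}$ and $\bsD_\bsA^{}\bsD_\bsA^*$ coincide together with their multiplicities (cf.\ \cite{De78}), so that $\xi(\,\cdot\,;\bsH_2,\bsH_1)$ is nevertheless constant a.e.\ on $\big(0,\inf\sigma_{ess}(\bsH_1)\big)$ and equal there to $\dim\ker(\bsH_1)-\dim\ker(\bsH_2)=\ind(\bsD_\bsA^{})$. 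With that correction Step 1 coincides with the paper's Theorem \ref{t3.8}, and the whole proposal is sound.
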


\begin{remark} \lb{r2.8} (Topological invariance.) One notes the 
remarkable independence of 
\begin{equation}
\tr_{L^2(\bbR;\cH)}\big((\bsH_2 - z \bsI)^{-1} 
- (\bsH_1 - z \bsI)^{-1}\big), \quad \xi(\lambda; \bsH_2, \bsH_1), 
\, \text{ and } \, \ind (\bsD_\bsA^{})    \lb{2.41}
\end{equation} 
of the details of the operator path $\{A(t)\}_{t\in\bbR}$. Indeed, only the asymptotes 
$A_{\pm}$ of the family $A(t)$, $t \in \bbR$, enter $\xi(\,\cdot\,; A_+,A_-)$ and hence by \eqref{2.19}, \eqref{2.37a}, and \eqref{2.39}, the quantities in 
\eqref{2.41}. This is sometimes dubbed topological invariance in the pertinent 
literature (see, e.g., \cite{BGGSS87}, \cite{Ca78}, \cite{GLMST11}, \cite{GS88}, 
and the references therein). $\Diamond$
\end{remark}

The remainder of this paper will primarily focus on the case where $\bsD_\bsA^{}$ ceases 
to be a Fredholm operator in $L^2(\bbR; \cH)$.

\section{The Resolvent and Semigroup Regularized Witten Indices} \lb{s3}

The Witten index of an operator $T$ in $\cH$ is a natural substitute for the 
Fredholm index of $T$ in cases where the operator $T$ ceases to have the 
Fredholm property. In particular, as shown in \cite{GS88}, the Witten index possesses stability 
properties with respect to additive perturbations, however, necessarily under considerably 
stronger hypotheses (very roughly speaking, relative trace class type perturbations) than in the 
case of Fredholm indices (were relatively compact perturbations can be handled). This notion 
became popular in connection with a variety of 
examples in supersymmetric quantum mechanics in the 1980's, and after 
briefly recalling some of the basic definitions we will derive new properties of 
the Witten index of $T$ in this section.

We start with the following facts on trace class properties of resolvent and semigroup 
differences (some being well-known, such as item $(i)$ below, cf.\ \cite[p.\ 178]{We80}): 

\begin{lemma} \lb{l3.1}
Suppose that $0 \leq S_j$, $j=1,2$, are nonnegative, self-adjoint operators 
in $\cH$. Then the following assertions $(i)$--$(iii)$ hold: \\
$(i)$ If $\big[(S_2 - z_0 I_{\cH})^{-1} - (S_1 - z_0 I_{\cH})^{-1}\big] \in \cB_1(\cH)$ for 
some $z_0 \in \rho(S_1)\cap \rho(S_2)$, then
\begin{equation}
\big[(S_2 - z I_{\cH})^{-1} - (S_1 - z I_{\cH})^{-1}\big] \in \cB_1(\cH) 
\, \text{ for all $z \in \rho(S_1)\cap \rho(S_2)$.}
\end{equation} 
$(ii)$ If  $\big[e^{- z S_2} - e^{- z S_1}\big] \in \cB_1(\cH)$ for $z$ in a neighborhood of 
zero in the open right half-plane, then 
\begin{equation}
\big[e^{- z S_2} - e^{- z S_1}\big] \in \cB_1(\cH), \quad \Re(z) > 0. 
\end{equation}  
Moreover, if $\big[e^{- t_0 S_2} - e^{- t_0 S_1}\big] \in \cB_1(\cH)$ for some $t_0 > 0$, then 
\begin{equation}
\big[e^{- t S_2} - e^{- t S_1}\big] \in \cB_1(\cH) \, \text{ for all $t \geq t_0$.}   \lb{3.3a}
\end{equation}
$(iii)$ If $\big[(S_2 - z_0 I_{\cH})^{-1} - (S_1 - z_0 I_{\cH})^{-1}\big] \in \cB_1(\cH)$ for some 
$($and hence for all\,$)$ $z_0 \in \rho(S_1)\cap \rho(S_2)$, then 
\begin{equation}
\big[e^{- t S_2} - e^{- t S_1}\big] \in \cB_1(\cH) \, \text{ for all $t > 0$.}   \lb{3.4a} 
\end{equation}
\end{lemma}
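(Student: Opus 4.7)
My plan is to treat the three parts in increasing order of difficulty, relying on resolvent identities for $(i)$, a telescoping trick for $(ii)$, and a Dunford--Taylor contour integration combined with iterated resolvent powers for $(iii)$. For $(i)$, I would apply the symmetric form of the second resolvent identity: for $z, z_0 \in \rho(S_1) \cap \rho(S_2)$,
\begin{equation*}
(S_2 - z I_\cH)^{-1} - (S_1 - z I_\cH)^{-1} = [I_\cH + (z - z_0)(S_2 - z I_\cH)^{-1}] \bigl[(S_2 - z_0 I_\cH)^{-1} - (S_1 - z_0 I_\cH)^{-1}\bigr] [I_\cH + (z - z_0)(S_1 - z I_\cH)^{-1}].
\end{equation*}
The middle factor is trace class by hypothesis, the outer factors are bounded, and the ideal property of $\cB_1(\cH)$ delivers the conclusion.

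For the first assertion of $(ii)$, given $z$ with $\Re(z) > 0$, I would pick $n \in \bbN$ large enough that $z/n$ lies in the given neighborhood of zero in the right half-plane. The semigroup law $e^{-zS_j} = (e^{-(z/n)S_j})^n$ combined with the telescoping identity
\begin{equation*}
A^n - B^n = \sum_{k=0}^{n-1} A^k(A-B) B^{n-1-k}, \qquad A := e^{-(z/n)S_2}, \quad B := e^{-(z/n)S_1},
\end{equation*}
expresses the semigroup difference as a sum of trace class operators, each controlled by $\|A-B\|_{\cB_1(\cH)}$, since $\|A^k\|, \|B^{n-1-k}\| \leq 1$ (a consequence of $S_j \geq 0$ and $\Re(z/n) > 0$). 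For the second assertion, setting $Y_j := e^{-t_0 S_j}$ (a bounded positive contraction with $Y_2 - Y_1 \in \cB_1(\cH)$), the identity $(\lambda I_\cH - Y_2)^{-1} - (\lambda I_\cH - Y_1)^{-1} = (\lambda I_\cH - Y_2)^{-1}(Y_1 - Y_2)(\lambda I_\cH - Y_1)^{-1}$ places the resolvent difference of $Y_j$ in $\cB_1(\cH)$ for every $\lambda \in \rho(Y_1) \cap \rho(Y_2)$; I would then reconstruct $e^{-tS_j} = Y_j^{t/t_0}$ via the Dunford--Riesz calculus around $\sigma(Y_j) \subseteq [0, 1]$, so that the trace norm bound on the integrand, uniform on the contour, forces convergence of the semigroup difference in $\cB_1(\cH)$.

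For $(iii)$, I would iterate $(i)$: induction on $n$ using the telescoping identity for $(S_2 - zI_\cH)^{-n} - (S_1 - zI_\cH)^{-n}$ places every resolvent power difference in $\cB_1(\cH)$, with trace norm bound of order $|\lambda|^{-(n-1)}$ along a vertical contour $\Gamma$ just to the left of $[0,\infty)$. Performing $n-1$ integrations by parts in the basic Dunford--Taylor formula expresses $e^{-tS_j}$ as a fixed multiple of $t^{-(n-1)} \int_\Gamma e^{-t\lambda}(\lambda I_\cH - S_j)^{-n}\,d\lambda$; for $n \geq 3$, the combined estimate makes the integrand absolutely convergent in trace norm on the difference, yielding $e^{-tS_2} - e^{-tS_1} \in \cB_1(\cH)$ for every $t > 0$.

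The principal obstacle will be the second assertion of $(ii)$. The naive grouping $e^{-tS_2} - e^{-tS_1} = [e^{-t_0 S_2} - e^{-t_0 S_1}] e^{-(t-t_0) S_2} + e^{-t_0 S_1}[e^{-(t-t_0) S_2} - e^{-(t-t_0) S_1}]$ closes only for integer multiples $t = n t_0$ (via telescoping); for $t - t_0 \in (0, t_0)$ the residual term is not manifestly trace class. The functional-calculus route via $Y_j = e^{-t_0 S_j}$ must additionally deal with the possible presence of $0 \in \sigma(Y_j)$, which occurs precisely when $S_j$ is unbounded and forces $\lambda \mapsto \lambda^{t/t_0}$ to have a branch point inside the contour for non-integer $t/t_0$. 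Circumventing this will require either an $\varepsilon$-regularization $Y_j + \varepsilon I_\cH$ with uniform trace norm control as $\varepsilon \downarrow 0$, or an operator-Lipschitz estimate of Birman--Solomyak/Peller type for the smooth function $y \mapsto y^{t/t_0}$ on $[0, 1]$.
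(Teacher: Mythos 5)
Your treatment of $(i)$ and of the first assertion of $(ii)$ coincides with the paper's: item $(i)$ is exactly the symmetric resolvent identity \eqref{difresolv}, and the paper's proof of the first part of $(ii)$ is the $n=2$ case of your telescoping identity, iterated (doubling $z \mapsto 2z$ until the whole right half-plane is covered). Your route to $(iii)$ is genuinely different from the paper's proof of Lemma \ref{l3.1}\,$(iii)$, which invokes the operator-Lipschitz fact \eqref{3.9a} for a smooth, compactly supported modification of $\lambda \mapsto e^{-1/\lambda}$ evaluated at the bounded operators $(t(S_j+I_{\cH}))^{-1}$. Your plan --- an inverse-Laplace (Bromwich) representation with $n \geq 3$ resolvent powers, so that the $O(|\lambda|^{-(n-1)})$ trace-norm bound on $R_2(\lambda)^n - R_1(\lambda)^n$ makes the integral absolutely convergent in $\cB_1(\cH)$ --- is sound (the estimate you state is correct on a vertical line $\Re(\lambda) = -\delta$, where $|\lambda|$ is bounded away from $0$), and it is in fact the method the paper deploys later for the sharper Theorem \ref{t3.9}, there with a sectorial contour and a single integration by parts since the exponential then decays along the rays. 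Your version avoids any double-operator-integral input for $(iii)$, at the modest cost of justifying the vertical-line inversion formula for resolvent powers.

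The one genuine gap is the second assertion of $(ii)$, estimate \eqref{3.3a}, which you correctly flag as the principal obstacle but do not close. The paper disposes of it by citation: \cite[Proposition\ 8.10.2]{Ya92} states that if $A, B \geq \varepsilon_0 I_{\cH}$ and $\big[B^{-\tau_0} - A^{-\tau_0}\big] \in \cB_1(\cH)$ then $\big[B^{-\tau} - A^{-\tau}\big] \in \cB_1(\cH)$ for all $\tau \geq \tau_0$, applied to $A = e^{S_1}$, $B = e^{S_2}$, $\varepsilon_0 = 1$, $\tau_0 = t_0$ (with operator-Lipschitz results of Peller and Potapov--Sukochev offered as alternatives). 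Of your two proposed remedies, the $\varepsilon$-regularization with a crude Dunford--Riesz contour estimate does not work as stated: on the horizontal parts of a contour at distance $\delta$ from $\sigma(Y_j + \varepsilon I_{\cH}) \subseteq [\varepsilon, 1+\varepsilon]$ the trace norm of the resolvent difference is only $O(\delta^{-2})$ while $|\lambda^{t/t_0}|$ is of order one there, so the bound on $\big\|(Y_2+\varepsilon I_{\cH})^{t/t_0} - (Y_1+\varepsilon I_{\cH})^{t/t_0}\big\|_{\cB_1(\cH)}$ blows up as $\varepsilon \downarrow 0$ and the Fatou property of $\cB_1(\cH)$ cannot be invoked; no contour estimate that only uses $\|Y_2 - Y_1\|_{\cB_1(\cH)}$ and resolvent norms can succeed here. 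Your second remedy does close the gap: for $t > t_0$ the function $y \mapsto y^{t/t_0}$ has a H\"older continuous derivative on $[0,1]$, hence a compactly supported extension lies in the Besov class of Theorem \ref{t6.2}, and $[Y_2 - Y_1] \in \cB_1(\cH)$ then yields \eqref{3.3a}; this is precisely the mechanism behind the references the paper substitutes for a proof. The proposal is complete only once that operator-Lipschitz step is actually carried out rather than merely named.
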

\begin{proof}
The proof of item $(i)$ follows from either of the elementary identities  
\begin{align}
& \big[(S_2 - z I_{\cH})^{-1} - (S_1 - z I_{\cH})^{-1}\big]   \no \\
& \quad = (S_1 - z_0 I_{\cH}) (S_1 - z I_{\cH})^{-1}
\big[(S_2 - z_0 I_{\cH})^{-1} - (S_1 - z_0 I_{\cH})^{-1}\big]     \no \\
& \qquad \times (S_2 - z_0 I_{\cH})(S_2 - z I_{\cH})^{-1},    \lb{3.5a} \\
& \quad = \big[I_{\cH} +(z - z_0)(S_1 - z I_{\cH})^{-1}\big] 
\big[(S_2 - z_0 I_{\cH})^{-1} - (S_1 - z_0 I_{\cH})^{-1}\big]      \lb{difresolv} \\
& \qquad \times \big[I_{\cH} + (z - z_0) (S_2 - z I_{\cH})^{-1}\big],  
\quad z, z_0 \in \rho(S_1)\cap \rho(S_2).     \no 
\end{align}
The proof of the first part of item $(ii)$ follows from
\begin{equation}
\big[e^{- 2z S_1} - e^{- 2z S_2}\big] 
= e^{-z S_1 } \big[e^{- z S_1} - e^{- z S_2}\big] 
+ \big[e^{- z S_1} - e^{- z S_2}\big] e^{-z S_2},  \quad \Re(z) >0,
\end{equation}
while the second part of $(ii)$ (which is proved, e.g., in \cite{GS88}) follows from the general fact 
(cf.\ \cite[Proposition\ 8.10.2]{Ya92}), 
\begin{align}
& \text{if $A$ and $B$ are self-adjoint in $\cH$ and $A, B \geq \varepsilon_0 I_{\cH}$,}   \no \\
& \quad \text{with $\big[B^{- \tau_0} - A^{- \tau_0}\big] \in \cB_1(\cH)$ for some 
$\varepsilon_0, \tau_0 > 0$,}    \no \\
& \quad \text{then $\big[B^{- \tau} - A^{- \tau}\big] \in \cB_1(\cH)$ for all $\tau \geq  \tau_0$.} 
\end{align}
(The last assertion also readily follows from a slight variation of Theorem \ref{t6.2} below; in addition, see \cite[Theorem 4]{PS09} and \cite[Lemma 6]{PS10}.) \\
To prove item $(iii)$ we recall the following fact (cf.\ \cite[Sect.\ 8.3]{Ya92}):  
\begin{align}
& \text{If $0 \leq A, B \in \cB(\cH)$, with $[B - A] \in \cB_1(\cH)$, and 
$f \in C^1(\bbR)$ such that}     \no \\
& \quad \text{for some finite complex measure $d m$, $f'$ admits the representation} \no \\  
& \quad f'(\lambda) = \int_{\bbR} dm(t) \, e^{i t \lambda}, \;\; |m|(\bbR) < \infty,  
\, \text{ then } \, [f(B) - f(A)] \in \cB_1(\cH).    \lb{3.9a} 
\end{align}
In particular, this applies to $f \in \cS(\bbR)$ (the Schwartz class) and hence to 
$f \in C_0^{\infty}(\bbR)$ (and more generally, if $f'$ lies in the $L^2$-based Sobolev space, 
$f' \in W^{\alpha,2}(\bbR)$ for some $\alpha > 1/2$, cf., \cite[Corollary~7.9.4]{Ho83}). As a special case of \eqref{3.9a} we now suppose that 
$I_{\cH} \geq A_1^{-1}, B_1^{-1} \in \cB(\cH)$ with $\big[B_1^{-1} - A_1^{-1}\big] \in \cB_1(\cH)$. 
Introducing 
\begin{equation}
g(\lambda,t) = \begin{cases} e^{- 1/\lambda}, & \lambda \in [0, 2/t], \\ 0, & \lambda \leq 0, 
\end{cases}  \quad g(\, \cdot\, , t) \in C_0^{\infty}(\bbR) \text{ real-valued}, \; t>0, 
\end{equation}
one obtains that
\begin{equation}
g\big((t B_1)^{-1},t\big) - g\big((t A_1)^{-1},t\big) = \big[e^{- t B_1} - e^{- t A_1}\big] \in \cB_1(\cH), 
\quad t > 0,  
\end{equation}
employing the fact that $\sigma\big((t A_1)^{-1}\big) \cup \sigma\big((t B_1)^{-1}\big) \subseteq [0, 1/t]$, 
and identifying $f$ with $g$, $A$ with $A_1^{-1}$, and $B$ with $B_1^{-1}$ in \eqref{3.9a}. 
Finally, identifying $S_1 + I_{\cH}$ with $A_1$ and $S_2 + I_{\cH}$ with $B_1$ yields \eqref{3.4a} 
(multiplied by the common factor $e^{-t}$, $t > 0$). 
\end{proof}

We note that the converse to Lemma \ref{l3.1}\,$(iii)$ is false as will be illustrated in 
Example \ref{e3.6}. We also remark that \eqref{3.4a}, in fact, extends to the entire open right 
half-plane as will be proven in Theorem \ref{t3.9}. 

In addition to \eqref{difresolv} we also note that if $S_2$ is relatively trace class 
with respect to $S_1$, that is, if 
\begin{equation}
\dom(S_2) \supseteq \dom(S_1) \, \text{ and } \, 
(S_2 - S_1) (S_1 - z I_{\cH})^{-1} \in \cB_1(\cH), \quad z \in \rho(S_1),  
\end{equation}
then the difference of the resolvents of $S_2$ and $S_1$ can be written in the form
\begin{align} \lb{difresolvrel}
& (S_2 - z I_{\cH})^{-1} - (S_1 - z I_{\cH})^{-1} =  - (S_1 - z I_{\cH})^{-1} 
\big[(S_2 -  S_1) (S_2 - z_0 I_{\cH})^{-1}\big]     \no \\
& \hspace*{4.95cm} \times (S_2 - z_0 I_{\cH}) (S_2 - z I_{\cH})^{-1}  \\
& \quad = - (S_1 - z I_{\cH})^{-1} \big[(S_2 -  S_1) (S_2 - z_0 I_{\cH})^{-1}\big] 
\big[-I_{\cH} + (z - z_0) (S_2 - z I_{\cH})^{-1}\big].   \no 
\end{align}

\begin{remark} \lb{r3.2} 
Thus, the trace class property of 
$\big[(S_2 - z I_{\cH})^{-1} - (S_1 - z I_{\cH})^{-1}\big]$ for all $z \in
\rho(S_1)\cap \rho(S_2)$, is guaranteed by that property being satisfied
for just one point in $\rho(S_1) \cap \rho(S_2)$. However, the analog for 
semigroups does not hold. In other words, if 
$\big[e^{- t_0 S_1} - e^{- t_0 S_2}\big] \in \cB_1(\cH)$ for some $t_0 > 0$, 
it can of course happen that 
$\big[e^{- t S_1} - e^{- t S_2}\big] \notin \cB_1(\cH)$ for $0< t < t_0$. We describe 
an elementary example next: Let $M$ be a self-adjoint operator in $\cH$ 
with purely discrete spectrum 
$\sigma(M) = \{n\}_{n \in \bbN}$, such that the multiplicity of the $n$th eigenvalue equals $n$. Introduce
\begin{equation}
T_1 = M, \; T_2 = (M + I_{\cH}), S_1 = \ln(T_1), \; S_2 = \ln(T_2).
\end{equation} 
Then
\begin{equation}
n \big[(n+1)^{-\beta} - n^{-\beta}\big] + \beta n^{\beta} \underset{n \to \infty} 
= \Oh\big(n^{-1 - \beta}\big), \quad \beta > 0, 
\end{equation}
yields 
\begin{equation}
\big[T_1^{- \beta} - T_2^{- \beta}\big] \in \cB_1(\cH) \, 
\text{ if and only if $\beta > 1$,}
\end{equation}
and hence one infers that 
\begin{equation}
\big[e^{- t S_1} - e^{-t S_2}\big] =
\big[e^{- t \ln(T_1)} - e^{-t \ln(T_2)}\big] = 
\big[T_1^{- t} - T_2^{- t}\big] \in \cB_1(\cH) \, 
\text{ if and only if $t > 1$.}
\end{equation}
$\hfill \Diamond$
\end{remark}

At this point we introduce resolvent and semigroup regularized Witten indices 
as follows: 

\begin{definition} \lb{d3.3} 
Let $T$ be a closed, linear, densely defined operator in $\cH$. \\ 
$(i)$ Suppose that for some $($and hence for all\,$)$ 
$z \in \bbC \backslash [0,\infty)$,  
\begin{equation} 
\big[(T^* T - z I_{\cH})^{-1} - (TT^* - z I_{\cH})^{-1}\big] \in \cB_1(\cH).   \lb{3.1} 
\end{equation}  
Then one introduces the resolvent regularization 
\begin{equation}
\Delta_r(T, \lambda) = (- \lambda) \tr_{\cH}\big((T^* T - \lambda I_{\cH})^{-1}
- (T T^* - \lambda I_{\cH})^{-1}\big), \quad \lambda < 0.       \lb{3.2} 
\end{equation} 
The resolvent regularized Witten index $W_r (T)$ of $T$ is then defined by  
\begin{equation} 
W_r(T) = \lim_{\lambda \uparrow 0} \Delta_r(T, \lambda),      \lb{3.3}
\end{equation}
whenever this limit exists. \\
$(ii)$ Suppose that for some $t_0 > 0$  
\begin{equation}
\big[e^{- t_0 T^* T} - e^{- t_0 TT^*}\big] \in \cB_1(\cH).    \lb{3.4} 
\end{equation} 
Then $\big(e^{-t T^*T} - e^{-t TT^*}\big) \in \cB_1(H)$ for all $t >t_0$ 
$($cf.\ \eqref{3.3a}$)$, and one introduces the semigroup regularization 
\begin{equation}
\Delta_s(T, t) = \tr_{\cH}\big(e^{-t T^*T} - e^{-t TT^*}\big), \quad t > 0.     \lb{3.5}   
\end{equation} 
The semigroup regularized Witten index $W_s (T)$ of $T$ is then defined by  
\begin{equation} 
W_s(T) = \lim_{t \uparrow \infty} \Delta_s(T, t),       \lb{3.6}
\end{equation}
whenever this limit exists.
\end{definition} 

Here, in obvious notation, the subscript ``$r$'' (resp., ``$s$'') indicates the use of 
the resolvent (resp., semigroup or heat kernel) regularization. More generally, 
one could replace the particular pair of self-adjoint operators $(TT^*,T^*T)$ 
in $\Delta_r$, $\Delta_s$ by a general pair of self-adjoint operators 
$(S_2,S_1)$ in $\cH$ and study the correspondingly defined Witten indices 
$W_r(S_2,S_1)$, $W_s(S_2,S_1)$. Since we will eventually focus primarily on the 
Witten index of $\bsD_\bsA^{}$, we refrain from introducing this generalization.  

\begin{remark} \lb{r3.4} 
In general (i.e., if $T$ is not Fredholm), $W_r(T)$ (resp., $W_s(T)$) is not necessarily 
integer-valued; in fact, it can take on any prescribed real number. As a concrete 
example, we mention the two-dimensional  magnetic field system discussed by 
Aharonov and Casher \cite{AC79} which demonstrates that that the 
resolvent and semigroup regularized Witten indices have the meaning of 
(non-quantized) magnetic flux $F \in \bbR$ which indeed can take on any 
prescribed real number (cf.\ the analysis in \cite{An90a}, \cite{BGGSS87}).   
$\Diamond$
\end{remark} 

\begin{remark} \lb{r3.5} 
It was shown in \cite{BGGSS87} and \cite{GS88} that $W_r(T)$ 
(resp., $W_s(T)$) has stability properties with respect to additive perturbations 
analogous to the Fredholm index, as long as one replaces the familiar 
relative compactness assumption on the perturbation in connection with the 
Fredholm index, by appropriate relative trace class conditions in connection with 
the Witten index. In this context we also refer to \cite{CP74}. $\Diamond$
\end{remark} 

A natural question is whether the above definitions of the resolvent and 
semigroup regularized Witten index are in fact equivalent. Example \ref{e3.6} below 
demonstrates that this cannot hold in general. Therefore, the real issue is the following: 
suppose that both the semigroup 
(for sufficiently large $t>0$) and the resolvent differences are trace class, can one 
prove the equivalence of Definition \ref{d3.3}\,$(i)$ and \ref{d3.3}\,$(ii)$? 
We cannot answer this question at the moment in full generality but refer to  
Remark \ref{r3.7}. 

\begin{example} \lb{e3.6}
Let $A$ and $B$ be self-adjoint operators in $\cH$, introduce the $2 \times 2$ 
block matrix operators in $\cH \oplus \cH$ by
\begin{align}
\begin{split} 
T &= \begin{pmatrix} 0 & B \\ A & 0 \end{pmatrix}, \quad \dom(T) = \dom(A) \oplus \dom(B), \\
T^* &= \begin{pmatrix} 0 & A \\ B & 0 \end{pmatrix}, \quad \dom(T) = \dom(B) \oplus \dom(A), 
\end{split} 
\end{align} 
such that
\begin{align}
\begin{split} 
T^* T &= \begin{pmatrix} A^2 & 0 \\ 0 & B^2 \end{pmatrix}, \quad \dom(T^*T) 
= \dom\big(A^2\big) \oplus \dom\big(B^2\big), \\
T T^* &= \begin{pmatrix} B^2 & 0 \\ 0 & A^2 \end{pmatrix}, \quad \dom(T) 
= \dom\big(B^2\big) \oplus \dom\big(A^2\big), 
\end{split} 
\end{align} 
and hence
\begin{align}
& (T^*T - z I_{\cH \oplus \cH})^{-1} - (TT^* - z I_{\cH \oplus \cH})^{-1}    \no \\
& \quad = \big[(A^2 -z I_{\cH})^{-1} - (B^2 - z I_{\cH})^{-1}\big] 
\begin{pmatrix} I_{\cH} & 0 \\ 0 & - I_{\cH}\end{pmatrix}, \quad 
z \in \rho\big(A^2\big) \cap \rho\big(B^2\big),    \lb{3.A} \\
& e^{-t T^*T} - e^{-t TT^*} = \big[e^{-t A^2} - e^{-t B^2}\big] 
\begin{pmatrix} I_{\cH} & 0 \\ 0 & - I_{\cH}\end{pmatrix}, \quad t \geq 0.    \lb{3.B} 
\end{align}
In the remainder of this example, we use following concrete choices for $A$ and $B$,
\begin{align}
\begin{split} 
& A = [-  \Delta + V]^{1/2}, \quad 0 \leq V \in C_0^{\infty} (\bbR^n), \quad 
B = (- \Delta)^{1/2}, \\ 
& \dom(A) = \dom(B) = W^{1,2}(\bbR^n), \; n \in \bbN, 
\end{split} 
\end{align}
with $\cH = L^2(\bbR^n; d^n x)$, $n \in \bbN$, and to avoid trivialities, we assume that there exists an open $($nonempty\,$)$ 
ball in $\bbR^n$ on which $V$ is strictly positive. Then it is known $($cf., e.g., 
\cite[Theorem~21.4]{Si79}$)$ that 
\begin{equation}
\big[e^{-t A^2} - e^{-t B^2}\big] \in \cB_1\big(L^2(\bbR^n; d^n x)\big), 
\quad t \geq 0, \; n \in \bbN.     \lb{3.C} 
\end{equation}
Next, we investigate the resolvent difference of $A^2$ and $B^2$ and hence employ the known 
resolvent equation $($employing $I = I_{L^2(\bbR^n; d^n x)}$ for brevity\,$)$,
\begin{align}
& \big(A^2 - z I\big)^{-1} - \big(B^2 - z I\big)^{-1}     \\
& \quad = - \big(B^2 - z I\big)^{-1} v 
\big[I + v \big(B^2 - z I\big)^{-1} v\big]^{-1} v \big(B^2 - z I\big)^{-1}, \quad 
z \in \bbC \backslash [0,\infty),     \no  
\end{align}
where $V = v^2$, $0 \leq v \in C_0^{\infty}(\bbR^n)$. In particular, 
\begin{equation}
\big(A^2 - z I\big)^{-1} - \big(B^2 - z I\big)^{-1} = - C(z)^* C(z), \quad 
z < 0,     \lb{3.D}
\end{equation}
where we introduced 
\begin{equation}
C(z) = \big[I + v \big(B^2 - z I\big)^{-1} v\big]^{-1/2} v \big(B^2 - z I\big)^{-1}, \quad 
z \in \bbC \backslash [0,\infty). 
\end{equation}
Since 
\begin{equation}
\big[I + v \big(B^2 - z I\big)^{-1} v\big]^{\pm 1/2} \in \cB\big(L^2(\bbR^n; d^n x)\big), 
\quad z < 0, 
\end{equation}
one concludes that given $p \geq 1$, $z < 0$, 
\begin{align} 
C(z) \in \cB_p\big(L^2(\bbR^n; d^n x)\big) \, \text{if and only if } \, 
v \big(B^2 - z I\big)^{-1} \in \cB_p\big(L^2(\bbR^n; d^n x)\big). 
\end{align}
An application of \cite[Theorem~4.1, Proposition~4.4]{Si05} yields 
\begin{equation}
v \big(B^2 - z I\big)^{-1} \in \cB_2\big(L^2(\bbR^n; d^n x)\big), \; z < 0, \, 
\text{ if and only if $1 \leq n \leq 3$,}
\end{equation}
equivalently,
\begin{equation}
C(z) \in \cB_2\big(L^2(\bbR^n; d^n x)\big), \; z < 0, \, \text{ if and only if $1 \leq n \leq 3$.}
\end{equation}
Thus, by \eqref{3.D} $($and a standard application of resolvent identities\,$)$, 
\begin{equation}
\big[\big(A^2 - z I\big)^{-1} - \big(B^2 - z I\big)^{-1}\big] \notin 
\cB_1\big(L^2(\bbR^n; d^n x)\big), \quad z \in \bbC \backslash [0,\infty), \; n \geq 4.      \lb{3.E}
\end{equation}
Choosing $n \geq 4$, combining \eqref{3.A}, \eqref{3.B}, \eqref{3.C}, and \eqref{3.E} yields 
explicit examples  where the semigroup difference of $T^*T$ and $TT^*$ is trace class, 
but their resolvent difference is not. 
\end{example}

\begin{remark} \lb{r3.7} 
Assuming that both the semigroup and the resolvent differences are trace class, then the 
existence of the limit in \eqref{3.6} implies that in \eqref{3.3} by an Abelian theorem for 
Laplace transforms (see, e.g., \cite[Corollary 1a, p.\ 182]{Wi41}). The opposite
implication, however, is not obvious. If $0 \leq S_j$, $j=1,2$, are nonnegative and self-adjoint 
in $\cH$, and  
\begin{equation}
\big[(S_2 - z_0 I_{\cH})^{-1} - (S_1 - z_0 I_{\cH})^{-1}\big] \in \cB_1(\cH) \, 
\text{ for some } \, z_0 \in \rho(S_1) \cap \rho(S_2), 
\end{equation}
then  by Theorem \ref{t3.9} below, $\big(e^{-\cdot S_2}-e^{-\cdot S_1}\big)$ extends to the right
half-plane as a $\cB_1(\cH)$-valued analytic function, and thus
${\tr}_{\cH}\big(e^{-\cdot S_2}-e^{-\cdot S_1}\big)$ extends to the right
complex half-plane as an analytic function too. If 
${\tr}_{\cH}\big(e^{-\cdot S_2}-e^{-\cdot S_1}\big)$ is bounded in some 
sector around the half-line $[0,\infty)$, then by  a Tauberian theorem 
(cf., e.g., \cite[Theorem\ 2.6.4\,(b)]{ABHN01}) one obtains that \eqref{3.3}
implies \eqref{3.6}. This holds, in particular, if 
\begin{equation}
\sup_{t > 0}\frac{1}{t} \int_{0}^{t} |\xi(s; S_2,S_1)|\,ds < \infty,
\end{equation}
see the proof of Theorem \ref{t3.10} below. 

We also note that the following equivalence, 
\begin{align}
\begin{split} 
& \lim_{t \downarrow 0 } {\tr}_{\cH}\big[e^{- t S_2} - e^{- t S_1}\big] \, 
\text{ exists if and only if } \\
& \quad   
\lim_{\lambda \downarrow - \infty} (- \lambda)  
{\tr}_{\cH}\big[(S_2 - \lambda I_{\cH})^{-1} - (S_1 - \lambda I_{\cH})^{-1}\big]
\, \text{ exists}, 
\end{split} 
\end{align}
holds unrestrictedly by \cite[Theorem\ 2.6.4\,(a)]{ABHN01}. 
Finally, we note that it can be shown in a rather straightforward manner, 
and by a different method (see the proof of \cite[Proposition\ 2.1]{BW02}), that
\begin{equation}
\limsup_{t \to\infty}\f{1}{t} \ln \big\|e^{-t S_2}-e^{-t S_1}\big\|_{\cB_1(\cH)} = 0.  
\end{equation}  
$\hfill \Diamond$
\end{remark}

Before we continue this circle of ideas, we recall the known consistency between 
the Fredholm and Witten index whenever $T$ is Fredholm:

\begin{theorem} $($\cite{BGGSS87}, \cite{GS88}.$)$  \lb{t3.8} 
Suppose that $T$ is a Fredholm operator in $\cH$. \\
$(i)$ If \eqref{3.1} holds, then the resolvent regularized Witten index $W_r(T)$ 
exists, equals the Fredholm index, $\ind (T)$, of $T$, and
\begin{equation} 
W_r(T) =  \ind (T) = \xi(0_+; T T^*, T^* T).    \lb{3.7}
\end{equation}
$(ii)$ If \eqref{3.4} holds, then the semigroup regularized Witten index $W_s(T)$ 
exists, equals the Fredholm index, $\ind (T)$, of $T$, and
\begin{equation} 
W_s(T) =  \ind (T) = \xi(0_+; T T^*, T^* T).    \lb{3.8}
\end{equation}
\end{theorem}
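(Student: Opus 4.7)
The plan is to prove both parts via the spectral shift function $\xi(\,\cdot\,;TT^*,T^*T)$ (normalized by $\xi(\lambda) = 0$ for $\lambda<0$), first establishing that $\xi(0_+;TT^*,T^*T) = \ind(T)$ when $T$ is Fredholm, and then extracting the limits in the definitions of $W_r(T)$ and $W_s(T)$ from Krein-type trace formulas.

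The identification $\xi(0_+;TT^*,T^*T) = \ind(T)$ proceeds as follows. Because $T$ is Fredholm, $\dim\ker T$ and $\dim\ker T^*$ are finite and $0$ is either in $\rho(T^*T)$ or an isolated eigenvalue, and similarly for $TT^*$; in either case there exists $\delta > 0$ with $\sigma(T^*T) \cap (0,\delta) = \sigma(TT^*) \cap (0,\delta) = \emptyset$. The polar decomposition $T = U|T|$ yields $TT^* = U\, T^*T\, U^*$, where $U$ is a partial isometry with $U^*U = I_\cH - P_{\ker T}$ and $UU^* = I_\cH - P_{\ker T^*}$, so the restrictions of $T^*T$ and $TT^*$ to the orthogonal complements of their respective kernels are unitarily equivalent. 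Consequently, for a smooth function $f$ with $\supp f \subset (-\infty,\delta)$ compact one has $f(T^*T) = f(0)P_{\ker T}$ and $f(TT^*) = f(0)P_{\ker T^*}$, whence $\tr_\cH(f(TT^*) - f(T^*T)) = -f(0)\,\ind(T)$. Comparing with Krein's trace identity $\int_{\bbR} f'(\lambda)\,\xi(\lambda;TT^*,T^*T)\,d\lambda$ forces $\xi(\lambda;TT^*,T^*T) = \ind(T)$ on $(0,\delta)$.

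For part (i), I invoke the resolvent trace formula (the analog of \eqref{2.28} for the pair $(TT^*,T^*T)$),
\[ \tr_\cH\bigl((TT^* - z I_\cH)^{-1} - (T^*T - z I_\cH)^{-1}\bigr) = -\int_0^\infty \frac{\xi(s;TT^*,T^*T)\,ds}{(s - z)^2}, \quad z \in \bbC \setminus [0,\infty), \]
valid under \eqref{3.1} with $\xi(\,\cdot\,;TT^*,T^*T) \in L^1([0,\infty);(s+1)^{-2}\,ds)$. Setting $z = -\epsilon < 0$ and multiplying by $\epsilon$ gives
\[ \Delta_r(T,-\epsilon) = \int_0^\infty \frac{\epsilon\,\xi(s;TT^*,T^*T)\,ds}{(s+\epsilon)^2}. \]
Since $\epsilon(s+\epsilon)^{-2}\,ds$ is a probability measure on $[0,\infty)$ concentrating at $0$ (a Poisson-type approximate identity), splitting the integral at $s=\delta$ yields an interior contribution $\ind(T)\,[1 - \epsilon/(\delta+\epsilon)] \to \ind(T)$ and a tail of order $O(\epsilon)$, proving \eqref{3.7}. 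For part (ii), Krein's formula applied to $f(\lambda) = e^{-t\lambda}$ gives
\[ \Delta_s(T,t) = t\int_0^\infty e^{-t\lambda}\,\xi(\lambda;TT^*,T^*T)\,d\lambda = \int_0^\infty e^{-u}\,\xi(u/t;TT^*,T^*T)\,du \]
after the substitution $u = t\lambda$. For each $u > 0$, $\xi(u/t;TT^*,T^*T) = \ind(T)$ as soon as $t > u/\delta$, and dominated convergence (using local boundedness of $\xi$ near $0$ together with the $e^{-u}$ decay at infinity) yields the limit $\ind(T)$, establishing \eqref{3.8}.

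The principal obstacle lies in part (ii): justifying Krein's trace formula for $f(\lambda) = e^{-t\lambda}$ under only the semigroup hypothesis \eqref{3.4}, since, as Example \ref{e3.6} shows, the resolvent and semigroup trace class conditions are not equivalent in general. The Fredholm structure however remedies this: the finite-rank contribution $P_{\ker T} - P_{\ker T^*}$ is trace class on its own and contributes exactly $\ind(T)$ to $\tr_\cH(e^{-tT^*T} - e^{-tTT^*})$, while the nonzero-spectrum parts of $T^*T$ and $TT^*$ are unitarily equivalent, so the remaining analysis reduces to a spectral interval bounded away from $0$ where the semigroup trace class condition transfers to the required resolvent trace class condition via functional calculus, in the spirit of Remark \ref{r3.7} and Theorem \ref{t3.9}.
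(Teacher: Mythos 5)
The paper itself does not prove Theorem \ref{t3.8} in full; it cites \cite{BGGSS87}, \cite{GS88} and only sketches the chain of equalities, via the norm-convergent Laurent expansions of $(T^*T-zI_{\cH})^{-1}$ and $(TT^*-zI_{\cH})^{-1}$ around $z=0$ (whose principal parts are the kernel projections divided by $-z$) together with the fact that the nonzero eigenvalues of $T^*T$ and $TT^*$ coincide with multiplicities. Your identification $\xi(0_+;TT^*,T^*T)=\ind(T)$ via the polar decomposition and a compactly supported test function in Krein's formula is the same fact dressed differently, and your part $(i)$ — reading off $W_r(T)$ from the resolvent trace formula by treating $\epsilon(s+\epsilon)^{-2}\,ds$ as an approximate identity concentrating on $(0,\delta)$ where $\xi$ is constant — is correct and complete (it is essentially the computation the paper itself performs later in \eqref{4.34}). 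The only point you gloss over is that $\xi$ is a.e.\ constant on the spectral gap $(0,\delta)$, which you need before the single test function "forces" the value; this follows by varying $f$ over $C_0^\infty((0,\delta))$ and is standard.

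Part $(ii)$, however, contains a genuine gap, and it sits exactly where you located "the principal obstacle." Your main line requires Krein's trace formula for $f(\lambda)=e^{-t\lambda}$, which under \eqref{3.4} alone is not available: the spectral shift function for $(TT^*,T^*T)$ is not even defined by the resolvent route. Your proposed remedy — that after splitting off the finite-rank kernel projections "the semigroup trace class condition transfers to the required resolvent trace class condition via functional calculus" on a spectral interval bounded away from $0$ — is false. The pair in Remark \ref{r3.2} ($S_j=\ln(T_j)$ with $T_1=M$, $T_2=M+I_{\cH}$, $\sigma(M)=\{n\}_{n\in\bbN}$ with multiplicity $n$, shifted by a constant so that both operators are $\geq 1$) has trivial kernels, spectra bounded away from zero, and $\big[e^{-tS_2}-e^{-tS_1}\big]\in\cB_1(\cH)$ for all $t>1$, yet the resolvent difference is not trace class (its trace norm behaves like $\sum_n(\ln n)^{-2}=\infty$). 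The function $\mu\mapsto(1-t_0^{-1}\ln\mu)^{-1}$ that would effect the transfer is not operator Lipschitz at $\mu=0$, so no functional-calculus argument of the kind you invoke can work.

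Two genuine repairs are available. The first stays with the spectral shift function: define $\xi(\,\cdot\,;TT^*,T^*T)$ by the invariance principle applied to $\phi(\lambda)=e^{-t_0\lambda}$ rather than to a resolvent; then for $t\geq 2t_0$ one has $e^{-t\lambda}=g\big(e^{-t_0\lambda}\big)$ with $g(\mu)=\mu^{t/t_0}\in C^2([0,1])$, Krein's formula for the bounded pair $\big(e^{-t_0TT^*},e^{-t_0T^*T}\big)$ yields exactly your Laplace representation of $\Delta_s(T,t)$, and your dominated convergence argument goes through (the tail is bounded by $t\,e^{-(t-t_0)\delta}\int_\delta^\infty e^{-t_0\lambda}|\xi(\lambda)|\,d\lambda$). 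The second, which is the route of \cite{GS88}, avoids the spectral shift function for the limit entirely: the kernel projections contribute $\ind(T)$ to $\Delta_s(T,t)$ for every $t$, and for the remainders $E_j(t)$ (the semigroups compressed to the orthogonal complements of the kernels, so $\|E_j(t)\|_{\cB(\cH)}\leq e^{-\delta t}$) one uses the identity $E_1(2t)-E_2(2t)=E_1(t)[E_1(t)-E_2(t)]+[E_1(t)-E_2(t)]E_2(t)$ and \eqref{3.3a} to drive the trace norm of the difference to zero along $t\to\infty$. Either way, the conclusion \eqref{3.8} holds, but not by the transfer you propose.
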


Here $\xi(\, \cdot \,; S_2, S_1)$ denotes as usual the spectral shift function 
for the pair of self-adjoint operators $(S_2, S_1)$ in $\cH$ and we always use 
the convention,
\begin{equation}
\xi(\lambda ; S_2, S_1) = 0 \, \text{ for $\lambda < \inf(\sigma(S_1) \cup 
\sigma(S_2))$},     \lb{3.9} 
\end{equation}
whenever $S_j$, $j=1,2$, are bounded from below.

We mention, in passing, that the principal reason for the equalities in \eqref{3.7} 
is the fact that $T$ Fredholm is equivalent to $T^*T$ and $TT^*$ Fredholm, 
which in turn is equivalent to the existence of $\varepsilon_0 > 0$ such that 
\begin{equation}
\inf\big(\sigma_{ess} (T^*T)\big) = \inf\big(\sigma_{ess} (TT^*)\big) 
= \varepsilon_0 > 0.
\end{equation}
This implies the existence of norm convergent Laurent expansions of the 
resolvents of $(T^*T - z I_{\cH})^{-1}$ and $(TT^* - z I_{\cH})^{-1}$, respectively, 
around $z=0$ (see, e.g., \cite[Sect.\ 10.2]{Ba85}, \cite[Sect.\ III.6.5]{Ka80}) whose 
principal part consists of 
$P_{T^*T}(\{0\}) z^{-1}$ and $P_{TT^*}(\{0\}) z^{-1}$ (with $P_S(\{0\})$ the Riesz 
or spectral projection of the self-adjoint operator $S$ in $\cH$ associated with 
the isolated spectral point $\{0\}$; we do not exclude the possibility that  
$P_S(\{0\}) = 0$). Thus, one obtains 
\begin{align}
\begin{split}
\ind(T) = \ind(T^*T) &= \tr_{\cH}\big(P_{T^*T}(\{0\})\big) 
- \tr_{\cH}\big(P_{TT^*}(\{0\})\big)   \\ 
&= \xi(0_+; TT^*, T^*T),   
\end{split} 
\end{align}
since (cf.\ \eqref{3.9}) 
\begin{equation}
\xi(\lambda; TT^*, T^*T) = \begin{cases} \xi(0_+; TT^*, T^*T)  
& \text{for a.e.\ $\lambda \in (0,\varepsilon_0)$,}   \\
0 & \text{for $\lambda < 0$,} 
\end{cases} 
\end{equation}
as $\xi(\, \cdot \,; TT^*, T^*T)$ is piecewise constant a.e.\ below the infimum of the essential 
spectrum of $TT^*$ (reps., $T^*T$) by \cite[Proposition\ 8.2.8 and p.\ 300]{Ya92} and the 
fact that nonzero eigenvalues of $T T^*$ and $T^* T$ coincide and also their multiplicities 
coincide (cf.\ \cite{De78}). 

In the remainder of this section we develop a Laplace transform approach
to the study of $\xi(\,\cdot\,, S_2,S_1)$, still assuming that $0 \leq S_j$, $j=1,2$, 
are self-adjoint nonnegative in $\cH$. We will distinguish between the resolvent comparable case \eqref{difresolv} and the relatively trace class case 
\eqref{difresolvrel} since the trace norm asymptotics as $|z| \to \infty$ and 
$|z| \to 0$ of the difference of resolvents behaves differently in each of these cases.

For $\varphi \in (0, \pi/2)$ we introduce the sector  
\begin{equation} 
S_{\varphi} = \{z \in \bbC \,|\, |\arg (z)|<\varphi\},    \lb{3.39}
\end{equation} 
and for notational simplicity we frequently use the abbreviations 
\begin{align}
R_j (z) &= (S_j - z I_{\cH})^{-1}, \quad z \in \rho(S_j), \; j=1,2,    \\
\Delta_R(S_2,S_1;z) &= (S_2 - z I_{\cH})^{-1} - (S_1 - z I_{\cH})^{-1}. 
\end{align}

First, we note that by \eqref{difresolv}
\begin{align} \lb{estimrescomp}
& \|\Delta_R(S_2,S_1;z)\|_{\cB_1(\cH)} =\begin{cases}{\rm O} (1),& \arg (z)={\rm const}, \; 
|z| \to \infty,  \\
 {\rm O}(|z|^{-2}),& \arg (z)={\rm const}, \; |z| \to 0,
\end{cases}   \no \\
& \quad \text{in the resolvent comparable case,}
\end{align}
while by \eqref{difresolvrel}, 
\begin{align} \lb{estimrescomp1}
& \|\Delta_R(S_2,S_1;z)\|_{\cB_1(\cH)} = {\rm
O}\left(|z|^{-1}+|z|^{-2}\right),     \quad 
\arg (z)={\rm const}\neq 0,   \no \\
& \quad \text{$|z| \to \infty$ or $|z| \to 0$, \, in the relatively trace class case.}
\end{align}
Unfortunately, the  estimate  \eqref{estimrescomp} is rather poor at infinity
since we did not obtain any decay there.

To overcome this problem in the resolvent comparable case we will 
now pass to a related expression with
somewhat better estimates. First, one notes that $\Delta_R (S_2,S_1; \, \cdot\,)$ is a 
$\cB_1(\cH)$-valued function analytic in $\bbC \backslash  [0,\infty)$. The 
analyticity  of $\Delta_R(S_2,S_1;\, \cdot \,)$ can be proved in several ways. For
instance, it suffices to note that the set of rank-one operators
is contained in $\cB(\cH)=(\cB_1(\cH))^*$ and separates
$\cB_1(\cH)$  (see, e.g., \cite[p. 335]{GLMST11}). Then the
analyticity follows from \cite[Theorem A7]{ABHN01}, see also
\cite{AN00}. Therefore, one can differentiate $\Delta_R$ in the
trace norm and the trace norm of the derivative can be estimated
as
\begin{align} \lb{resolsquare}
& \big\|\big(\Delta_R (S_2,S_1;z)\big)' \big\|_{\cB_1(\cH)} = 
\big\|R_2(z)^2 - R_1(z)^2\big\|_{\cB_1(\cH)}   \no \\
& \quad = \|R_2(z)[R_2(z) - R_1(z)] - [R_1(z) - R_2(z)]R_1(z)\|_{\cB_1(\cH)} \no\\
& \quad = {\rm O}\left(|z|^{-3} + |z|^{-1}\right ), \quad \arg (z)= {\rm
const}\neq 0, \; |z| \to \infty \; \text{or} \; |z| \to 0.     
\end{align}

The following result contains an important improvement over Lemma \ref{l3.1}\,$(iii)$: 

\begin{theorem} \lb{t3.9}
Suppose that $0 \leq S_j$, $j=1,2$, are nonnegative, self-adjoint operators 
in $\cH$ and assume that 
\begin{equation} 
\big[(S_2 - z_0 I_{\cH})^{-1} - (S_1 - z_0 I_{\cH})^{-1}\big] \in \cB_1(\cH)  
\, \text{ for some } \, z_0 \in \rho(S_1) \cap \rho(S_2).   
\end{equation} 
Then
\begin{equation}
\big[e^{- z S_2} - e^{- z S_1}\big] \in \cB_1(\cH), \quad \Re(z) >0, 
\end{equation}
and 
\begin{equation}
\tr_{\cH}\big(e^{- \cdot S_2} - e^{- \cdot S_1}\big) \, \text{ is analytic in $\Re(z) >0$.} 
\end{equation} 
Moreover, for each $\varphi \in (0,\pi/2)$, there exists
$C_\varphi>0$ such that
\begin{equation} \lb{3.42}
\big\|e^{- z S_2} - e^{- z S_1}\big\|_{\cB_1(\cH)} \leq C_\varphi
(|z|^{-1}+|z|), \quad  z \in S_{\varphi}.
\end{equation}
\end{theorem}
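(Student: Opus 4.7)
My approach is based on a Dunford--Riesz type contour integral representation for $e^{-zS_j}$, combined with the trace-class bounds on the resolvent difference from \eqref{estimrescomp}.

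Fix $\varphi \in (0, \pi/2)$ and choose $\varepsilon \in (0, \pi/2 - \varphi)$. For each parameter $\delta > 0$, let $\Gamma_\delta$ be the keyhole contour consisting of the upper ray $\{r e^{i\varepsilon} : r \geq \delta\}$ oriented from $\infty$ inward, the arc $\{\delta e^{i\phi} : \phi \in [\varepsilon, 2\pi - \varepsilon]\}$ traversed with $\phi$ increasing, and the lower ray $\{r e^{-i\varepsilon} : r \geq \delta\}$ oriented outward. Since $\sigma(S_j) \subseteq [0, \infty)$, $\Gamma_\delta$ lies in $\rho(S_1) \cap \rho(S_2)$. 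Closing $\Gamma_\delta$ by a large arc in the right half-plane $\{\Re(\mu) > 0\}$, on which $|e^{-z\mu}|$ decays exponentially for $z \in S_\varphi$, the residue theorem combined with the spectral theorem yields
\[
e^{-zS_j} = \frac{1}{2\pi i} \int_{\Gamma_\delta} e^{-z\mu} (\mu I_\cH - S_j)^{-1}\, d\mu, \quad j = 1, 2, \; z \in S_\varphi,
\]
and subtraction gives
\[
e^{-zS_2} - e^{-zS_1} = -\frac{1}{2\pi i} \int_{\Gamma_\delta} e^{-z\mu}\, \Delta_R(S_2, S_1; \mu)\, d\mu.
\]

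Using the identity \eqref{difresolv} together with the elementary bound $\|R_j(\mu)\|_{\cB(\cH)} \leq 1/\dist(\mu, [0, \infty))$ and the hypothesis $\Delta_R(S_2, S_1; z_0) \in \cB_1(\cH)$, I derive along $\Gamma_\delta$ a uniform bound of the form $\|\Delta_R(S_2, S_1; \mu)\|_{\cB_1(\cH)} \leq C_\varepsilon \max(1, |\mu|^{-2})$. Combined with the exponential decay $|e^{-z\mu}| \leq \exp(-r |z| \cos(\arg z \pm \varepsilon))$ on the rays (noting $\cos(\arg z \pm \varepsilon) \geq \cos(\varphi + \varepsilon) > 0$), this shows the contour integral converges absolutely in $\cB_1(\cH)$-norm. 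Hence $[e^{-zS_2} - e^{-zS_1}] \in \cB_1(\cH)$ for $\Re(z) > 0$, and differentiation under the integral sign on compact subsets of the right half-plane (justified by a uniform integrable majorant) yields $\cB_1(\cH)$-analyticity of $z \mapsto e^{-zS_2} - e^{-zS_1}$ and, by continuity of $\tr_\cH$, analyticity of $z \mapsto \tr_{\cH}(e^{-zS_2} - e^{-zS_1})$.

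For the sharper estimate \eqref{3.42}, I let $\delta$ depend on $|z|$. Splitting $\Gamma_\delta$ into its three pieces, the rays contribute at most $C_\varphi (\delta^{-1} + |z|^{-1})$, the first term coming from $\int_\delta^1 r^{-2}\, dr$ and the second from $\int_1^\infty e^{-r c |z|}\, dr$ with $c = \cos(\varphi + \varepsilon) > 0$, while the arc contributes at most $C_\varphi \delta^{-1} e^{|z| \delta}$, from arc length $\sim \delta$, the maximal exponential factor $e^{|z| \delta}$ (attained near $\arg \mu = \pi - \arg z$), and trace-norm bound $\leq C \delta^{-2}$. Choosing $\delta = 1$ for $|z| \leq 1$ produces a total of order $|z|^{-1}$; choosing $\delta = |z|^{-1}$ for $|z| \geq 1$ balances the two contributions and produces a total of order $|z|$. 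In either case the bound is $C_\varphi(|z|^{-1} + |z|)$.

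The principal technical obstacle is the blow-up $\|\Delta_R(S_2, S_1; \mu)\|_{\cB_1(\cH)} = O(|\mu|^{-2})$ as $\mu \to 0$, which prevents shrinking $\delta$ to zero, while the factor $e^{|z| \delta}$ on the arc forces $\delta \lesssim |z|^{-1}$ for large $|z|$. These two requirements are compatible only through a $z$-dependent choice of contour, and this is precisely what produces the asymmetric two-term bound $|z|^{-1} + |z|$ rather than a single decay or growth rate.
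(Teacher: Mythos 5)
Your proof is correct, and it follows the same basic strategy as the paper's: a Dunford--Riesz contour representation of $e^{-zS_j}$ over a keyhole contour whose inner radius is taken of order $|z|^{-1}$ for large $|z|$, combined with the trace-norm bounds on $\Delta_R(S_2,S_1;\mu)$ that follow from \eqref{difresolv}. The one genuine difference is that you integrate $e^{-z\mu}\Delta_R(S_2,S_1;\mu)$ directly, whereas the paper first integrates by parts in the contour integral (cf.\ \eqref{representf}) so as to replace $\Delta_R(\lambda)$ by $z^{-1}\big(R_2(\lambda)^2-R_1(\lambda)^2\big)$, whose trace norm decays like $|\lambda|^{-1}$ at infinity (cf.\ \eqref{resolsquare}); the stated motivation is that \eqref{estimrescomp} gives no decay at infinity for $\Delta_R$ in the resolvent comparable case. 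Your version shows that this extra step is dispensable: on the rays the factor $|e^{-z\mu}|\le e^{-cr|z|}$ with $c=\cos(\varphi+\varepsilon)>0$ already renders the $O(1)$ part of the integrand integrable with total mass $O(|z|^{-1})$, the $O(r^{-2})$ part near the origin integrates to $O(\delta^{-1})$, and the arc contributes $O\big(\delta^{-1}e^{|z|\delta}\big)$, so the same $z$-dependent choice of $\delta$ yields \eqref{3.42}. In effect you trade the paper's $|\lambda|^{-3}$ singularity at the origin for the milder $|\lambda|^{-2}$ one, at the price of leaning on the exponential factor at infinity; both roads give $C_\varphi\big(|z|^{-1}+|z|\big)$, and indeed the paper itself uses exactly your direct route (``omitting the integration by parts step'') in the relatively trace class case at the end of its proof. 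In a full write-up you should cite the representation \eqref{semigroup} as the standard formula for the bounded analytic semigroup generated by $-S_j$ rather than appeal to a residue computation, and note explicitly that the integral is independent of $\delta$ and of the ray angle by Cauchy's theorem, which is what legitimizes letting the contour depend on $|z|$.
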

\begin{proof}
Let us first assume that $S_2$ and $S_1$ are $\cB_1(\cH)$ resolvent 
comparable. Let
\begin{equation}
F(z):= - z \big[e^{- z S_2} - e^{- z S_1}\big], \quad \Re(z) >0,
\end{equation}
and fix $\gamma \in (0,\pi/2)$ and $\varepsilon \in
(0,(\pi/2)-\gamma)$.  Define a clockwise oriented path
$\Gamma=\Gamma(\gamma,\delta)$ consisting of three pieces
$\Gamma_0, \Gamma_{\pm}$:
\begin{equation}
\Gamma_{\pm}= \big\{re^{i\pm \gamma} \,\big|\, r \geq \delta \big\},  \quad  
\Gamma_0= \big\{\delta e^{i\theta} \,\big|\, \gamma \leq \theta \leq 2\pi -
\gamma\big\}.
\end{equation}
We note the well-known representation
\begin{equation} \lb{semigroup}
e^{- z S_j} = \frac{1}{2\pi i}\int_{\Gamma} d \lambda \, e^{-\lambda z} 
R_j(\lambda),  \quad \Re(z) >0, \; j=1,2, 
\end{equation}
where the integral converges absolutely in the uniform topology of 
$\cB(\cH)$
and does not depend on $\delta$ and $\gamma$ by Cauchy's theorem.
Then integrating the above equality by parts one obtains
\begin{equation}
z e^{- z S_j}=\frac{1}{2\pi i} \int_{\Gamma} d\lambda \, e^{-\lambda z} 
R_j(\lambda)^2,  \quad \Re(z) > 0, \; j=1,2, 
\end{equation}
and thus after subtraction
\begin{equation} \lb{representf}
F(z)=\frac{1}{2\pi i}\int_{\Gamma} d\lambda \, e^{-\lambda z} 
(\Delta_R(S_2,S_1))'(\lambda), \quad \Re(z) > 0,
\end{equation}
with the integral converging absolutely in the uniform topology of
$\cB(\cH)$ (although, for our purpose, the strong topology would be
sufficient). Equality \eqref{representf} can also be checked 
directly by taking the Laplace transform on both sides and using
the uniqueness theorem for Laplace transforms. One notes that the
function
\begin{equation}
(\Delta_R (S_2,S_1))': \begin{cases} 
\bbC \backslash  [0,\infty) \to \cB_1(\cH), \\
z \mapsto R_2(z)^2 - R_1(z)^2,    
\end{cases} 
\end{equation}
is clearly measurable since it is analytic on 
$\bbC \backslash  [0,\infty)$ as mentioned above.

Due to the exponential factor in \eqref{representf}, the integral
in \eqref{representf} converges absolutely as a $\cB_1(\cH)$-valued
integral. Thus, it defines a $\cB_1(\cH)$-valued function analytic
in $\Re(z) > 0$.  To prove analyticity it suffices to observe
that due to \eqref{estimrescomp} one can differentiate under the
integral sign. (Alternatively, one can use Morera's theorem.) Since
taken in the uniform topology of $\cB(\cH)$, the integral in 
\eqref{representf} coincides
with $F(z)$, it yields the same result while considered in the 
$\cB_1(\cH)$-norm. Thus, $F$ is a $\cB_1(\cH)$-valued function, 
analytic in $\bbC\backslash  [0,\infty)$. We then proceed
with estimates of $\| F\|_{\cB_1(\cH)}$  in the right complex half-plane.
For this purpose we estimate the integrand in \eqref{representf} on
each of the pieces $\Gamma_0$, $\Gamma_{\pm}$ separately using
 \eqref{resolsquare}.
Our reasoning follows a classical argument given in the proof of
\cite[Theorem 2.6.1]{ABHN01} closely:

If $z \in S_{(\pi/2)-\gamma -\varepsilon}$ and $\lambda \in \Gamma$ then, since $-(\pi/2)+\varepsilon \leq \arg (z) \pm \gamma \leq (\pi/2)-\varepsilon$, we have
\begin{equation}
\Re(\lambda) z = r |z| \cos(\arg (z)\pm \gamma) \geq r|z|\sin(\varepsilon),
\end{equation}
and therefore, 
\begin{equation}
\| e^{-\lambda z} (\Delta_R(S_2,S_1))'(\lambda)\|_{\cB_1(\cH)} 
\leq e^{-r|z|\sin (\varepsilon)}\left(\frac{1}{r}+\frac{1}{r^3}\right), \quad z \in \Gamma_{\pm}.
\end{equation}
Next, choose $\delta=1/|z|$. 
Then on $\Gamma_0$ we have $|\lambda|=\frac{1}{|z|}e^{i\theta}$, $\theta \in (-\gamma,\gamma)$, 
and hence
\begin{align} \lb{gamma0}
\begin{split} 
& \bigg\|\int_{\Gamma_0} d\lambda \, e^{-\lambda z} (\Delta_R(S_2,S_1))'(\lambda) 
\bigg\|_{\cB_1(\cH)} \leq C_0 |z|^2\int_{\gamma}^{2\pi - \gamma} 
d\theta \, e^{-\cos(\arg (z) +\theta)}   \\
& \quad \leq C_0 \frac{2\pi}{e} |z|^2, 
\end{split}
\end{align}
for some $C_0 > 0$, independent of $\gamma, \varepsilon$ and $\delta$. 

If $\lambda \in \Gamma_{\pm}$ then $\lambda=r e^{\pm i \gamma}$, 
$r > 0$, and
\begin{align} \lb{gammapm}
&\bigg\|\int_{\Gamma_{\pm}} d\lambda \, e^{-\lambda z} (\Delta_R(S_2,S_1))'(\lambda)
\bigg\|_{\cB_1(\cH)}
\le\int_{1/|z|}^{\infty} dr \, e^{-r|z|\sin (\varepsilon)}
\bigg(\frac{1}{r}+\frac{1}{r^3}\bigg)        \no \\
& \quad \leq \bigg(\int_{1}^{\infty}\frac{du \, e^{-u\sin(\varepsilon)}}{u} + |z|^2 
\int_{1}^{\infty}\frac{du \, e^{-u\sin(\varepsilon)}}{u^3}\bigg)   \no \\
& \quad \leq \bigg(\int_{\sin(\varepsilon)}^1 \f{dw \, e^{-w}}{w} + \int_1^{\infty} \f{dw \, e^{-w}}{w} 
+ |z|^2 \int_{1}^{\infty}\frac{du}{u^3}\bigg)   \no \\
& \quad \leq \ln(1/\sin(\varepsilon)) + C_1 + 2^{-1} |z|^2, 
\end{align}
for some $C_1>0$, independent of $\gamma, \varepsilon$ and $\delta$. Hence, by 
\eqref{gamma0} and \eqref{gammapm} one concludes that
\begin{equation}
\|F(z)\|_{\cB_1(\cH)} \leq 
\bigg(\frac{C_0}{e} + \frac{1}{2 \pi}\bigg)|z|^2 + \bigg(\frac{\ln(1/\sin(\varepsilon))}{\pi} 
+ \f{C_1}{\pi}\bigg),
\quad z \in S_{(\pi/2)-\gamma-\varepsilon},
\end{equation}
for $\gamma \in (0,\pi/2)$ and $\varepsilon \in
(0,(\pi/2)-\gamma)$.  By arranging $\varphi=
(\pi/2)-\gamma-\varepsilon$ for appropriate $\gamma$ and $\varepsilon$, this implies \eqref{3.42}. 

If $S_2$ is relatively trace class with respect to
$S_1$ then one can repeat the above reasoning using merely
\eqref{semigroup} and omitting the integration by parts step.
\end{proof}

Due to its importance, we also offer an alternative proof based on real variable methods of the estimate \eqref{3.42} in Appendix \ref{sB}. 

\begin{remark} \lb{r3.10}
The result of Theorem \ref{t3.9} is sharp in the sense that it fails when $\varphi=\pi/2$. Indeed, consider 
two diagonal operators  $S_0=\{2\pi n\}_{n\geq 0}$ and $S_1:=\{2\pi n+\pi\}_{n\geq 0}$. 
Clearly, $e^{iS_0}=1$ and $e^{iS_1}=-1$. Thus, the difference $e^{iS_0}- e^{iS_1}=2 I_{\cH}$ 
is not even a compact operator (assuming $\dim(\cH) = \infty$). On the other hand, a direct 
computation yields that 
$\big[(S_0 + I_{\cH})^{-1} -(S_1 + I_{\cH})^{-1}\big] \in \cB_1(\cH)$. 

In this context we note that the $\varphi$-dependence of $C_{\varphi}$ in \eqref{3.42} 
is rather intricate (and definitely not uniform with respect to $\varphi$): Indeed, it is well-known that 
the trace ideal $\cB_1(\cH)$ possesses the so-called Fatou property (see, e.g., 
\cite[Theorem\ 2.7\,(d)]{Si05}). In particular, if a sequence of operators 
$\{T_n\}_{n\in\bbN} \subset \cB_1(\cH)$ converges to an operator $T \in \cB(\cH)$ weakly as 
$n \to \infty$, and $\sup_{n \in \bbN} \|T_n\|_{\cB_1(\cH)} < \infty$, then $T \in \cB_1(\cH)$. Since 
for every self-adjoint operator $S$ in $\cH$, 
$\lim_{\varepsilon \downarrow 0}\big\|e^{(\varepsilon+i)S} - e^{i S}\big\| = 0$, the difference 
$e^{(\varepsilon+i) S_0} - e^{(\varepsilon+i) S_1}$ converges in $\cB(\cH)$-norm to 
$e^{i S_0} - e^{i S_1}$ as $\varepsilon \downarrow 0$. Since the latter operator is not trace class 
(in fact, it is not even compact) if $\dim(\cH) = \infty$ by the above argument, one infers that 
$\sup_{\varepsilon \to 0} \big\|e^{(\varepsilon+i)S_0} - e^{(\varepsilon+i) S_1}\big\|_{\cB_1(\cH)}= \infty$, 
thus showing the absence of a uniform bound on $C_{\varphi}$. (The absense of a uniform bound on 
$C_{\varphi}$ can also be shown directly for the concrete pair of diagonal operators $S_1$ and $S_2$ 
above. However, we preferred an abstract argument which might apply in other situations as well.) 
$\Diamond$
 \end{remark}

It is easy to see that $\big(e^{- z S_1}\big)_{\Re(z) >0}$ and
$\big(e^{- z S_2}\big)_{\Re(z)>0}$ are semigroups which are bounded
and analytic in $\Re(z) > 0$. In the next theorem we express the trace of 
$\big[e^{- z S_2} - e^{- z S_1}\big]$, $\Re(z) > 0$, in terms of
the Laplace transform of $\xi(\,\cdot\,; S_2, S_1)$ and provide a
``heat semigroup'' formula for the Witten index. We denote 
\begin{equation} 
\Xi(r; S_2,S_1)=\int_{0}^{r}\xi(s; S_2, S_1)\,ds, \quad r >0. 
\end{equation}

\begin{theorem} \lb{t3.10}
Suppose that $0 \leq S_j$, $j=1,2$, are nonnegative, self-adjoint operators 
in $\cH$ and assume that 
\begin{equation} \lb{traceclassasump}
\big[(S_2 - z_0 I_{\cH})^{-1} - (S_1 - z_0 I_{\cH})^{-1}\big] \in \cB_1(\cH)  
\, \text{ for some } \, z_0 \in \rho(S_1) \cap \rho(S_2),     
\end{equation}
and that 
\begin{equation} \lb{integrabilityxi}
\int_{0}^{\infty}\frac{|\xi (s; S_2, S_1)|\, ds}{s + 1} < \infty.
\end{equation}
Then the following items $(i)$ and $(ii)$ hold: \\ 
$(i)$ For every $\Re(z) > 0$ one has
\begin{equation} \lb{laplacetrans}
{\tr}_{\cH}\big(e^{-z S_2} - e^{- z S_1} \big) = 
- z \int_{0}^{\infty} \xi (s; S_2, S_1)\, ds \, e^{-zs}.
\end{equation}
$(ii)$ If $0$ is a right Lebesgue point of $\xi (\, \cdot \,; S_2, S_1)$ $($cf.\ our discussion 
in Appendix \ref{sA}$)$, then
\begin{equation} \lb{witten}
\lim_{z \to \infty} {\tr}_{\cH} \big(e^{-z S_2} - e^{- z S_1} \big) 
= - \Xi'(0_+; S_2,S_1) = - \Lxi(0_+; S_2,S_1), 
\end{equation}
uniformly for $z$ in any sector $S_\varphi$, $\varphi \in (0,\pi/2)$. 
Moreover,
\begin{equation} \lb{derwitten}
\lim_{z \to \infty} \frac{d^n}{dz^n} {\tr}_{\cH} \big(e^{- z S_2} - 
e^{- z S_1}\big)=0,\quad n \in \bbN,
\end{equation}
uniformly for $z$ in any sector $S_\varphi$, $\varphi \in (0,\pi/2)$. 
\end{theorem}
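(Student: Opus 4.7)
My approach to part (i) is to apply the Krein trace formula in its form for nonnegative self-adjoint operators with trace-class resolvent difference to the function $\phi(\lambda) = e^{-z\lambda}$, $\Re(z) > 0$; the trace class property $\big[e^{-zS_2}-e^{-zS_1}\big] \in \cB_1(\cH)$ already supplied by Theorem \ref{t3.9} makes the left-hand side of \eqref{laplacetrans} well defined, while \eqref{integrabilityxi} combined with the exponential factor in $\phi'(\lambda) = -ze^{-z\lambda}$ renders $\int_{\bbR} \phi'(\lambda)\xi(\lambda;S_2,S_1)\,d\lambda$ absolutely convergent. Since $\xi(\cdot\,;S_2,S_1)$ vanishes on $(-\infty,0)$, this produces \eqref{laplacetrans} directly. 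As an alternative, one may deduce \eqref{laplacetrans} from Fubini and a residue computation applied to the contour-integral representation $e^{-zS_j} = (2\pi i)^{-1}\int_\Gamma e^{-\lambda z}(S_j - \lambda I_{\cH})^{-1}d\lambda$ already used in the proof of Theorem \ref{t3.9}, combined with the Krein resolvent trace formula ${\tr}_\cH\big((S_2-\lambda I_{\cH})^{-1}-(S_1-\lambda I_{\cH})^{-1}\big) = -\int_0^\infty \xi(s;S_2,S_1)(s-\lambda)^{-2}\,ds$.

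For part (ii), I would introduce $\Xi(s) = \int_0^s \xi(u;S_2,S_1)\,du$ and integrate by parts in \eqref{laplacetrans}. The a~priori bound $|\Xi(s)| \leq (s+1)\,\|\xi(\cdot\,;S_2,S_1)\|_{L^1((u+1)^{-1}du)}$, which follows from \eqref{integrabilityxi}, makes the boundary contribution at infinity vanish for $\Re(z)>0$, giving
\begin{equation*}
{\tr}_{\cH}\big(e^{-zS_2}-e^{-zS_1}\big) = -z^2\int_0^\infty e^{-zs}\Xi(s)\,ds.
\end{equation*}
Writing $\Xi(s) = \Lxi(0_+;S_2,S_1)\,s + \rho(s)$, the right-Lebesgue-point assumption gives $|\rho(s)| \leq \int_0^s |\xi(u;S_2,S_1) - \Lxi(0_+;S_2,S_1)|\,du = s\,\epsilon(s)$ with $\epsilon(s) \downarrow 0$ as $s\downarrow 0$. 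Using the elementary identity $z^2\int_0^\infty s\,e^{-zs}\,ds = 1$, valid for $\Re(z)>0$, the proof of \eqref{witten} reduces to establishing $z^2\int_0^\infty e^{-zs}\rho(s)\,ds \to 0$ uniformly on each sector $S_\varphi$, $\varphi \in (0,\pi/2)$.

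The principal obstacle is this sectorial uniformity, and the natural tactic is to split the integral at the scale $s_0 := |z|^{-1/2}$. On $[0,s_0]$, the bound $|\rho(s)| \leq s\,\epsilon(s_0)$ yields a contribution at most $|z|^2\epsilon(s_0)\int_0^\infty s\,e^{-s|z|\cos\varphi}\,ds = \epsilon(s_0)/\cos^2\varphi \to 0$ as $|z|\to\infty$. On $(s_0,\infty)$, the global bound $|\rho(s)|\leq C(s+1)$ combined with $|e^{-zs}| \leq e^{-s|z|\cos\varphi}$ produces a polynomial prefactor in $|z|$ multiplied by $e^{-|z|^{1/2}\cos\varphi}$, which decays super-polynomially; this yields \eqref{witten}. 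Finally, \eqref{derwitten} follows from a Cauchy estimate: setting $F(z) := {\tr}_{\cH}\big(e^{-zS_2}-e^{-zS_1}\big) + \Lxi(0_+;S_2,S_1)$, which is analytic in $\{\Re(z)>0\}$ by Theorem \ref{t3.9} and tends to $0$ uniformly on every closed subsector, the inequality $|F^{(n)}(z)| \leq n!\,R^{-n}\sup_{|w-z|=R}|F(w)|$ with $R = |z|\sin\delta$ chosen so that the disk sits inside $S_{\varphi+\delta} \subset S_{\pi/2}$ delivers $F^{(n)}(z) \to 0$ as $|z|\to\infty$ uniformly on $S_\varphi$.
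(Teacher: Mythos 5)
Your part (i) is essentially the paper's argument (the paper verifies \eqref{laplacetrans} for real $z>0$ via the Krein trace formula and then extends to $\Re(z)>0$ by analyticity of both sides, the left-hand side being analytic thanks to Theorem \ref{t3.9}; applying the trace formula directly for complex $z$, or the contour-integral route, amounts to the same thing). For part (ii), however, you take a genuinely different and more hands-on route. The paper first obtains the limit along the positive real axis by citing an Abelian theorem for Laplace--Stieltjes transforms (Widder), then proves the bound $|{\tr}_{\cH}(e^{-zS_2}-e^{-zS_1})|\leq C|z|/\Re(z)$ using $\sup_{t>0}t^{-1}\int_0^t|\xi(s)|\,ds<\infty$ (which follows from the Lebesgue-point hypothesis together with \eqref{integrabilityxi}), and finally upgrades real-axis convergence to sectorial uniform convergence --- and gets \eqref{derwitten} simultaneously --- by Vitali's theorem applied to $f(n\,\cdot)$ on an annular sector. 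You instead integrate by parts to reach $-z^2\int_0^\infty e^{-zs}\Xi(s)\,ds$, split $\Xi(s)=\Lxi(0_+)s+\rho(s)$, and prove $z^2\int_0^\infty e^{-zs}\rho(s)\,ds\to0$ directly by cutting at $s_0=|z|^{-1/2}$, then deduce \eqref{derwitten} from Cauchy's estimates on disks of radius $|z|\sin\delta$. Both arguments are correct and rest on the same two inputs (the a priori bound $|\Xi(s)|\leq C(s+1)$ from \eqref{integrabilityxi} and the Lebesgue-point control of $\Xi$ near $0$); yours is more elementary and quantitative, giving an explicit rate in terms of $\epsilon$, while the paper's is softer and shorter once the Abelian and Vitali theorems are granted. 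One small repair: $\epsilon(s)=s^{-1}\int_0^s|\xi(u)-\Lxi(0_+)|\,du$ tends to $0$ but need not be monotone, so on $[0,s_0]$ you should bound $|\rho(s)|\leq s\,\sup_{0<u\leq s_0}\epsilon(u)$, the supremum still tending to $0$ as $s_0\downarrow0$; this does not affect the conclusion.
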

\begin{proof} 
To prove item $(i)$, one notes that by Theorem \ref{t3.9}, 
$\big[e^{-\cdot S_2} - e^{-\cdot S_1}\big]$ is a $\cB_1(\cH)$-valued
function analytic in $\Re(z) > 0$. Since the trace is a
continuous functional on $\cB_1(\cH)$, 
${\tr}_{\cH} \big(e^{- \cdot S_2} - e^{- \cdot S_1}\big)$ is an
analytic function in $\Re(z) > 0$ as well. As
\eqref{laplacetrans} holds for $z > 0$, and both sides
of \eqref{laplacetrans} are functions analytic in $\Re(z) > 0$,
the identity \eqref{laplacetrans} is satisfied for $\Re(z) > 0$ by the
uniqueness theorems for analytic functions.

To justify item $(ii)$, one first observes that
\begin{equation}
\frac{1}{t} {\tr}_{\cH}\big(e^{- t S_2} - e^{- t S_1}\big) 
= - \int_{0}^{\infty} \xi (s; S_2, S_1)\, ds \, e^{-ts} 
= - \int_{0}^{\infty}d \, \Xi (s; S_2,S_1) \, e^{-ts}.
\end{equation}
By the hypothesis that $0$ is a right Lebesgue point of $\xi (\, \cdot \,; S_2, S_1)$, 
one obtains (cf.\ \eqref{5.7}) that 
\begin{equation}
\lim_{r \downarrow 0+} \frac{\Xi (r; S_2,S_1)}{r} = \Xi'(0_+; S_2,S_1) = \Lxi(0_+; S_2,S_1) 
\end{equation} 
exists and then an Abelian theorem for Laplace transforms (cf.\ 
\cite[Theorem\ 1, p.\ 181]{Wi41}) yields that 
\begin{align} 
\begin{split} 
& \lim_{r \downarrow 0+} \frac{\Xi (r; S_2,S_1)}{r} = \Xi'(0_+;S_2,S_1)    \\ 
& \quad \text{ implies } \, 
- \lim_{t \to \infty} {\tr}_{\cH} \big(e^{- t S_2} - e^{- t S_1}\big) = \Xi'(0_+; S_2,S_1).
\end{split} 
\end{align}

Again, since $0$ is a right Lebesgue point of $\xi (\, \cdot \,; S_2, S_1)$, one 
concludes that 
\begin{equation}
\sup_{r \in (0,1]} \frac{1}{r}\int_{0}^{r} |\xi (s; S_2, S_1)|\, ds <\infty.
\end{equation} 
Next, from
\begin{equation}
\int_{0}^{\infty}\frac{|\xi (s; S_2, S_1)| \, ds}{s + 1} < \infty,
\end{equation}
it follows that
\begin{align}
& \sup_{t > 0}\frac{1}{t} \int_{0}^{t} |\xi (s; S_2, S_1)|\, ds   \no \\
& \quad \leq \sup_{t \in (0,1]}
\frac{1}{t}\int_{0}^{t} |\xi (s; S_2, S_1)|\, ds 
+ \sup_{t > 1}\frac{1}{t}\int_{1}^{t} |\xi (s; S_2, S_1)|\, ds    \no \\
& \quad \leq \sup_{t \in (0,1]}
\frac{1}{t}\int_{0}^{t} |\xi (s; S_2, S_1)|\, ds + 2 \int_{1}^{t} 
\frac{|\xi (s; S_2, S_1)| \, ds}{s + 1}     \no \\
& \quad \leq \sup_{t \in (0,1]} \frac{1}{t}\int_{0}^{t} |\xi (s; S_2, S_1)|\, ds 
+ 2 \int_{1}^{\infty} \frac{|\xi (s; S_2, S_1)| \, ds}{s + 1} < \infty.
\end{align}
Hence,
\begin{align}
& \big|{\tr}_{\cH} \big( e^{- z S_2}  - e^{- z S_1} \big)\big| 
\leq |z| \int_{0}^{\infty} |\xi (t; S_2, S_1)|\, dt \, e^{-t \Re(z)}   \no \\
&\quad = |z| \Re(z) \int_{0}^{\infty} dt \bigg( t e^{-t \Re(z)} \frac{1}{t} 
\int_{0}^{t} |\xi (s; S_2, S_1)|\, ds\bigg)    \no \\
&\quad \leq C |z| \Re(z) \int_{0}^{\infty} dt \, t e^{-t \Re(z)} \leq C \frac{|z|}{\Re(z)},
\end{align}
for some $C>0$.  Thus the function $f(z):= {\tr}_{\cH} \big( e^{- z S_2}
- e^{- z S_1} \big)$, $\Re(z) > 0$, is bounded in every
sector $S_{\varphi}$, $\varphi \in(0,(\pi/2))$, and tends to the 
limit $\Xi'(0_+; S_2,S_1)$ along the real axis. Then by a standard
application of Vitali's theorem to the sequence  of functions
$\{f_n\}_{n\in\bbN} = \{f(n \cdot)\}_{n \in \bbN}$, analytic on
$S_{\varphi}\cap \{z \in\bbC \,|\, 1<|z|<2\}$, one concludes that
$\lim_{n \to \infty} f_n(z) = \Xi'(0_+; S_2,S_1)$ and $\lim_{n \to \infty} f_n^{(k)}(z)=0$ for every
$k \in \bbN$, 
uniformly in $z$ in compact subsets of $S_{\varphi}$ (see, e.g., \cite[Proposition
2.6.3]{ABHN01}). Therefore, the limit in \eqref{witten} exists uniformly
in $S_{\varphi'}, \varphi'< \varphi$. It remains to recall
that the choice of $\varphi \in(0,(\pi/2))$ was arbitrary.

Similarly, $\lim_{n \to \infty} f^{(k)}(z) = 0$ for every $k \in \bbN$ 
uniformly in any sector $S_\varphi$, $\varphi \in (0,(\pi/2))$.
Hence, differentiating  $\big[e^{-\cdot S_2} - e^{-\cdot S_1}\big]$ in the
trace norm, and using the fact that
 the result must coincide with the derivative
of $\big[e^{-\cdot S_2} - e^{-\cdot S_1}\big]$ in the uniform (or strong) operator 
topology, one obtains for every $k \in \bbN$,
\begin{align}
\lim_{z \to \infty} f^{(k)}(z)& = \lim_{z \to \infty} \frac{d^k}{dz^k}
{\tr}_{\cH} \big(e^{-z S_2} - e^{-z S_1}\big)\\
& = \lim_{z \to \infty}(-1)^{k+1} {\tr}_{\cH} \big(S_2^k e^{-z S_2} - S_1^k  e^{-z S_1}\big)=0,
\end{align}
uniformly in any sector $S_\varphi$, $\varphi \in (0,(\pi/2))$, implying \eqref{derwitten}.
\end{proof}

\begin{remark} \lb{r3.11} 
We recall in connection with the model operator $\bsD_\bsA^{}$ that 
condition \eqref{integrabilityxi} is satisfied by $\xi(\,\cdot\,;\bsH_2,\bsH_1)$ as 
recorded in Lemma \ref{l2.3}. $\Diamond$
\end{remark}

\section{The Witten Index of $\bsD_\bsA^{}$ 
in Terms of Spectral Shift Functions} \lb{s4}

This section is devoted to a derivation of the Witten index of $\bsD_\bsA^{}$.

Since our analysis in this section heavily relies on the use of right and left Lebesgue points 
of spectral shift functions, we refer once more to our brief collection of pertinent facts on 
Lebesgue points in Appendix \ref{sA}. 

We start with the following fact.

\begin{lemma} \lb{l4.1} 
Introduce linear operators $S$ and $T$ by 
\begin{align}
& S: \begin{cases}
L^1_{loc}(\bbR; d\nu) \to L^1_{loc}((0,\infty); d\lambda),    \\[1mm] 
f \mapsto \frac1{\pi}\int_0^{\lambda^{1/2}} d \nu \, (\lambda-\nu^2)^{-1/2} f(\nu), \quad \lambda>0,
\end{cases}   \\[2mm] 
& T: \begin{cases} 
L^1\big(\bbR;(1+\nu^2)^{-3/2}d\nu\big) \to L^1_{loc}((0,\infty); d\lambda),    \\[1mm] 
f \mapsto \lambda \int_{\bbR} d \nu \, (\nu^2+\lambda)^{-3/2} f(\nu),\quad\lambda > 0.
\end{cases}  
\end{align}
$(i)$ If $0$ is a right Lebesgue point for $f \in L^1_{loc}(\bbR; d\nu)$, then it is also right Lebesgue 
point for $S f$ and
\begin{equation} \lb{approx lim}
\LSf (0_+) = \frac12\Lf (0_+).
\end{equation}
$(ii)$ If $0$ is left and right Lebesgue point for $f\in L^1\big(\mathbb{R},(1+\nu^2)^{-3/2}d\nu\big)$, then
\begin{equation} 
\lim_{\varepsilon\downarrow0}(T f)(\varepsilon)= \Lf (0_+) + \Lf (0_-).
\end{equation} 
\end{lemma}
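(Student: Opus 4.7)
For part (i), the plan is to perform the substitution $\nu=\lambda^{1/2}u$, rewriting
\[
(Sf)(\lambda)=\frac{1}{\pi}\int_0^1(1-u^2)^{-1/2}f(\lambda^{1/2}u)\,du.
\]
Since $\frac{1}{\pi}\int_0^1(1-u^2)^{-1/2}\,du=\tfrac12$, the claim \eqref{approx lim} is equivalent to showing that $\frac{1}{r}\int_0^r\bigl|(Sf)(\lambda)-\tfrac12\Lf(0_+)\bigr|\,d\lambda\to 0$ as $r\downarrow 0$. A Fubini interchange reduces the problem to estimating the inner average $\phi_r(u):=\frac{1}{r}\int_0^r|f(\lambda^{1/2}u)-\Lf(0_+)|\,d\lambda$, integrated against the $L^1$-weight $(1-u^2)^{-1/2}$ on $(0,1)$. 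A further substitution $s=\lambda^{1/2}u$ recasts $\phi_r(u)$ as $\frac{2}{(r^{1/2}u)^{2}}\int_0^{r^{1/2}u}s\,|f(s)-\Lf(0_+)|\,ds$, dominated by $\frac{2}{r^{1/2}u}\int_0^{r^{1/2}u}|f(s)-\Lf(0_+)|\,ds$. For each fixed $u\in(0,1]$ this tends to $0$ as $r\downarrow 0$ by the right Lebesgue point hypothesis, and for $r$ small enough it is uniformly bounded by $2\sup_{0<R\leq R_0}R^{-1}\int_0^R|f(s)-\Lf(0_+)|\,ds$, which is finite on some $(0,R_0]$ because the Lebesgue point property forces this ratio to tend to $0$. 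Dominated convergence with the integrable majorant $(1-u^2)^{-1/2}$ on $(0,1)$ then closes the argument.

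For part (ii), I would view $P_\lambda(\nu):=\lambda(\nu^2+\lambda)^{-3/2}$ as an approximate identity concentrated at $0$ with total mass $\int_{\bbR}P_\lambda\,d\nu=2$ (a routine computation via $\nu=\lambda^{1/2}u$), with mass $1$ on each half-line. Using this, write
\[
(Tf)(\lambda)-[\Lf(0_+)+\Lf(0_-)]=I_+(\lambda)+I_-(\lambda),
\]
where $I_+(\lambda):=\int_0^{\infty}P_\lambda(\nu)[f(\nu)-\Lf(0_+)]\,d\nu$ and $I_-(\lambda)$ is the analogous expression on $(-\infty,0)$ involving $\Lf(0_-)$. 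By symmetry it suffices to show $I_+(\lambda)\to 0$; $I_-(\lambda)$ is handled identically, using the left Lebesgue point hypothesis. I would split the integral defining $I_+(\lambda)$ at some small $\delta>0$. On $[\delta,\infty)$ the pointwise bound $P_\lambda(\nu)\leq\lambda\nu^{-3}$ together with $f\in L^1(\bbR;(1+\nu^2)^{-3/2}d\nu)$ (and the comparison $\nu^{-3}\leq C_\delta(1+\nu^2)^{-3/2}$ on $[\delta,\infty)$) yields a contribution of size $O(\lambda)$.

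On $[0,\delta]$, I would integrate by parts using the primitive $\tilde G(\nu):=\int_0^{\nu}|f(s)-\Lf(0_+)|\,ds$, obtaining
\[
\lambda\int_0^{\delta}(\nu^2+\lambda)^{-3/2}|f(\nu)-\Lf(0_+)|\,d\nu=\lambda(\delta^2+\lambda)^{-3/2}\tilde G(\delta)+3\lambda\int_0^{\delta}\nu(\nu^2+\lambda)^{-5/2}\tilde G(\nu)\,d\nu.
\]
The boundary term at $\nu=\delta$ is $O(\lambda)$. Given $\epsilon>0$, the right Lebesgue point property allows me to choose $\delta>0$ so that $\tilde G(\nu)\leq\epsilon\nu$ on $[0,\delta]$; the remaining term is then bounded by
\[
3\epsilon\lambda\int_0^{\infty}\nu^2(\nu^2+\lambda)^{-5/2}\,d\nu=\epsilon,
\]
the final equality following from the substitution $\nu=\lambda^{1/2}u$ combined with $\int_0^{\infty}u^2(u^2+1)^{-5/2}\,du=\tfrac13$. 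Combining everything yields $\limsup_{\lambda\downarrow 0}|I_+(\lambda)|\leq\epsilon$, and the arbitrariness of $\epsilon$ finishes the proof.

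The main obstacle I anticipate is the correct ordering of quantifiers in (ii): the cut-off $\delta$ depends on $\epsilon$ through the Lebesgue point property, while the $O(\lambda)$ boundary and tail estimates depend on $\delta$. Making sure these dependencies do not interact adversely (first fix $\epsilon$, then $\delta$, then let $\lambda\downarrow 0$) is the step requiring the most care; the rest is standard approximate-identity bookkeeping.
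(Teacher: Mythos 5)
Your proof is correct and follows essentially the same route as the paper's: normalize via $S1=\tfrac12$ and $T\chi_{(0,\infty)}=T\chi_{(-\infty,0)}=1$ to reduce to $\Lf=0$, handle (i) by a Fubini/substitution estimate, and handle (ii) by splitting the kernel integral and integrating by parts against the primitive $\int_0^\nu|f-\Lf(0_+)|$ with the bound $\tilde G(\nu)\le\epsilon\nu$. The only (harmless) differences are organizational: in (i) the paper gets the clean one-line bound $\frac{2}{\pi h^{1/2}}\int_0^{h^{1/2}}|f|$ directly from Fubini rather than invoking dominated convergence, and in (ii) the paper uses a three-way split $[0,h]\cup[h,\tau]\cup[\tau,\infty)$ with integration by parts only on the middle piece, whereas you integrate by parts over all of $[0,\delta]$.
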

\begin{proof} $(i)$ Since $S 1=1/2$, we assume, without loss of generality, that $\Lf (0_+) = 0$. One then infers from Fubini's theorem that
\begin{align} 
& \bigg|\frac1h\int_0^h d \lambda \, (Sf)(\lambda)\bigg|
\leq \frac1{\pi h} \int_0^h d\lambda  \int_0^{\lambda^{1/2}}\frac{d \nu \, |f(\nu)|}{(\lambda-\nu^2)^{1/2}}  
\no \\
& \quad =\frac1{\pi h}\int_0^{h^{1/2}} d \nu \, |f(\nu)| \int_{\nu^2}^h\frac{d\lambda}{(\lambda-\nu^2)^{1/2}}  
=\frac{2}{\pi h}\int_0^{h^{1/2}} d \nu \, |f(\nu)| (h-\nu^2)^{1/2}   \no \\
& \quad \leq \frac{2}{\pi h^{1/2}}\int_0^{h^{1/2}} d\nu \, |f(\nu)|.
\end{align} 
Since by assumption $\Lf (0_+) = 0$, it follows that
\begin{equation} 
\frac{1}{h^{1/2}}\int_0^{h^{1/2}} d \nu \, |f(\nu)| \underset{h\downarrow 0}{\longrightarrow} 0, 
\end{equation} 
concluding the proof of item $(i)$.  \\
$(ii)$ Since $T \chi_{(0,\infty)} = T\chi_{(-\infty,0)}=1$, we assume, without loss 
of generality, that $0$ is a Lebesgue point for $f \in L^1\big(\bbR;(1+\nu^2)^{-3/2}d\nu\big)$, 
that $\Lf (0)=0$, and that $f$ is supported on $(0,\infty)$. 

Fix $\delta>0$ and select $\tau > 0$ such that
\begin{equation} \lb{our t def}
\frac1h \int_0^h d\nu \, |f(\nu)| < \delta, \quad 0 < h < \tau.
\end{equation}
For every $0 < h<\tau$, one has 
\begin{equation} 
(Tf)(h)=\int_0^h\frac{d \nu \, h^2f(\nu)}{(\nu^2+h^2)^{3/2}} 
+ \int_h^\tau \frac{d \nu \, h^2f(\nu)}{(\nu^2+h^2)^{3/2}}
+ \int_{\tau}^{\infty}\frac{d \nu \, h^2f(\nu)}{(\nu^2+h^2)^{3/2}}.     \lb{4.8}
\end{equation} 
Evidently,
\begin{equation} 
\bigg|\int_0^h\frac{d \nu \, h^2f(\nu)}{(\nu^2+h^2)^{3/2}}\bigg|\leq\frac1h\int_0^h d \nu \, |f(\nu)|  
\underset{h\downarrow 0}{\longrightarrow} 0, 
\end{equation} 
and
\begin{equation} 
\bigg|\int_{\tau}^{\infty}\frac{d \nu \, h^2f(\nu)}{(\nu^2+h^2)^{3/2}}\bigg| 
\leq h^2\int_{\tau}^{\infty}\frac{d \nu \, |f(\nu)|}{\nu^3}\underset{h\downarrow 0}{\longrightarrow} 0.
\end{equation} 
Next, we estimate the second term on the right-hand side of \eqref{4.8}. For brevity, set
\begin{equation} 
F(t)=\int_0^t d\nu \, |f(\nu)|, \quad t>0.
\end{equation} 
Then, 
\begin{equation} 
\bigg|\int_h^{\tau}\frac{d \nu \, h^2f(\nu)}{(\nu^2+h^2)^{3/2}}\bigg| 
\leq\int_h^{\tau}\frac{d \nu \, h^2|f|(\nu)}{\nu^3}
=\frac{h^2F(\nu)}{\nu^3}\bigg|_h^{\tau}+3\int_h^{\tau}\frac{d \nu \, h^2F(\nu)}{\nu^4}.
\end{equation} 
By \eqref{our t def}, one concludes that $0\leq F(\nu)\leq\delta\nu$ for $0\leq\nu\leq \tau$. It follows that 
\begin{equation} 
\bigg|\int_h^{\tau}\frac{d \nu \, h^2f(\nu)}{(\nu^2+h^2)^{3/2}}\bigg|
\leq 2\delta+3h^2\delta\int_h^{\tau}\frac{d\nu}{\nu^3}\leq \frac{7}{2}\delta.
\end{equation} 
It follows that
\begin{equation} 
\limsup_{h\downarrow0}|(T f)(h)| \leq \frac{7}{2}\delta.
\end{equation} 
Since $\delta$ is arbitrarily small, the assertion in item $(ii)$ follows.
\end{proof}

In the next result we use the notation ${\rm Hol}(\mathbb{C}\backslash [0,\infty))$ to denote holomorphic functions on the set $\mathbb{C}\backslash [0,\infty)$.

\begin{lemma} \lb{l4.2} Introduce the linear operator  $\mathbf{T}$ by 
\begin{equation} 
\mathbf{T}: \begin{cases} 
L^1\big(\bbR; (1+\nu^2)^{-3/2}d\nu\big) \to {\rm Hol} (\bbC \backslash [0,\infty))  \\[1mm] 
f \mapsto -z\int_{\mathbb{R}} d \nu \, (\nu^2-z)^{-3/2} f(\nu), \quad z \in \mathbb{C}\backslash [0,\infty).
\end{cases} 
\end{equation} 
If $0$ is a left and a right Lebesgue point for $f\in L^1\big(\bbR; (1+\nu^2)^{-3/2}d\nu\big)$, then with 
$S_{\varphi}$, $ \varphi \in (0,\pi/2)$, the sector introduced in \eqref{3.39}, 
\begin{equation} 
\lim_{z\to0, \, z\in \bbC \backslash S_{\varphi}}(\mathbf{T}f)(z)=\Lf(0_+) + \Lf(0_-).
\end{equation} 
\end{lemma}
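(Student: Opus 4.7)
The strategy is to reduce to the real-variable version already handled in Lemma~\ref{l4.1}$(ii)$ by means of a geometric bound on $|\nu^2 - z|$ that holds uniformly as $z\to 0$ outside the sector $S_\varphi$. The kernel $(\nu^2-z)^{-3/2}$ is even in $\nu$, so splitting $f = f\chi_{(0,\infty)} + f\chi_{(-\infty,0)}$ and applying the change of variables $\nu\mapsto -\nu$ in the second piece, it suffices to prove: if $g \in L^1((0,\infty);(1+\nu^2)^{-3/2}d\nu)$ has $0$ as right Lebesgue point, then
\begin{equation*}
-z \int_0^\infty (\nu^2-z)^{-3/2} g(\nu)\,d\nu \; \underset{z\to 0,\, z\in \bbC\setminus S_\varphi}{\longrightarrow}\; \Lf_g(0_+),
\end{equation*}
the ``$\Lf(0_-)$'' contribution being obtained from the reflected half. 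The substitution $\nu = (-z)^{1/2} t$ (principal branch, well-defined on $\bbC\setminus[0,\infty)$) evaluates $-z\int_0^\infty (\nu^2-z)^{-3/2}d\nu = 1$, so by subtracting a constant multiple of $g$ we may assume $\Lf_g(0_+)=0$ and aim to prove the limit is $0$.

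The key geometric step is to show that for $z = re^{i\theta}$ with $\varphi \le |\theta| \le \pi$ and $\nu \in \bbR$,
\begin{equation*}
|\nu^2-z|^2 = (\nu^2 - r\cos\theta)^2 + r^2\sin^2\theta \;\ge\; (1-\cos\varphi)\,(\nu^4 + r^2),
\end{equation*}
which follows from the AM-GM bound $2\nu^2 r \le \nu^4+r^2$ together with $\cos\theta \le \cos\varphi$. Consequently there exists $C_\varphi>0$ with
\begin{equation*}
|(\nu^2 - z)^{-3/2}| \le C_\varphi (\nu^2 + |z|)^{-3/2}, \quad z \in \bbC\setminus S_\varphi,\; \nu\in\bbR.
\end{equation*}
This reduces the complex integrand to the positive real-variable integrand appearing in Lemma~\ref{l4.1}$(ii)$, up to the multiplicative constant $C_\varphi$.

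Having this pointwise majorant, one mimics the three-piece split used in the proof of Lemma~\ref{l4.1}$(ii)$: fix $\delta>0$, choose $\tau>0$ so that $(1/h)\int_0^h |g(\nu)|\,d\nu < \delta$ for $0<h<\tau$, and for $|z|<\tau^2$ decompose
\begin{equation*}
\int_0^\infty = \int_0^{|z|^{1/2}} + \int_{|z|^{1/2}}^\tau + \int_\tau^\infty.
\end{equation*}
The first integral is controlled by $C_\varphi|z|\cdot|z|^{-3/2}\cdot\delta|z|^{1/2} = C_\varphi \delta$ using the Lebesgue point condition at the cutoff $|z|^{1/2}$. The middle integral is handled exactly as in Lemma~\ref{l4.1}$(ii)$ via integration by parts and the bound $F(\nu)\le\delta\nu$ on $(0,\tau]$ (with $F(t)=\int_0^t|g|$), yielding a contribution of order $\delta + |z|\tau^{-3}F(\tau)$. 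The tail integral is dominated by $C_\varphi|z| \int_\tau^\infty \nu^{-3}|g(\nu)|\,d\nu$, which is $O(|z|)$ since $g \in L^1((1+\nu^2)^{-3/2}d\nu)$ makes $\nu^{-3}|g(\nu)|$ integrable on $[\tau,\infty)$. Sending $z\to 0$ and then $\delta\downarrow 0$ concludes the argument.

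The main obstacle is the geometric bound in the second paragraph: it is what makes the complex integral behave uniformly like the real one of Lemma~\ref{l4.1}$(ii)$, and it is precisely this step that would fail without the sector-avoidance hypothesis, since allowing $z \to 0$ tangentially to $[0,\infty)$ would let the singularities of $(\nu^2-z)^{-3/2}$ at $\nu=\pm z^{1/2}$ migrate onto the integration path and destroy the estimate. The remainder of the proof is a direct transcription of the real-variable Lemma~\ref{l4.1}$(ii)$ with the majorant $C_\varphi(\nu^2+|z|)^{-3/2}$ in place of $(\nu^2+|z|)^{-3/2}$.
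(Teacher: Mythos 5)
Your proposal is correct and follows essentially the same route as the paper: the decisive step in both is the sector estimate $|(\nu^2-z)^{-3/2}|\leq C_\varphi(\nu^2+|z|)^{-3/2}$ for $z\in\bbC\backslash S_\varphi$ (with the same constant $C_\varphi=\{2/[1-\cos(\varphi)]\}^{3/4}$), after normalizing away the Lebesgue values using $\mathbf{T}\chi_{(0,\infty)}=\mathbf{T}\chi_{(-\infty,0)}=1$. The only cosmetic difference is that the paper then simply bounds $|(\mathbf{T}f)(z)|\leq C_\varphi\,(T|f|)(|z|)$ and invokes Lemma \ref{l4.1}\,$(ii)$ applied to $|f|$, whereas you re-run its three-piece splitting argument with the majorant in place.
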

\begin{proof} The functions $\mathbf{T}\chi_{(0,\infty)}$ and $\mathbf{T}\chi_{(-\infty,0)}$ are holomorphic on $\mathbb{C}\backslash [0,\infty)$ and are equal to $1$ on $(-\infty, 0)$. By the uniqueness theorem for holomorphic functions one concludes that $\mathbf{T}\chi_{(0,\infty)}=\mathbf{T}\chi_{(-\infty,0)}=1$. Since the assertion is linear with respect to $f$, we may assume, without loss of generality, that $0$ is Lebesgue point for $f$ and that $\Lf (0)=0$. This implies $\Lmodf (0)=0$.  \\
Next, one notes the existence of a constant $C_{\varphi} > 0$ such that
\begin{equation} 
\Big|\frac1{(\nu^2-z)^{3/2}}\Big|\leq\frac{C_{\varphi}}{(\nu^2+|z|)^{3/2}}, 
\quad z \in \bbC \backslash S_{\varphi}.
\end{equation} 
In fact, one can choose $C_{\varphi} = \{2/[1 - \cos(\varphi)]\}^{3/4}$. 
It follows that 
\begin{equation} 
|(\mathbf{T}f)(z)|\leq C_{\varphi} \, (T |f|)(|z|),\quad z \in \bbC \backslash S_{\varphi},
\end{equation} 
with $T$ the operator defined in Lemma \ref{l4.1}. By Lemma \ref{l4.1}\,$(ii)$, 
$(T |f|)(|z|) \underset{z \to 0}{\longrightarrow} 0$. 
\end{proof}

Combining Lemmas \ref{l4.1} and \ref{l4.2} and Theorem \ref{t3.10} one obtains the principal result of this section:

\begin{theorem} \lb{t5.5} Assume Hypothesis \ref{h2.1} and let $\varphi\in(0,\pi/2)$. In addition, suppose 
that $0$ is a right and a left Lebesgue point of $\xi(A_+, A_-)$. Then 
\begin{align} 
& \text{$0$ is a right Lebesgue point of $\xi(\, \cdot \, ; \bsH_2,\bsH_1)$ and}    \no \\
& \quad \Lxi(0_+; \bsH_2, \bsH_1) = [\Lxi(0_+; A_+,A_-) + \Lxi(0_-; A_+, A_-)]/2.   \lb{55a}
\end{align}
Moreover, the following assertions hold: 
\begin{align} 
& W_r(\bsD_\bsA) = \lim_{z \to 0, \, z \in \bbC \backslash  S_{\varphi}}z\tr_{L^2(\bbR;\cH)}
\big((\bsH_2 - z \bsI)^{-1} - (\bsH_1 - z \bsI)^{-1}\big),   \lb{55b} \\[1mm] 
& W_r(\bsD_\bsA)=- \lim_{z \to 0, \, z \in \bbC \backslash  S_{\varphi}} 
\f{z}{2} \int_{\bbR} \frac{\xi(\nu; A_+, A_-) \, d\nu}{(\nu^2 - z)^{3/2}},     \lb{55c} \\[1mm] 
& W_r(\bsD_\bsA)=\lim_{z \to \infty, \, z \in S_{\varphi}} 
\tr_{L^2(\bbR;\cH)}\big(e^{- z \bsH_1} - e^{-z \bsH_2}\big),    \lb{55d} \\[1mm] 
& W_r(\bsD_\bsA) = [\Lxi(0_+; A_+,A_-) + \Lxi(0_-; A_+, A_-)]/2 = W_s(\bsD_\bsA).  \lb{55e} 
\end{align} 
\end{theorem}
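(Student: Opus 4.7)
The proof runs in three linked steps, each reducing one of the trace or Laplace-type expressions to a Lebesgue-point statement via one of the two elementary lemmas immediately preceding the theorem; computationally the bulk of the work is already packaged in those lemmas together with Theorem \ref{t3.10}\,$(ii)$.

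\emph{Step 1: the Lebesgue-point identity \eqref{55a}.}  Pushnitski's trace formula \eqref{2.37a} gives, for a.e.\ $\lambda>0$,
\begin{equation*}
\xi(\lambda;\bsH_2,\bsH_1)=\frac{1}{\pi}\int_{-\lambda^{1/2}}^{\lambda^{1/2}}\frac{\xi(\nu;A_+,A_-)\,d\nu}{(\lambda-\nu^2)^{1/2}}=(Sf)(\lambda),
\end{equation*}
after folding the integral about $\nu=0$ via $f(\nu):=\xi(\nu;A_+,A_-)+\xi(-\nu;A_+,A_-)$, with $S$ the operator of Lemma \ref{l4.1}.  The hypothesis that $0$ is both a right and a left Lebesgue point of $\xi(\cdot\,;A_+,A_-)$ makes $0$ a right Lebesgue point of $f$ with $\Lf(0_+)=\Lxi(0_+;A_+,A_-)+\Lxi(0_-;A_+,A_-)$, and Lemma \ref{l4.1}\,$(i)$ upgrades this to a right Lebesgue point of $\xi(\cdot\,;\bsH_2,\bsH_1)$ with the half-sum value asserted in \eqref{55a}.

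\emph{Step 2: resolvent regularisation \eqref{55b} and \eqref{55c}.}  Theorem \ref{t2.2} combined with \eqref{2.24} yields, for $z\in\bbC\backslash[0,\infty)$,
\begin{equation*}
z\tr_{L^2(\bbR;\cH)}\big((\bsH_2-z\bsI)^{-1}-(\bsH_1-z\bsI)^{-1}\big)=-\frac{z}{2}\int_\bbR\frac{\xi(\nu;A_+,A_-)\,d\nu}{(\nu^2-z)^{3/2}}=\tfrac{1}{2}\bigl(\mathbf{T}\,\xi(\cdot\,;A_+,A_-)\bigr)(z),
\end{equation*}
with $\mathbf{T}$ the operator of Lemma \ref{l4.2}; this identifies the right-hand sides of \eqref{55b} and \eqref{55c}.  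Lemma \ref{l4.2} supplies the sectorial limit as $z\to 0$ in $\bbC\backslash S_\varphi$, equal to $\tfrac{1}{2}[\Lxi(0_+;A_+,A_-)+\Lxi(0_-;A_+,A_-)]$; specialising to $z=\lambda\uparrow 0$ along the negative real axis identifies this limit with $W_r(\bsD_\bsA)$ in the sense of \eqref{1.13a}, proving \eqref{55b}, \eqref{55c}, and the first equality of \eqref{55e}.

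\emph{Step 3: semigroup regularisation \eqref{55d} and $W_s=W_r$.}  Apply Theorem \ref{t3.10}\,$(ii)$ to the pair $(\bsH_2,\bsH_1)$: the resolvent trace-class input \eqref{traceclassasump} is Theorem \ref{t2.2}, the integrability \eqref{integrabilityxi} of $\xi(\cdot\,;\bsH_2,\bsH_1)$ is Lemma \ref{l2.3}, and the right-Lebesgue-point property at $0$ is Step 1.  Conclusion \eqref{witten} then yields the sectorial semigroup limit \eqref{55d} with value $\Lxi(0_+;\bsH_2,\bsH_1)=\tfrac{1}{2}[\Lxi(0_+;A_+,A_-)+\Lxi(0_-;A_+,A_-)]$; restricting to the positive real axis gives $W_s(\bsD_\bsA)$ by \eqref{1.13b} and finishes \eqref{55e}.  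Conceptually the only delicate point peculiar to Theorem \ref{t5.5} itself is the symmetrisation used in Step 1 (which is why \emph{both} one-sided Lebesgue values of $\xi(\cdot\,;A_+,A_-)$ at $0$ must be hypothesised, rather than, say, their average alone); the heavier analytic lifting is done by the Abelian/Vitali argument inside Theorem \ref{t3.10}\,$(ii)$ that turns a real-axis Lebesgue-point limit into a uniform sectorial one.
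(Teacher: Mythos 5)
Your proposal is correct and follows essentially the same route as the paper: the symmetrization $f(\nu)=\xi(\nu;A_+,A_-)+\xi(-\nu;A_+,A_-)$ fed into Lemma \ref{l4.1}\,$(i)$ for \eqref{55a}, the identification of the resolvent trace with $\tfrac12\mathbf{T}\xi(\cdot\,;A_+,A_-)$ via \eqref{2.19} and \eqref{2.24} together with Lemma \ref{l4.2} for \eqref{55b}--\eqref{55c}, and Theorem \ref{t3.10}\,$(ii)$ (with its hypotheses verified by Theorem \ref{t2.2} and Lemma \ref{l2.3}) for \eqref{55d}--\eqref{55e}. No gaps; the factor bookkeeping in Step 2 is also right.
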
 
\begin{proof} 
Rewiting \eqref{2.37a} in the form,
\begin{equation} 
\xi(\lambda; \bsH_2, \bsH_1) = \frac{1}{\pi}\int_0^{\lambda^{1/2}}
\frac{d \nu \, [\xi(\nu; A_+,A_-) + \xi(-\nu; A_+,A_-)]}{(\lambda-\nu^2)^{1/2}},  
\quad \lambda > 0,     \lb{55f}
\end{equation} 
an application of Lemma \ref{l4.1}\,$(i)$ to the particular function $f(\nu) = \xi(\nu,A_+,A_-)+\xi(-\nu,A_+,A_-)$, 
$\nu > 0$, yields \eqref{55a}. \\
The identity,
\begin{equation}
z \tr_{L^2(\bbR;\cH)} \big((\bsH_2 - z \bsI)^{-1} - (\bsH_1 - z \bsI)^{-1}\big) = 
- \f{z}{2} \int_{\bbR} \frac{\xi(\nu; A_+, A_-) \, d\nu}{(\nu^2 - z)^{3/2}}, 
\quad z \in \bbC \backslash [0,\infty),   \lb{55g} 
\end{equation}
follows from \eqref{2.19} and \eqref{2.24} (the latter were proven in \cite{GLMST11}). The limit on the right-hand side of  \eqref{55c} exists by Lemma \ref{l4.2}. Consequently, by \eqref{55g}, also the limit on the right-hand side of  \eqref{55b} exists and equals that in \eqref{55c}. 
By definition \eqref{3.3}, this limit equals $W_r(\bsD_\bsA)$.  \\
The first equality in \eqref{55e} follows from Lemma \ref{l4.2} and \eqref{55c}; the second  
equality in \eqref{55e} follows from Theorem \ref{t3.10} and Remark \ref{r3.11} combined with \eqref{55a}. \\
Finally, \eqref{55d} follows from \eqref{55e} and Theorem \ref{t3.10}.
\end{proof}

In Remark \ref{r3.4} we recalled the fact that the Witten index, in general, can be any prescribed 
real number. Next we show that this also applies to the special case of the Witten 
index of $\bsD_\bsA^{}$:

\begin{remark} \lb{r5.6}
There exist pairs of self-adjoint operators $(A_+, A_-)$ in $\cH$ such that $0$ is a 
Lebesgue point of $\xi(\,\cdot\,; A_+,A_-)$ and that 
\begin{align}
\begin{split}
W_r(\bsD_\bsA^{}) &= W_s(\bsD_\bsA^{}) = \Lxi(0_+; \bsH_2, \bsH_1) 
= \Lxi(0; A_+,A_-)   \\
&= \text{{\bf any} prescribed real number.}     \lb{5.37} 
\end{split}
\end{align}
A simple concrete example is the following: Consider $A_{\pm} \in \cB(\cH)$ 
with $[A_+ - A_-] \in \cB_1(\cH)$, and introduce 
\begin{equation}
B(t) = \f{e^t}{e^t + 1} [A_+ - A_-] \in \cB_1(\cH), \quad t \in \bbR.
\end{equation}
Then 
\begin{equation}
B'(t) = \f{e^t}{(e^t + 1)^2} [A_+ - A_-] \in \cB_1(\cH), \quad t \in \bbR,
\end{equation}
shows that Hypothesis \ref{h2.1} is satisfied. Moreover, since by 
\cite[Proposition\ A.8]{GPS08} {\it any} integrable function 
$\xi \in L^1(\bbR; dt)$ of compact support arises as the 
spectral shift function for a pair of bounded, self-adjoint operators 
$(A_+,A_-)$ in $\cH$ with $[A_+ - A_-] \in \cB_1(\cH)$, Theorem \ref{t5.5} implies  
\eqref{5.37}. $\Diamond$
\end{remark}

It remains to illustrate the nature of the assumption that  $0$ is a right and left Lebesgue 
point of $\xi(\,\cdot\,\, ; A_+, A_-)$. We start with simplest case where $A_{\pm}$ have discrete 
spectrum in an open neighborhood of $0$:

\begin{remark} \lb{r5.7}
By \cite[Proposition\ 8.2.8]{Ya92}, if $A_{\pm}$ have discrete spectrum in an open neighborhood 
of $0$, then $\xi(\, \cdot \,; A_+,A_-)$ has a right and left limit at any point of this open neighborhood 
and in particular any point in that open neighborhood is a right and a left Lebesgue point of 
$\xi(\, \cdot \,; A_+,A_-)$. $\Diamond$
\end{remark}

Next, to turn the discussion to absolutely continuous spectra of $A_{\pm}$ near $0$, we refer 
the reader to our quick review of some basic facts of self-adjoint rank-one perturbations of a 
self-adjoint operator $A_0$ in $\cH$ in Appendix \ref{sA}. Given the preparations in 
Appendix \ref{sA}, we can continue our discussion of the right and left Lebesgue point 
hypothesis on $\xi(\, \cdot \, ; A_+, A_-)$ under the assumption of the presence of purely absolutely 
continuous spectrum of $A_{\pm}$ in a neighborhood of $0$ as follows:

\begin{proposition} \lb{p5.8}
There exist pairs of bounded self-adjoint operators $(A_+, A_-)$ in $\cH$ such that $A_+ - A_-$ 
is of rank one, and $A_{\pm}$ both have purely absolutely continuous spectrum in a fixed 
neighborhood $(-\varepsilon_0, \varepsilon_0)$, for some $\varepsilon_0 > 0$, yet 
$\xi(\, \cdot \,; A_+,A_-)$ may or may not have a right and/or a left Lebesgue point at $0$. 
\end{proposition}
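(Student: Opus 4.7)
The approach is to construct both kinds of examples as rank-one self-adjoint perturbations on $\cH = L^2(\bbR;d\mu)$ for suitable bounded, compactly supported positive measures $\mu$, using the Herglotz representation for $\xi(\,\cdot\,;A_+,A_-)$ in the rank-one case collected in Appendix \ref{sA}. Let $A_-$ be multiplication by the independent variable (a bounded self-adjoint operator), take $\varphi\equiv 1$ as cyclic vector, and set $A_+ = A_- + \alpha(\varphi,\cdot)_{\cH}\varphi$ with $\alpha\in\bbR\setminus\{0\}$. Writing
\[
F(z) = \big(\varphi,(A_--zI_{\cH})^{-1}\varphi\big)_{\cH} = \int_{\bbR}\frac{d\mu(x)}{x-z}, \quad z\in\bbC_+,
\]
the rank-one formula gives
\[
\xi(\lambda;A_+,A_-) = \pi^{-1}\Im\log(1+\alpha F(\lambda+i0)) \text{ for a.e.\ } \lambda\in\bbR,
\]
with the branch fixed as in Appendix \ref{sA}. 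Whenever the density $w = d\mu/d\lambda$ exists, is strictly positive, and is bounded on $(-\varepsilon_0,\varepsilon_0)$, both $A_\pm$ have purely absolutely continuous spectrum there: $|1+\alpha F(\lambda+i0)|\ge|\alpha|\pi w(\lambda)>0$ rules out embedded eigenvalues, while the ac density of $A_+$ for $\varphi$, which equals $w/|1+\alpha F(\cdot+i0)|^2$ by the Aronszajn--Donoghue analysis in Appendix \ref{sA}, stays pointwise positive. So the whole problem reduces to selecting $d\mu$.

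For the positive outcome, I would take $d\mu(\lambda) = \chi_{[-1,1]}(\lambda)\,d\lambda$; a direct computation yields
\[
F(\lambda+i0) = \log\Big|\tfrac{1-\lambda}{1+\lambda}\Big| + i\pi, \quad \lambda\in(-1,1),
\]
so $\xi(\,\cdot\,;A_+,A_-)$ is continuous at $\lambda=0$, hence $0$ is both a right and a left Lebesgue point.

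For the negative outcome, the plan is to construct a bounded density $w$ that is strictly positive on $(-\varepsilon_0,\varepsilon_0)$ but that fails the right Lebesgue point property at $0$, and then to transfer this failure to $\xi$. Concretely, build $w$ as a piecewise-constant modification of $1$ supported in dyadic annuli accumulating at $0$ from the right,
\[
w = 1 + \sum_{n\ge 1}c_n\chi_{J_n}, \quad J_n=(a_n,b_n)\subset(0,\varepsilon_0) \text{ disjoint, } c_n\in[-\tfrac{1}{2},\tfrac{1}{2}],
\]
exploiting the explicit formula $(H\chi_{(a,b)})(\lambda) = \pi^{-1}\log|(\lambda-a)/(\lambda-b)|$ for the Hilbert transform of an indicator function. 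Choosing $\{a_n\}$, $\{b_n\}$, $\{c_n\}$ so that $h^{-1}\int_0^h w(s)\,ds$ oscillates between two distinct limits along different sequences $h\downarrow 0$ (a standard real-variable construction) makes $0$ fail to be a right Lebesgue point of $w$, whereas $w\equiv 1$ on $(-\varepsilon_0,0)$ makes $0$ a left Lebesgue point. The remaining combinations asserted in the statement are produced by mirror-image or two-sided variants of this construction with unmatched amplitudes.

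The main obstacle is that the failure of the right Lebesgue point property must be propagated from $w$ through the nonlinear map $\Im\log(1+\alpha F(\cdot+i0))$ to $\xi(\,\cdot\,;A_+,A_-)$, that is, accidental cancellation inside the logarithm must be ruled out. For $|\alpha|$ small one has the expansion
\[
\pi\xi(\lambda;A_+,A_-) = \alpha\pi w(\lambda) + \alpha^2 g(\lambda) + O(\alpha^3),
\]
with $g$ explicit in terms of $w$ and $\Re F(\cdot+i0)$; using the fact that $\Re F(\cdot+i0)$ has at worst logarithmic growth at $\lambda=0$ (as a sum of contributions of the form $\log|(\lambda-a_n)/(\lambda-b_n)|$, plus a smooth part from outside the support of the perturbation), the $\alpha^2$ correction produces a Ces\`aro average at $0$ that is $O(\alpha^2\log(1/h))$, whereas the oscillation of $\alpha h^{-1}\int_0^h w(s)\,ds$ is of order $|\alpha|$. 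For $|\alpha|$ chosen small enough in terms of the construction parameters, the leading oscillation survives and $\xi$ inherits the non-Lebesgue-point behavior of $w$ at $0$. Verifying the quantitative bookkeeping that makes the three sequences $\{a_n\}$, $\{b_n\}$, $\{c_n\}$ (and the scale of $\alpha$) compatible is the technical heart of the argument.
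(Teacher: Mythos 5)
Your positive example (Lebesgue measure on $[-1,1]$, explicit $F$, continuous $\xi$ at $0$) is fine, and your verification that boundedness of $w$ from above and below forces purely absolutely continuous spectrum of both $A_\pm$ near $0$ is essentially the same Aronszajn--Donoghue argument the paper uses. But your overall route is the reverse of the paper's, and the reversal is where the trouble lies. You go \emph{forward}: choose the spectral measure $w$ of $A_-$, then compute $\xi=\pi^{-1}\Im\log(1+\alpha F(\cdot+i0))$ and try to show $\xi$ inherits the failure of the Lebesgue-point property from $w$. The paper goes \emph{backward}: by \eqref{5.53} (the realization theorem of \cite[Proposition A.8]{GPS08}), \emph{every} compactly supported $\xi_0\in L^1(\bbR)$ with $0\le\xi_0\le1$ already \emph{is} the spectral shift function of a rank-one pair, so one simply prescribes $\xi_0$ with whatever right/left Lebesgue-point behavior one wants at $0$, imposes $c_0\le\xi_0\le d_0$ with $0<c_0<d_0<1$ near $0$ to get absolute continuity of $A_-$ via \eqref{5.52}, and smoothness of $\xi_0$ off $0$ to get absolute continuity of $A_+$ via \eqref{5.48}--\eqref{5.50}. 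Nothing has to be "propagated through the nonlinear map" at all; the obstacle you correctly identify as the technical heart of your plan simply does not arise in the paper's argument.

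That obstacle is, moreover, a genuine gap as you have stated it. Your claim is that the $\alpha^2$ correction contributes $O(\alpha^2\log(1/h))$ to the Ces\`aro averages $h^{-1}\int_0^h$, and that "for $|\alpha|$ chosen small enough the leading oscillation survives." This is logically inconsistent: a quantity growing like $\alpha^2\log(1/h)$ as $h\downarrow0$ cannot be dominated, uniformly in $h$, by the fixed $O(|\alpha|)$ oscillation of the leading term, no matter how small $\alpha$ is; for $h\lesssim e^{-c/\alpha}$ the correction wins. To make your route work you must arrange the construction so that the Ces\`aro averages of $wG$ (with $G=\Re F(\cdot+i0)$) stay \emph{bounded} as $h\downarrow0$, and this is in tension with the oscillation you need: persistent oscillation of $h^{-1}\int_0^h w$ forces the intervals $J_n$ to occupy a positive proportion of $(0,h)$ infinitely often with $|c_n|$ bounded below, which (for one-signed $c_n$) makes $G$ genuinely diverge logarithmically at $0$, and even with alternating signs the endpoint logarithms of $H\chi_{J_n}$ contribute terms of size $|J_n|\log(1/|J_n|)$ whose Ces\`aro sums must be controlled. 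None of this is fatal in principle, but it is exactly the bookkeeping you have not done, and the statement of the proposition is not proved without it. If you want to keep the forward construction, you should either carry out that estimate in full or, better, replace it by the paper's inverse construction via \eqref{5.53}, which reduces the whole proposition to choosing a single scalar function.
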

\begin{proof}
To illustrate this fact, we employ the rank-one formalism summarized in Appendix \ref{sA} 
(cf.\ \eqref{5.40}--\eqref{5.59}) and identify $A_- = A_0$, 
$A_+ = A_{\alpha_0}$ in \eqref{5.40} for an 
appropriate $\alpha_0 > 0$ to be fixed subsequently. In particular, we choose $A_0$ to be the operator of multiplication with the variable $\lambda$ in the Hilbert space $L^2(\bbR; d\omega_0(\lambda))$, where $\supp (d \omega)$ is compact (and 
hence $A_0$ is bounded). 
We employ \eqref{5.53} by choosing 
$\xi_0 = \xi(\, \cdot \, ; A_+, A_-) \in L^\infty(\bbR; d\nu)$ of compact support, satisfying 
\begin{equation} 
c_0 \leq \xi_0 \leq d_0 \, \text{ on } \, (-\varepsilon_0, \varepsilon_0) 
\, \text{ for some } \, 0 < c_0 < d_0 < 1.   \lb{5.33a} 
\end{equation} 
Then $A_- = A_0$ has purely absolutely continuous spectrum in $(-\varepsilon_0, \varepsilon_0)$ 
by \eqref{5.52}. Next, we choose  
$\xi_0$ to be smooth on $\bbR \backslash \{0\}$, but temporarily leave open the 
behavior of $\xi_0$ at $\nu = 0$ (except for the constraint \eqref{5.33a}). We note that by \eqref{5.43}, \eqref{5.44}, 
the choice of $\xi_0$ also implies a 
certain choice of $\alpha_0 > 0$. Employing the elementary fact (cf.\ 
\eqref{5.41}, \eqref{5.42}, \eqref{5.55}, \eqref{5.56}) that 
\begin{align}
\begin{split} 
& \Im(1 + \alpha_0 F_0(\nu + i \varepsilon)) = \alpha_0 \Im(F_0(\nu + i \varepsilon))    \\
& \quad = i \alpha_0 \exp(\pi (Q_{\varepsilon} \xi_0)(\nu)) \sin(\pi (P_{\varepsilon} \xi_0)(\nu)),  
\quad \nu \in \bbR, \; \varepsilon > 0,    
\end{split} 
\end{align}
one notes that 
\begin{equation}
\lim_{\varepsilon \downarrow 0} (P_{\varepsilon} \xi_0)(\nu) = \xi_0(\nu) \in [c,d], \quad 
\nu \in (- \varepsilon_0, \varepsilon_0) \backslash \{0\},  
\end{equation}
and hence 
\begin{equation}
0 < \lim_{\varepsilon \downarrow 0} \sin(\pi (P_{\varepsilon} \xi_0)(\nu)) \, 
\text{ exists finitely for all $\nu \in (- \varepsilon_0, \varepsilon_0) \backslash \{0\}$}
\end{equation}
since by hypothesis $0 < c < d < 1$. By our hypotheses on $\xi_0$ also 
\begin{equation}
\lim_{\varepsilon \downarrow 0} (Q_{\varepsilon} \xi_0)(\nu) \, 
\text{ exists finitely for all $\nu \in (- \varepsilon_0, \varepsilon_0) \backslash \{0\}$}
\end{equation} 
as a consequence of \eqref{5.57}--\eqref{5.59}. Thus, by \eqref{5.48}--\eqref{5.50} also 
$A_+ = A_{\alpha_0}$ has purely absolutely continuous spectrum in 
$(- \varepsilon_0, \varepsilon_0)$ (we also note that the singular 
continuous spectrum of a self-adjoint operator is necessarily supported on an uncountable set). To 
conclude the proof it suffices to note that we largely left open the behavior of $\xi_0$ at $0$ (apart 
from \eqref{5.33a}) and hence we may or may not choose $0$ to be a right and/or left Lebesgue 
point of $\x_0$.   
\end{proof}

We add an example kindly sent to us by A.\ Poltoratski \cite{Po13}: 

\begin{remark} \lb{r5.9} 
Again, in the context of  rank-one perturbations, \eqref{5.40}, consider the following 
construction of $\xi_0 = \xi(\, \cdot \,; A_+, A_-)$. Consider the sequences $\{1/n\}_{n \in \bbN}$ 
and $\{- 1/n\}_{n \in \bbN}$ and their midpoints, and suppose $\xi_0$ jumps by $+1$ at 
$\pm 1/n$ and by $-1$ at the sequence of midpoints and equals zero outside $[-1, 1]$. Then 
$A_- = A_0$ will be the operator of multiplication by $\lambda$ in $L^2(\bbR; d \mu_0(\lambda))$  
where $d \mu_0$ now has point masses at $\pm 1/n$ (and we make the choices mentioned in the 
paragraph following \eqref{5.40}). Clearly, $\xi_0$ has neither a right nor a 
left Lebesgue point at $\lambda = 0$. $\Diamond$
\end{remark}

If one leaves the realm of rank-one perturbations, examples involving singular continuous spectral 
measures can very easily be found:

\begin{remark} \lb{r5.10} 
Suppose $A_{\pm}$ are of the type $A_- = A_0 \oplus 0$, $A_+ = A_0 \oplus B_1$, where $\cH$ 
decomposes into $\cH_0 \oplus \cH_1$ with $\cH_1 = \ell^2(\bbN)$ and $A_0$ is a generic example of an operator with singular continuous spectrum in $\cH_0$ and $B_1 \in \cB_1\big(\ell^2(\bbN)\big)$. 
Then $\xi (\, \cdot \, ;A_+, A_-) = \xi(\, \cdot \, ; B_1,0)$ and one can choose $B_1$ in such a way that 
the step function $\xi(\, \cdot \, ; B_1,0)$ does, or does not, have $0$ as a right Lebesgue point. $\Diamond$
\end{remark}

These considerations underscore the independence of the right/left Lebesgue point 
hypothesis on $\xi(\, \cdot \,; A_+, A_-)$ in Theorem \ref{t5.5} from our main Hypothesis \ref{h2.1}.

\section{The Witten Index of $\bsD_\bsA^{}$ in the Special Case $\dim(\cH) < \infty$} 
\lb{s5}

In this section we briefly treat the special finite-dimensional 
case, $\dim(\cH) < \infty$,  and explicitly compute the Witten index of 
$\bsD_\bsA^{}$ irrespective of whether or not $\bsD_\bsA^{}$ is a 
Fredholm operator in $L^2(\bbR; \cH)$.  

We denote by 
\begin{equation}
\#_{>} (S) \, \text{ (resp.\ $\#_{<} (S)$)}
\end{equation}
the number of strictly positive (resp., strictly negative) eigenvalues of a 
self-adjoint operator $S$ in $\cH$, counting multiplicity.   

In the special case $\dim(\cH) < \infty$, Hypothesis \ref{h2.1} considerably simplifies and one is then 
left with the following set of assumptions:

\begin{hypothesis} \lb{h4.1}
Suppose that $\cH$ is a complex Hilbert space with $\dim(\cH) < \infty$. \\
$(i)$ Assume $A_- \in \cB(\cH)$ is self-adjoint matrix in $\cH$. \\
$(ii)$ Suppose there exists a family of bounded self-adjoint matrices $\{B(t)\}_{t\in\bbR}$ 
that is locally absolutely continuous on $\bbR$ such that 
\begin{equation}  \lb{4.1}
\int_\bbR dt \, \big\|B'(t)\big\|_{\cB(\cH)} < \infty.
\end{equation}
\end{hypothesis}

\begin{theorem} \lb{t4.2}
Assume Hypothesis \ref{h4.1}. Then $\xi(\,\cdot\,; A_+,A_-)$ has a
piecewise constant representative on $\bbR$, the right limit  $\xi(0_+; \bsH_2, \bsH_1)$ exists, 
and $\xi(\,\cdot\, ; \bsH_2, \bsH_1)$ has a continuous 
representative on $(0,\infty)$. Moreover, the resolvent and semigroup regularized Witten 
indices $W_r(\bsD_\bsA^{})$ and $W_s(\bsD_\bsA^{})$ exist, and 
\begin{align}
W_r(\bsD_\bsA^{}) = W_s(\bsD_\bsA^{}) &= \xi(0_+; \bsH_2, \bsH_1)    \lb{4.25} \\
&= [\xi(0_+; A_+,A_-) + \xi(0_-; A_+,A_-)]/2    \lb{4.26} \\
&= \f{1}{2} [\#_{>} (A_+) - \#_{>} (A_-)] - 
\f{1}{2} [\#_{<} (A_+) - \#_{<} (A_-)].     \lb{4.27} 
\end{align}
In particular, in the finite-dimensional context, the Witten indices are either  
integer, or half-integer-valued.
\end{theorem}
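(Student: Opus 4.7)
The plan is to reduce the theorem to Theorem \ref{t5.5} combined with an elementary eigenvalue count. First I would verify that Hypothesis \ref{h4.1} implies Hypothesis \ref{h2.1}: in finite dimensions all operators are bounded, all trace ideals $\cB_p(\cH)$ coincide with $\cB(\cH)$, the domain conditions are automatic, and \eqref{4.1} implies \eqref{2.2} at once (trace norm and operator norm being equivalent up to a factor $\dim(\cH)$). Hence the full machinery of Sections \ref{s2}--\ref{s4} applies, and it only remains to check the Lebesgue point hypothesis and to compute everything explicitly.

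Next I would pin down a piecewise constant representative of $\xi(\,\cdot\,;A_+,A_-)$. Using the factorization $D_{A_+/A_-}(z)=\prod_j(\lambda_j(A_+)-z)/(\lambda_j(A_-)-z)$ (eigenvalues repeated with multiplicity) and a direct boundary-value computation of $\pi^{-1}\Im\ln$ with the branch selected in Theorem \ref{t2.4}, one obtains the left-continuous representative
\begin{equation*}
\xi(\lambda;A_+,A_-)=\tr_\cH\bigl(E_{A_-}((-\infty,\lambda))-E_{A_+}((-\infty,\lambda))\bigr),\quad\lambda\in\bbR,
\end{equation*}
which is piecewise constant with jumps located at the eigenvalues of $A_\pm$. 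In particular both one-sided limits $\xi(0_\pm;A_+,A_-)$ exist as classical limits, so $0$ is automatically a right and a left Lebesgue point. Theorem \ref{t5.5} then delivers \eqref{4.25} and \eqref{4.26} directly. The continuity of $\xi(\,\cdot\,;\bsH_2,\bsH_1)$ on $(0,\infty)$ follows from Pushnitski's formula \eqref{2.37a}: the substitution $\nu=\lambda^{1/2}s$ converts it into
\begin{equation*}
\xi(\lambda;\bsH_2,\bsH_1)=\frac{1}{\pi}\int_{-1}^{1}\frac{\xi(\lambda^{1/2}s;A_+,A_-)\,ds}{(1-s^2)^{1/2}},\quad\lambda>0,
\end{equation*}
and since $\xi(\,\cdot\,;A_+,A_-)$ is bounded with only finitely many discontinuities, dominated convergence yields continuity in $\lambda>0$.

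Finally, to derive \eqref{4.27} I would read off from the explicit formula above that
\begin{align*}
\xi(0_-;A_+,A_-)&=\#_{<}(A_-)-\#_{<}(A_+),\\
\xi(0_+;A_+,A_-)&=[\#_{<}(A_-)+\dim(\ker A_-)]-[\#_{<}(A_+)+\dim(\ker A_+)],
\end{align*}
the right limit picking up the possibly nontrivial zero-eigenspaces of $A_\pm$. Averaging these and using the partition identity $\dim(\cH)=\#_{>}(A)+\#_{<}(A)+\dim(\ker A)$ to eliminate the kernel dimensions yields
\begin{equation*}
\tfrac{1}{2}[\xi(0_+;A_+,A_-)+\xi(0_-;A_+,A_-)]=\tfrac{1}{2}[\#_{>}(A_+)-\#_{>}(A_-)]-\tfrac{1}{2}[\#_{<}(A_+)-\#_{<}(A_-)],
\end{equation*}
which is \eqref{4.27}. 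The integer/half-integer dichotomy is immediate, since $\#_{>}(A_+)-\#_{>}(A_-)$ and $\#_{<}(A_+)-\#_{<}(A_-)$ are both integers.

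The only real obstacle is bookkeeping: tracking the sign conventions and left/right limit distinctions when translating between the counting-function representative of $\xi(\,\cdot\,;A_+,A_-)$, its averaged one-sided limits at $0$, and the signature-style expression in \eqref{4.27}. All substantive analytic input --- Pushnitski's formula \eqref{2.37a}, the Lebesgue-point reduction in Lemma \ref{l4.1}\,$(i)$, and the Abelian/Tauberian theory of Section \ref{s3} --- has already been done in full generality in Theorem \ref{t5.5}; finite-dimensionality merely removes the Lebesgue-point hypothesis on $\xi(\,\cdot\,;A_+,A_-)$ because step functions automatically have one-sided limits everywhere.
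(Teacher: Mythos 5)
Your proof is correct, and while the overall strategy (reduce to Theorem \ref{t5.5} plus an eigenvalue count) matches the paper's, two of your technical steps follow genuinely different routes. First, for the identity \eqref{4.27} the paper computes $\tr_\cH(g_z(A_\pm))$ directly as a finite sum over eigenvalues and lets $z\to 0$ in the trace formula \eqref{2.19}, obtaining $W_r(\bsD_\bsA^{})=\tfrac12[\#_>(A_+)-\#_>(A_-)]-\tfrac12[\#_<(A_+)-\#_<(A_-)]$ before relating it to $\xi(0_\pm;A_+,A_-)$ via the jump formula of \cite[Proposition~8.2.8]{Ya92}; you instead work entirely from the counting-function representative $\xi(\lambda;A_+,A_-)=\tr_\cH(E_{A_-}((-\infty,\lambda))-E_{A_+}((-\infty,\lambda)))$ and eliminate the kernel dimensions algebraically --- equivalent bookkeeping, and your one-sided limits \eqref{4.32}--\eqref{4.33} agree with the paper's after using $\dim(\cH)=\#_>+\#_<+\dim(\ker)$. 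Second, and more substantively, for the continuity of $\xi(\,\cdot\,;\bsH_2,\bsH_1)$ on $(0,\infty)$ and the existence of $\xi(0_+;\bsH_2,\bsH_1)$ the paper invokes the Birman--Krein formula and the theory of matrix-valued Jost functions for matrix Schr\"odinger operators, whereas your rescaling $\nu=\lambda^{1/2}s$ in Pushnitski's formula \eqref{2.37a} plus dominated convergence (the integrand being dominated by $\|\xi\|_\infty(1-s^2)^{-1/2}$, with the step function converging pointwise off a null set of $s$) is elementary and self-contained; pushed to $\lambda\downarrow 0$ it even re-derives $\xi(0_+;\bsH_2,\bsH_1)=\tfrac12[\xi(0_+;A_+,A_-)+\xi(0_-;A_+,A_-)]$ directly, since $\int_0^1(1-s^2)^{-1/2}\,ds=\pi/2$, thereby replacing the paper's limiting computation \eqref{4.34} as well. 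The paper's scattering-theoretic route buys nothing extra here beyond situating the result in a standard framework, so your argument is a legitimate simplification; just make the $\lambda\downarrow 0$ instance of the dominated-convergence step explicit, since \eqref{4.25} requires the genuine right limit of the continuous representative and not merely the Lebesgue-point value supplied by Theorem \ref{t5.5}.
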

\begin{proof}
We will focus exclusively on $W_r(\bsD_\bsA^{})$ as its equality with 
$W_s(\bsD_\bsA^{})$ is clear from Theorem \ref{t5.5} (cf.\ \eqref{55e}).

The fact that $n = \dim(\cH)<\infty$ shows that $\xi(\,\cdot\,; A_+,A_-)$ has a piecewise 
constant representative on $\bbR$ as $A_\pm$ have precisely 
$n$ real eigenvalues counting multiplicity. More precisely, denoting by 
$\lambda_j(A_{\pm})$ the eigenvalues of $A_\pm$ and by 
$m(A; \lambda)$, $\lambda \in \bbR$, the multiplicity function associated with the linear operator $A$ 
in $\cH$ (defining $m(A; \lambda) = 0$ if $\lambda$ is not an eigenvalue of $A$), one obtains that 
\begin{equation}
\xi(\lambda_+; A_+,A_-) - \xi(\lambda_-; A_+,A_-) 
= m(A_-; \lambda) - m(A_+; \lambda),   \quad \lambda \in\bbR   \lb{4.28} 
\end{equation} 
(cf., e.g., \cite[Proposition\ 8.2.8]{Ya92}). In addition, by \eqref{2.21} and \eqref{2.22}, 
\begin{equation}
\xi(\,\cdot\,; A_+,A_-) =0 \, \text{ for $\lambda < \inf(\sigma(A_+) \cup 
\sigma(A_-))$ and $\lambda > \sup(\sigma(A_+) \cup \sigma(A_-))$.} 
\lb{4.29} 
\end{equation}
Since every real number $\nu \in \bbR$ is a right and a left Lebesgue point of $\xi(\,\cdot\,; A_+,A_-)$, 
this applies in particular to $\nu = 0$ and hence shows that $0$ is a right Lebesgue point of 
$\xi(\,\cdot\, ; \bsH_2, \bsH_1)$ by Theorem \ref{t5.5}.

The definition of $g_z(\cdot)$ in \eqref{2.16} then implies, 
\begin{align}
\tr_\cH \big(g_z(A_{\pm})\big) 
& = \sum_{\lambda_j(A_{\pm})\in \sigma(A_{\pm})} 
\f{\lambda_j(A_{\pm})}{\big[\lambda_j(A_{\pm})^2 -z\big]^{1/2}}   \no \\ 
& = \sum_{0 \neq \lambda_j(A_{\pm})\in \sigma(A_{\pm})} 
\f{\lambda_j(A_{\pm})}{\big[\lambda_j(A_{\pm})^2 -z\big]^{1/2}}, \quad  
z\in\bbC\backslash [0,\infty),    \no \\
& \underset{z \to 0, \, z \notin [0,\infty)}{\longrightarrow} 
\sum_{0 \neq \lambda_j(A_{\pm})\in \sigma(A_{\pm})} 
\sgn(\lambda_j(A_{\pm}))   \no \\
& = \#_{>} (A_{\pm}) - \#_{<} (A_{\pm}).     \lb{4.30}
\end{align}
Consequently, $W_r(\bsD_\bsA^{})$ exists and \eqref{2.19} and \eqref{4.30} imply  
\begin{align}
& z \tr_{L^2(\bbR;\cH)}\big((\bsH_2 - z \bsI)^{-1}-(\bsH_1 - z \, 
\bsI)^{-1}\big) = \frac{1}{2} \tr_\cH \big(g_z(A_+)-g_z(A_-)\big) \no \\
& \quad \underset{z \to 0, \, z \notin [0,\infty)}{\longrightarrow} 
\f{1}{2} [\#_{>} (A_+) - \#_{>} (A_-)] - 
\f{1}{2} [\#_{<} (A_+) - \#_{<} (A_-)]    \no \\
& \quad = W_r(\bsD_\bsA^{}).     \lb{4.31} 
\end{align} 
Using \eqref{4.28} and \eqref{4.29} then yields 
\begin{align}
\xi(0_+; A_+, A_-) &= \#_{>} (A_+) - \#_{>} (A_-),    \lb{4.32} \\
\xi(0_-; A_+, A_-) &= \#_{<} (A_-) - \#_{<} (A_+),     \lb{4.33}
\end{align}
proving equality of $W_r(\bsD_\bsA^{})$ with \eqref{4.26} and \eqref{4.27}. 

To prove relation \eqref{4.25} one first notes that for a.e. $\lambda > 0$, 
$\xi(\lambda; \bsH_2, \bsH_1)$ equals (up to a constant multiple) the determinant of the 
($n\times n$) scattering matrix by the celebrated Birman--Krein formula (cf.\ \cite{BK62}, 
\cite{BY93}). In the current finite-dimensional setting, the scattering matrix is explicitly related 
to the $n \times n$ matrix-valued Jost functions which yields a continuous representative of 
$\xi(\lambda; \bsH_2, \bsH_1)$ on $(0,\infty)$ and the existence of the right limit 
$\xi(0_+; \bsH_2, \bsH_1)$. The latter fact is standard in the special scalar 
case $n=1$ (cf., e,g., \cite[Ch.\ XVII]{CS89}, \cite[Ch.\ 3]{Ma86}), but it is also well-known to 
extend to the case of matrix-valued Schr\"odinger operators (see, e.g., 
\cite[Ch.\ II]{AM63}, \cite{AW13}, \cite[Sect.\ XVII.4.4]{CS89}, \cite{MO82}, \cite{NJ55}, 
\cite{Ol85}). Employing \eqref{2.28} then yields for some fixed $\varepsilon > 0$, 
\begin{align} 
& z \tr_{L^2(\bbR;\cH)} \big((\bsH_2 - z \bsI)^{-1} - (\bsH_1 - z \, 
\bsI)^{-1}\big)
= - z \int_{[0, \infty)}  \frac{\xi(\lambda; \bsH_2, \bsH_1) \, 
d\lambda}{(\lambda -z)^2}    \no \\
& \quad = - z \int_{[0, \infty)}  \frac{\xi(0_+; \bsH_2, \bsH_1) \, 
d\lambda}{(\lambda -z)^2}   \no \\
& \qquad - z \int_{[0, \infty)}  \frac{[\xi(\lambda; \bsH_2, \bsH_1) 
- \xi(0_+; \bsH_2, \bsH_1)] \, 
d\lambda}{(\lambda -z)^2}   \no \\
& \quad = \xi(0_+; \bsH_2, \bsH_1) 
- z \int_{[0, \varepsilon]}  \frac{[\xi(\lambda; \bsH_2, \bsH_1) 
- \xi(0_+; \bsH_2, \bsH_1)] \, 
d\lambda}{(\lambda -z)^2}   \no \\
& \qquad - z \int_{[\varepsilon, \infty)}  \frac{[\xi(\lambda; \bsH_2, \bsH_1) 
- \xi(0_+; \bsH_2, \bsH_1)] \, 
d\lambda}{(\lambda -z)^2}   \no \\
& \quad = \xi(0_+; \bsH_2, \bsH_1) + \oh(1) \, 
\text{ as $z\to 0$, $z \notin [0,\infty)$.}      \lb{4.34}
\end{align}
Here we used continuity of $\xi(\,\cdot\,; \bsH_2, \bsH_1)$ in the 
$\lambda$-integral times $z$ over any interval $(0,\varepsilon]$ for fixed 
$\varepsilon > 0$, the existence of the right limit $\xi(0_+; \bsH_2, \bsH_1)$, 
and the fact that the $\lambda$-integral over 
$[\varepsilon,\infty)$ times $z$ obviously tends to zero as $z \to 0$, 
$z \notin [0,\infty)$. This proves \eqref{4.25}. The latter,  together with 
\eqref{4.31}, completes the proof of Theorem \ref{t4.2}. 
\end{proof}

\section{Homology and the Witten Index} \lb{s6}

Invariance of the Fredholm index under compact perturbations is key to understanding its topological interpretation in terms of K-theory. As the Witten
index is not invariant under (relatively) compact perturbations there can be 
no direct connection to K-theory. Somewhat surprisingly, the Witten index is related 
instead to cyclic homology, as we now explain by relating the approach of \cite{GS88}, the Carey--Pincus point of view in \cite{CP86}, and the recent (unpublished) thesis of Kaad (for the more advanced results in the thesis see \cite{Ka11}, \cite{Ka11a}).

In \cite{CP86} bounded non-Fredholm operators $S,S^*$ with the property
that the commutator $[S,S^*]$ is trace class are studied. An important tool exploited there is the Carey--Pincus principal function. It is related directly to the spectral shift function. Kaad's thesis provides us with the connection between \cite{CP86} and cyclic homology and, as we show at the end of this section, with the Witten index.

We start with a brief summary of Chapter 1 of Kaad's thesis: The point of departure for this discussion are Banach algebras $A$ and $J$ where $J$ is an ideal in $A$ (not necessarily closed). We let $\cB=A/J$. By the zeroth relative continuous cyclic homology group of the pair $(J,A)$ we will understand the quotient space $HC_0(J,A) = J/ \Im(b)$. Here $b : J \otimes A + A \otimes J\to J$ is an extension of the Hochschild boundary
and is given by
\begin{equation}
b : s\otimes a + a'\otimes t\mapsto sa-as + a't-ta',
\end{equation}
where $s,t\in J$ and $a,a'\in A$.
Note that as a topological vector space $HC_0(J,A)$ is non-Hausdorff in general. That is, 
the image of the extended Hochschild boundary $b$ is not necessarily closed in $J$.
We will also make use of the first cyclic homology group of $\cB$ denoted $HC_1(\cB)$.
In the book of Loday \cite{Lo98}
he introduces the chain complex
$C_m=\cA^{\otimes m+1}$, $m=0, 1, 2, \dots$, and the two boundary maps $b, B$ being 
respectively the Hochschild boundary and the Connes' boundary. These are defined on 
$C_m$ by:
\begin{align}
\begin{split}
b(a_0, a_1, \ldots, a_n) &= (a_0a_1, a_2, \ldots, a_n)
+\sum_{i=1}^{n-1} (-1)^i(a_0,a_1,\ldots, a_ia_{i+1}, \ldots, a_n) \\
& \quad +(-1)^n(a_na_0,a_1,\ldots, a_{n-1}),
\end{split}
\end{align}
with $b:C_m\to C_{m-1}$,
and
\begin{align}
\begin{split}
B(a_0,\ldots, a_n) &= \sum_{i=0}^n (-1)^{ni}(1,a_i,\ldots,a_i,a_0,\ldots,a_{i-1}) \\
& \quad +(-1)^{ni}(a_i,1,a_{i+1},\ldots,a_n,a_0,\dots, a_{i-1}),
\end{split}
\end{align}
where $(a_0, a_1, \ldots, a_n)\in C_m$ and $B:C_m\to C_{m+1}$.
Then it is not difficult to check that we have the relations $b^2=0=B^2,\ 0=bB+Bb$. As in 
\cite{Lo98} we can form a bi-complex
$B(\cA)$ with $B(\cA)_{p,q}= \cA^{\otimes^{q-p+1}}$ with total boundary $b+B$
and hence there are homology groups $\ker (b+B)/{\Im}(b+B)$ in each degree
(being those of periodic cyclic homology)
associated with the total boundary $b+B$. We understand $HC_1(\cB)$ as the first periodic 
cyclic homology group of $\cB$.
Let $\cE$ be the $C^*$-algebra generated by $S$ and $S^*$.
Let $J=\cE\cap \cL^1(\cH)$. Following Kaad, introduce the exact sequence
\begin{equation}
X : 0 \rightarrow J \stackrel{i}{\rightarrow} \cE\stackrel{q}{\rightarrow}\cB\rightarrow 0,
\end{equation}
where $i$ is the obvious inclusion and $q$ is the quotient map to $\cB=\cE/J$.
Kaad in his thesis proves the following result:

\begin{theorem}
The operator trace determines a well defined map on the zeroth continuous
relative cyclic homology group
\begin{equation}
{\rm Tr}_*: HC_0(J, \cE)\to \bbC.
\end{equation}
This can be extended to a map on $HC_1(\cB)$ using the connecting map
\begin{equation}
\partial_X: HC_1(\cB)\to HC_0(J, \cE),
\end{equation}
coming from the above exact sequence labelled $X$.
Notice that $\cB$ is a commutative algebra. The pair $q(S), q(S^*)$ defines a class 
$q(S)\otimes q(S^*)$ in $HC_1(\cB)$. This class maps to the commutator $[S,S^*]$ 
under $\partial_X$.
\end{theorem}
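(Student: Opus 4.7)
The plan is to verify the four assertions of the theorem in order, each reducing to a standard fact about the operator trace or the connecting map in cyclic homology.

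First, I would check that $\mathrm{Tr}$ descends to $HC_0(J,\cE)$. Because $J \subseteq \cL^1(\cH)$ and $\cE \subseteq \cB(\cH)$, both products $sa$ and $as$ (for $s \in J$, $a \in \cE$) lie in $\cL^1(\cH)$, and the cyclicity of the trace gives $\mathrm{Tr}(sa-as) = 0$ and $\mathrm{Tr}(a't - ta') = 0$. Hence $\mathrm{Tr}$ vanishes on the image of the extended Hochschild boundary
\begin{equation}
b: J \otimes \cE + \cE \otimes J \to J, \qquad s \otimes a + a' \otimes t \mapsto sa - as + a't - ta',
\end{equation}
so it induces a well-defined linear map $\mathrm{Tr}_*: HC_0(J,\cE) \to \bbC$.

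Second, I would recall the construction of the connecting map $\partial_X: HC_1(\cB) \to HC_0(J,\cE)$. Given a cycle $\sum_i b_i \otimes c_i \in \cB \otimes \cB$ with $\sum_i (b_i c_i - c_i b_i) = 0$, choose lifts $B_i, C_i \in \cE$ with $q(B_i) = b_i$, $q(C_i) = c_i$. Then $\sum_i (B_i C_i - C_i B_i) \in \ker q = J$, and its class in $J/\mathrm{Im}(b)$ is independent of the choice of lifts (a change of lift produces an element in the image of $b$). This is the standard long exact sequence argument in relative cyclic homology and yields the stated $\partial_X$.

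Third, I would establish commutativity of $\cB$ and that $q(S) \otimes q(S^*)$ represents a class in $HC_1(\cB)$. Since $\cE$ is generated by $S$ and $S^*$, and $[S,S^*] \in \cL^1(\cH) \cap \cE = J$, the images $q(S)$ and $q(S^*)$ commute in $\cB$, and hence $\cB$, being generated by these commuting elements, is commutative. In particular, for any $a \otimes b \in \cB \otimes \cB$, the Hochschild boundary satisfies $b(a \otimes b) = ab - ba = 0$, so every element of $\cB \otimes \cB$ is a Hochschild cycle; combined with the $B$-closure condition (which is automatic in degree one for commutative algebras modulo degenerate elements), this shows $q(S) \otimes q(S^*)$ defines a class in $HC_1(\cB)$.

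Fourth, I would apply the formula for $\partial_X$ to this class: lifting $q(S)$ to $S$ and $q(S^*)$ to $S^*$, the prescription above gives
\begin{equation}
\partial_X\bigl(q(S) \otimes q(S^*)\bigr) = SS^* - S^*S = [S, S^*] \pmod{\mathrm{Im}(b)},
\end{equation}
which is exactly the asserted image in $HC_0(J,\cE)$.

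The main obstacle is purely bookkeeping: verifying that all the maps (trace on $HC_0$, the connecting homomorphism $\partial_X$, and the cycle condition on $q(S) \otimes q(S^*)$) are independent of the various choices of lifts and representatives. The analytic content is trivial (only cyclicity of the trace on $\cL^1$ is used), but one must be careful that the bicomplex structure $(b, B)$ producing $HC_1(\cB)$ is compatible with the relative complex defining $HC_0(J,\cE)$, so that the connecting map at the cyclic level agrees with the naive one at the Hochschild level—this is where Kaad's formulation (and the commutativity of $\cB$, which collapses much of the bicomplex) does the real work.
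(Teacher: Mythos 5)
The paper does not actually prove this theorem: it is quoted verbatim from Kaad's (unpublished) thesis, with the remark ``Kaad in his thesis proves the following result.'' So there is no in-paper argument to compare yours against, and your proposal has to stand on its own. Its overall architecture is the expected one, and three of your four steps are sound modulo the bookkeeping you yourself flag: cyclicity of the trace on $\cL^1(\cH)\cdot\cB(\cH)$ kills $\Im(b)$, so $\Tr_*$ descends to $HC_0(J,\cE)$; the connecting map is the lift-and-take-boundary construction, with well-definedness under change of lift following because $[s,a]=b(s\otimes a)$ for $s\in J$, $a\in\cE$; and evaluating it on $q(S)\otimes q(S^*)$ with the tautological lifts gives $[S,S^*]$.

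The genuine gap is in your third step, the commutativity of $\cB=\cE/J$. Your argument is that $q(S)$ and $q(S^*)$ commute (true, since $[S,S^*]\in\cE\cap\cL^1(\cH)=J$) and that $\cB$ is ``generated by these commuting elements.'' But $\cE$ is the \emph{C*-algebra} generated by $S$ and $S^*$, so it is only \emph{topologically} generated by them, while $J$ is not closed in $\cE$ (the paper stresses exactly this non-Hausdorff feature). Commutativity of the image of the dense polynomial $*$-subalgebra therefore does not pass to all of $\cB$: for general $x,y\in\cE$ obtained as norm limits $x_n\to x$, $y_n\to y$ of polynomials in $S,S^*$, the commutators $[x_n,y_n]$ lie in $J$ and converge in operator norm to $[x,y]$, which lets you conclude only that $[x,y]$ is compact (i.e., commutativity modulo $\cE\cap\cB_\infty(\cH)$), not that $[x,y]\in\cL^1(\cH)$. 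What is needed is the stronger assertion that $[x,y]$ is trace class for \emph{all} $x,y\in\cE$ — a nontrivial analytic statement in the Helton--Howe/Carey--Pincus circle of ideas, not a formal consequence of $[S,S^*]\in\cB_1(\cH)$. Either this must be proved (or cited), or the construction must be carried out on a dense $*$-subalgebra on which trace-class commutators are available; as written, your step 3 silently conflates algebraic and topological generation. The remaining caveat you already note — that the degree-one class in the $(b,B)$-bicomplex is represented by the $b$-cycle $q(S)\otimes q(S^*)$ and that $\partial_X$ is compatible with the cyclic (not merely Hochschild) identifications — is the other place where the details of Kaad's construction are doing work that a complete proof would have to supply.
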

%
\subsubsection*{How the Witten index relates to this}
\newcommand{\bma}{\left(\begin{array}{cc}}
\newcommand{\ema}{\end{array}\right)}
We will adopt a more general viewpoint here than in the earlier parts of the paper.
We suppose we have a complex separable Hilbert space
 $\mathcal K$  and form $\mathcal K^{(2)}= \mathcal K\oplus \mathcal K$.
In the situation of the model operator of the introduction $\cK$ would be $L^2(\bbR;\cH)$. 
As in the introduction we assume we have a linear, closed, densely defined operator $T$ 
on $\mathcal K$ and that $T$ and its adjoint can be combined to form
a self adjoint unbounded operator $\mathcal D=\bma 0 & T^*\\
T & 0\ema$ on $\mathcal K^{(2)}$. We restrict to the case where $\lambda$ is in the 
intersection of the resolvent sets of $TT^*$ and $T^*T$ and we make the assumption that
\begin{equation}
- \lambda
\tr_{\cK}\big((T^*T - \lambda I_{\cK})^{-1} - (TT^* - \lambda I_{\cK})^{-1}\big), 
\quad \lambda < 0,   \lb{7.7}
\end{equation}
is finite. Let us make a change of notation and set $\mu^2=-\lambda$
so that the assumption of the previous equation becomes
$(1+\mu^{-2}T^*T)^{-1}- (1 + \mu^{-2}TT^*)^{-1}$
is trace class. It is clear from this formulation that we can think of $\mu$
as scaling $T$.
This suggests that we make the passage to the bounded picture
by writing 
\begin{equation} 
F^\mu_\cD= \mu^{-1}\cD\big(1+\mu^{-2}\cD^2\big)^{-1/2}.
\end{equation} 
Then
\begin{equation}
1-(F^\mu_\cD)^2= (1+\mu^{-2}\cD^2)^{-1} 
= \bma (1+\mu^{-2}T^*T)^{-1} & 0\\ 0 & (1+\mu^{-2}TT^*)^{-1}\ema.
\end{equation}
For ease of writing let
\begin{equation}
F^\mu_\cD=\bma 0 & S_\mu^*\\ S_\mu & 0 \ema.
\end{equation}
In other words,
\begin{equation}
S_\mu= \mu^{-1}T(1+\mu^{-2}T^*T)^{-1/2},
\end{equation}
and
\begin{equation}
1-(F^\mu_\cD)^2 = \bma 1-S_\mu^*S_\mu & 0 \\
0 & 1-S_\mu S_\mu^*\ema.
\end{equation}
Consequently, our assumption that
$(1+\mu^{-2}T^*T)^{-1}- (1+\mu^{-2}TT^*)^{-1}$ is trace class
translates in the bounded picture
to the assumption that the commutator $[S_\mu, S_\mu^*]$ is trace class.
The Witten index is thus calculating
$\lim_{\mu\to 0} {\rm tr}_\cK ([S_\mu, S^*_\mu])$ whenever this limit exists. 
This puts us into the framework of Kaad's analysis by letting $\mathcal E$ be the 
$C^*$-algebra generated by $S_1$ and $S_1^*$ and using the fact that this algebra contains
the operators $S_\mu, S_\mu^*$ for $\mu> 0$. Thus we see that the 
theorem quoted above from Kaad's thesis
implies  that the Witten index is given by a scaling limit of a functional defined
on $HC_1(\cB)$.
This point of view generalizes extensively and is the topic of \cite{CGK14} and \cite{CK14}.

\appendix
\section{Some Analytical Tools} \lb{sA}
\renewcommand{\theequation}{A.\arabic{equation}}
\renewcommand{\thetheorem}{A.\arabic{theorem}}
\setcounter{theorem}{0} \setcounter{equation}{0}

We briefly collect a variety of useful tools employed at various places in this paper. 

Since Theorem \ref{t3.10}, and particularly our analysis in Section \ref{s4} relies 
on the use of right and left Lebesgue points of spectral shift functions, we start by briefly 
recalling this notion.

\begin{definition} \lb{d5.1} 
Let $f \in L^1_{\loc} (\bbR; dx)$ and $h > 0$. \\
$(i)$ The point  $x \in \bbR$ is called a right Lebesgue point of $f$ if 
there exists an $\alpha_+ \in \bbC$ such that  
\begin{equation} 
\lim_{h \downarrow 0} \f{1}{h} \int_{x}^{x + h} dy \, |f(y) - \alpha_+| = 0.   \lb{5.1} 
\end{equation} 
One then denotes $\alpha_+ = \Lf(x_+)$. \\
$(ii)$ The point $x \in \bbR$ is called a left Lebesgue point of $f$ if 
there exists an $\alpha_- \in \bbC$ such that  
\begin{equation} 
\lim_{h \downarrow 0} \f{1}{h} \int_{x - h}^{x} dy \, |f(y) - \alpha_-| = 0.   \lb{5.2} 
\end{equation}
One then denotes $\alpha_- = \Lf(x_-)$. \\ 
$(iii)$ The point $x \in \bbR$ is called a Lebesgue point of $f$ if 
there exist $\alpha \in \bbC$ such that  
\begin{equation} 
\lim_{h \downarrow 0} \f{1}{2h} \int_{x - h}^{x + h} dy \, |f(y) - \alpha| = 0.   \lb{5.3} 
\end{equation} 
One then denotes $\alpha = \Lf(x)$. That is, $x \in \bbR$ is a Lebesgue point 
of $f$ if and only if it is a left and a right Lebesgue point and 
$\alpha_+ = \alpha_- = \alpha$. 
\end{definition} 

\begin{remark} \lb{r5.2}
$(i)$ We note that the definition \eqref{5.3} of a Lebesgue point of $f$ is not universally adopted. 
For instance, \cite[Sect.\ 18, p.\ 277--278]{HS65} defines $x_0$ to be a Lebesgue point 
of $f$ if 
\begin{equation}
\lim_{h \downarrow 0} \f{1}{h} \int_{0}^h dy \, |f(x + y) + f(x_0 - y) - 2 f(x)| = 0.   \lb{5.4} 
\end{equation}
The elementary example 
\begin{equation}
f(x;\beta) = \begin{cases} 0, & x<0, \\ \beta, & x=0, \\ 1, & x>0, \end{cases} 
\quad \beta \in \bbC,     \lb{5.5} 
\end{equation}
shows that $f(\, \cdot \, ; \beta)$ satisfies \eqref{5.4} for $x_0 = 0$ if and only if $\beta = 1/2$, whereas 
there exists no $\beta \in \bbC$ such that $f(\, \cdot \, ; \beta)$ satisfies \eqref{5.3} for $x_0 = 0$. Clearly, 
$\Lf (0_+ ; \beta) = 1$ and $\Lf(0_- ; \beta) = 0$ show that $x_0 = 0$ is a right and a left Lebesgue 
point of $f(\, \cdot \, ; \beta)$. In this paper we will not exploit \eqref{5.4} but always resort to 
\eqref{5.3} and its one-sided counterparts \eqref{5.1} and \eqref{5.2}. \\
$(ii)$ Introduce 
\begin{equation}
F(x) = \int_0^x dx' \, f(x'), \quad x \in \bbR, \; f \in L^1_{\loc} (\bbR; dx),    \lb{5.6} 
\end{equation}
then clearly $F \in AC([0,R])$ for all $R>0$ and $F'(x) = f(x)$ for (Lebesgue) a.e.\ $x \in [0,\infty)$. 
If $0$ is a right Lebesgue point of $f$, then actually 
\begin{equation}
F'(0_+) = \Lf(0_+).     \lb{5.7}
\end{equation}  
Similarly, if $x$ is a Lebesgue point of $f$ then $F'(x) = \Lf(x)$. (For additional material 
in this context see, e.g., \cite[Chs.\ 15, 16]{Jo01}.) $\Diamond$
\end{remark} 

Next, to facilitate the proof of Proposition \ref{p5.8}, we review a 
few basic facts of self-adjoint rank-one perturbations of a self-adjoint operator 
$A_0$ in $\cH$, following \cite{Ar57}, \cite{AD56}, \cite{DSS94}, \cite{Do65}, \cite{GPS08}, 
\cite{GT00}, \cite{SW86}: Assuming that $f_0$ is a cyclic vector for $A_0$, we consider the family 
of self-adjoint operators $A_{\alpha}$ in $\cH$,
\begin{equation}
A_{\alpha} = A_0 + \alpha (f_0, \, \cdot \,)_{\cH} f_0, \quad \alpha \geq 0.   \lb{5.40}
\end{equation} 
For instance (cf.\ \cite{GT00}) one can choose $A_0$ to be the operator of multiplication with the variable $\lambda$ in the Hilbert space $L^2(\bbR; d\omega_0(\lambda))$, where $\supp (d \omega)$ is compact (rendering $A_0$ a bounded self-adjoint operator) and then take 
$f_0 (\lambda) = 1$ for $\omega$-a.e.\ $\lambda \in \bbR$.

Then introducing the basic object,
\begin{align}
\begin{split} 
& F_{\alpha}(z) = \big(f_0, (A_{\alpha} - z I_{\cH})^{-1} f_0\big)_{\cH} 
= \int_{\sigma(A_{\alpha})} \f{d \omega_{\alpha} (\lambda)}{\lambda - z},    \lb{5.41} \\
& \, d \omega_{\alpha} (\cdot) = d \|E_{A_{\alpha}}(\cdot) f_0\|_{\cH}^2, \; z \in \bbC_+, \; 
\alpha \geq 0, 
\end{split} 
\end{align}
it is well-known that (cf.\ \cite[Appendix\ A]{GPS08})
\begin{align}
& F_{\alpha} (z) = \f{F_0(z)}{1 + \alpha F_0(z)}, \quad 
\Im(F_{\alpha}(z)) = \f{\Im(F_0(z))}{|1 + \alpha F_0(z)|^2}, \quad z \in \bbC_+,     \lb{5.42} \\
& \ln (1 + \alpha F_0(z)) = \int_{\bbR} \f{\xi(\lambda; A_{\alpha}, A_0) \, d\lambda}{\lambda - z},   
\quad z \in \bbC_+,     \lb{5.43} \\
& \xi(\lambda; A_{\alpha}, A_0) = \lim_{\varepsilon \downarrow 0} \pi^{-1} 
\Im(\ln(1 + \alpha F_0(\lambda + i \varepsilon))) \, \text{ for a.e.\ $\lambda \in \bbR$,}    \lb{5.44} \\
& 0 \leq \xi(\, \cdot \,; A_{\alpha}, A_0) \leq 1 \, \text{ a.e.\ on $\bbR$.}    \lb{5.45} 
\end{align}
Introducing
\begin{equation}
G_0(\lambda) = \int_{\bbR} \f{d \omega_0(\lambda')}{(\lambda - \lambda')^2}, \quad 
\lambda \in \bbR,    \lb{5.46}
\end{equation} 
($G_0$ takes values in $[0,\infty) \cup \{\infty\}$), and employing the usual Lebesgue decomposition 
\begin{equation} 
d\omega_{\alpha} = d\omega_{\alpha, ac} + d\omega_{\alpha, s} = d\omega_{\alpha, ac} 
+ d\omega_{\alpha, sc}  + d\omega_{\alpha, p}, \quad \alpha \geq 0,     \lb{5.47}
\end{equation} 
we recall that (cf., \cite{Ar57}, \cite{DSS94}, \cite{Do65}, \cite{SW86})
\begin{align}
& A = \{\lambda \in \bbR \,|\, \lim_{\varepsilon \downarrow 0} 
\Im(F_0(\lambda + i \varepsilon)) \in (0, \infty)\}
\, \text{ supports $d\omega_{\alpha, ac}$ for all $\alpha \geq 0$,}    \lb{5.48} \\  
& S_{\alpha} = \{\lambda \in \bbR \,|\, \lim_{\varepsilon \downarrow 0} F_0(\lambda + i \varepsilon) 
= - 1/\alpha; \, G_0(\lambda) = \infty\} \, \text{ supports $d\omega_{\alpha, sc}$,} 
\; \alpha \in (0,\infty),   \lb{5.49} \\ 
& P_{\alpha} = \{\lambda \in \bbR \,|\, \lim_{\varepsilon \downarrow 0} F_0(\lambda + i \varepsilon) 
= - 1/\alpha; \, G_0(\lambda) < \infty\} \,  \text{ equals $\sigma_{p}(A_{\alpha})$, the set of}     \no \\
& \hspace*{6.5cm} \text{eigenvalues of $A_{\alpha}$, $\alpha \in (0,\infty)$,}    
\lb{5.50} \\
& \, d \omega_{\alpha, p} (\lambda) = \alpha^{-2}\big(1 + \alpha^2\big) \sum_{\lambda_n \in P_{\alpha}} 
G_0(\lambda_n)^{-1} d \theta(\lambda - \lambda_n), \quad \alpha > 0,   \lb{5.51}
\end{align}
where $\quad \theta(x) = \begin{cases} 1, & x > 0, \\ 0, & x < 0. \end{cases}$ 

By definition, the set $S$ supports the measure $d\nu$ if $\nu(\bbR\backslash S) = 0$. In 
particular, $A$, $S_\alpha$, and $P_\alpha$, $\alpha \in (0,\infty)$, are mutually disjoint, and 
for $\beta, \gamma \in (0,\infty)$, $\beta \neq \gamma$, $d\omega_{\beta, s}$ and 
$d\omega_{\gamma, s}$ are mutually singular. 

Moreover, by \cite[Lemma\ 12]{AD56}, 
\begin{align} 
& \text{if there exist $c, d \in [0,1]$ with $0 < d - c <1$, such that}    \no \\ 
& \quad \text{$0 \leq c \leq \xi(\lambda ;A_{\alpha}, A_0) \leq d \leq 1$ a.e.\ on 
$(a,b) \subset \bbR$},     \lb{5.52} \\
& \quad \text{then $d\omega_{0} = d\omega_{0, ac}$ is purely absolutely 
continuous on $(a,b)$.}    \no 
\end{align} 

In addition, by \cite[Theorem\ A.4 and Proposition\ A.8]{GPS08}, if $A_0 \in \cB(\cH)$ and  
a cyclic vector $f_0 \in \cH$ is fixed, then the set of possible functions 
$\xi(\, \cdot \,; A_{\alpha}, A_0)$ associated with the pair of self-adjoint operators 
$(A_{\alpha}, A_0)$, $\alpha \geq 0$, coincides with the set 
\begin{align}
\{\xi \in L^1(\bbR; d\lambda) \,|\, \xi \, \text{real-valued}, \, \supp(\xi) \, \text{compact}, \, 
0 \leq \xi \leq 1 \, \text{a.e.~on\,$\bbR$}\}.     \lb{5.53}
\end{align} 

Moreover, we also need a few facts relating the truncated Hilbert transform,
\begin{equation}
(H_{\varepsilon} f)(\lambda) = \pi^{-1} \int_{(-\infty, -\varepsilon] \cup [\varepsilon,\infty)} 
\f{d\lambda' \, f(\lambda')}{\lambda - \lambda'}, \quad \lambda \in \bbR,  \; \varepsilon > 0, 
\lb{5.54}
\end{equation} 
and the Poisson and conjugate Poisson operator for the open upper half-plane,
\begin{align}
& (P_{\varepsilon} f)(\lambda) = \pi^{-1} \int_{\bbR} 
\f{d\lambda' \,  \varepsilon f(\lambda')}{(\lambda - \lambda')^2 + \varepsilon^2}, 
 \quad \lambda \in \bbR,  \; \varepsilon > 0,    \lb{5.55} \\
& (Q_{\varepsilon} f)(\lambda) = \pi^{-1} \int_{\bbR} 
\f{d\lambda' \, (\lambda - \lambda') f(\lambda')}{(\lambda - \lambda')^2 + \varepsilon^2}, 
 \quad \lambda \in \bbR,  \; \varepsilon > 0,   \lb{5.56}
\end{align} 
for appropriate set of functions $f$, for instance, 
$f \in L^1(\bbR; d\lambda) \cap L^{\infty}(\bbR; d\lambda)$, decaying sufficiently fast near 
$\pm \infty$, and satisfying some local regularity (e.g., H\"older-type) conditions. We note that 
(cf.\, e.g., \cite[Sect.\ 4.1.1]{Gr08}),
\begin{align}
& (H f)(\lambda) = \lim_{\varepsilon \downarrow 0} (H_{\varepsilon} f)(\lambda) \, \text{ exists 
for all $\lambda \in \bbR$ such that $f$ satisfies a H\"older} \no \\
& \quad \text{condition near $\lambda$, that is, for some $C_{\lambda} > 0$, 
$\varepsilon_{\lambda} > 0$, $\delta_{\lambda} > 0$, }    \no \\ 
& \quad \text{$|f(\lambda) - f(\lambda')| \leq C_{\lambda} |\lambda - \lambda'|^{\varepsilon_{\lambda}}$  
whenever $|\lambda - \lambda'| \leq \delta_{\lambda}$.}     \lb{5.57}
\end{align}
In addition, we mention the estimates (cf.\ \cite[p.\ 375--376]{Ki09})
\begin{align}
\begin{split} 
|(H_{\varepsilon} f)(\lambda) - (Q_{\varepsilon} f)(\lambda)| \leq 
\min\bigg((P_{\varepsilon} f)(\lambda), \pi^{-1} \int_{\bbR} 
\f{d\lambda' \, \varepsilon |f(\lambda') - f(\lambda)|}{(\lambda - \lambda')^2 + \varepsilon^2}\bigg),& \\
\lambda \in \bbR,  \; \varepsilon > 0,&     \lb{5.58}
\end{split} 
\end{align}
and (cf.\ \cite[p.\ 327]{AD56})
\begin{align}
\begin{split} 
\liminf_{\varepsilon \downarrow 0} (H_{\varepsilon} f)(\lambda) \leq 
\liminf_{\varepsilon \downarrow 0} (Q_{\varepsilon} f)(\lambda) \leq
\limsup_{\varepsilon \downarrow 0} (Q_{\varepsilon} f)(\lambda) \leq 
\limsup_{\varepsilon \downarrow 0} (H_{\varepsilon} f)(\lambda),&    \\
\lambda \in \bbR, \; \varepsilon > 0.&        \lb{5.59} 
\end{split} 
\end{align}

\section{Trace Relations and the Abel Transform} \lb{sB}
\renewcommand{\theequation}{B.\arabic{equation}}
\renewcommand{\thetheorem}{B.\arabic{theorem}}
\setcounter{theorem}{0} \setcounter{equation}{0}

While \cite{GLMST11} (see also \cite{Pu08}) exclusively focused on a resolvent approach 
to this circle of ideas as evidenced by Theorem \ref{t2.2}, other approaches are of course possible 
and have, in fact, been pursued. In particular, a heat kernel approach was originally used in 
\cite{GS88}, and more recently, it was also used in \cite{St11}. In this section we will identify the 
analog of the fundamental trace formula \eqref{2.19} in terms of general functions $f$ beyond 
resolvents and semigroups. 

To set up the principal result of this appendix we need some preparations and 
recall a number of results from \cite{BR79}, \cite{GLMST11}, \cite{GS88}, \cite{Pe05}, and 
\cite[Ch.~8]{Ya92}.

Suppose that $\bsT_j$, $j=1,2$, are self-adjoint operators in the separable, complex Hilbert space 
$\boldsymbol{\cK}$ satisfying 
\begin{align} 
& \bsT_j \geq 0, \; j=1,2,   \lb{B.1} \\ 
&\big[(\bsT_2 - z \bsI)^{-1}-(\bsT_1 - z \bsI)^{-1}\big] \in \cB_1(\boldsymbol{\cK}), \quad 
z\in\rho(\bsT_1) \cap \rho(\bsT_2).    \lb{B.2}
\end{align} 
Then \cite[Theorem~8.9.1]{Ya92} implies 
\begin{align}
& [f(\bsT_2) - f(\bsT_1)] \in \cB_1(\boldsymbol{\cK}),    \lb{B.3} \\
& \tr_{\boldsymbol{\cK}}\big(f(\bsT_2) - f(\bsT_1)\big)
 = \int_{[0,\infty)} \xi(\lambda; \bsT_2,\bsT_1) \, d\lambda \, f'(\lambda),    \lb{B.4}
\end{align}
for all $f\in C^2(\bbR)$ such that 
\begin{equation}
\text{for some $\varepsilon > 0$, } \, [\lambda^2 f'(\lambda)]' \underset{\lambda \to \infty}{=} 
\Oh\big(\lambda^{-1 - \varepsilon}\big).   \lb{B.5} 
\end{equation}
Here the spectral shift function $\xi(\, \cdot \,; \bsT_2,\bsT_1)$ for the pair $(\bsT_2, \bsT_1)$ 
is normalized by 
\begin{equation}
\xi(\lambda; \bsT_2,\bsT_1) = 0, \quad \lambda < 0.    \lb{B.6}
\end{equation} 

Next, we recall the modified homogeneous Besov 
space $\wti B^1_{\infty,1}(\bbR)$ (cf., e.g., \cite{Pe05}), consisting of the homogeneous Besov space 
$\dot B^1_{\infty,1}(\bbR)$ with all constants removed, 
that is, $\wti B^1_{\infty,1}(\bbR)$ consists of $C^1(\bbR)$-functions $\phi$ satisfying 
\begin{equation}
\phi \in \wti B^1_{\infty,1}(\bbR) \, \text{ if and only if } \, 
\sup_{x \in \bbR} |\phi'(x)| + \int_{\bbR} dx \, x^{-2} \big\|(D_x^2 \phi)(x)\big\|_{L^\infty(\bbR; dx)} 
< \infty,
\end{equation} 
where $(D_x \phi)(y) = \phi(y + x) - \phi(y)$, $x, y \in \bbR$. Then, with $\cK$ a separable, complex 
Hilbert space, one has the following result. 

\begin{theorem} [\cite{Pe85}, \cite{Pe90}] \lb{t6.2} 
Let $S_\pm$ be self-adjoint operators in $\cK$ satisfying 
\begin{align} 
& \dom(S_+) = \dom(S_-),      \\ 
& \, [S_+ - S_-] \in \cB_1(\cK). 
\end{align} 
In addition, assume that $F \in \wti B^1_{\infty,1}(\bbR)$. Then
\begin{equation}
[F(S_+) - F(S_-)] \in \cB_1(\cK).     \lb{B.10}
\end{equation} 
\end{theorem}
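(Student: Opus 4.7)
The plan is to follow Peller's original strategy via a Littlewood-Paley decomposition of $F$ combined with a Bernstein-type trace-norm estimate for operator differences $F_n(S_+) - F_n(S_-)$ for entire functions $F_n$ of exponential type. A preliminary reduction handles the unbounded case: since $V := S_+ - S_-$ is defined on $\dom(S_+) = \dom(S_-)$ and extends to a trace class (in particular bounded) operator on $\cK$, one may either work with spectral truncations $S_{\pm}E_{S_\pm}([-N,N])$ and pass to the limit in the strong operator topology, or pre-compose with $\arctan$ so as to reduce to bounded self-adjoint operators whose difference is still trace class.

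Choose a dyadic smooth partition of unity $\sum_{n \in \bbZ} \varphi_n \equiv 1$ on $\bbR \backslash \{0\}$ with $\varphi_n$ supported in $\{2^{n-1} \leq |\xi| \leq 2^{n+1}\}$, and correspondingly decompose $F = \sum_{n \in \bbZ} F_n$ modulo additive constants (which drop out of $F_n(S_+) - F_n(S_-)$), where $F_n$ is entire of exponential type $\sigma_n \sim 2^n$ with $F_n \in L^\infty(\bbR)$. The defining characterization of $\wti B^1_{\infty,1}(\bbR)$ then yields the norm equivalence
\begin{equation*}
\|F\|_{\wti B^1_{\infty,1}(\bbR)} \asymp \sum_{n \in \bbZ} 2^n \|F_n\|_{L^\infty(\bbR)}.
\end{equation*}

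The crucial quantitative step is the trace-class Bernstein-type inequality
\begin{equation*}
\|F_n(S_+) - F_n(S_-)\|_{\cB_1(\cK)} \leq C \sigma_n \|F_n\|_{L^\infty(\bbR)} \|V\|_{\cB_1(\cK)},
\end{equation*}
derived from the Daletskii-Krein representation
\begin{equation*}
F_n(S_+) - F_n(S_-) = \iint_{\bbR^2} \frac{F_n(\lambda) - F_n(\mu)}{\lambda - \mu}\, dE_{S_+}(\lambda)\, V\, dE_{S_-}(\mu),
\end{equation*}
viewed as a Birman-Solomyak double operator integral. For $F_n$ of exponential type $\sigma_n$, the divided difference factorizes, via Fourier inversion and the identity $(e^{is\lambda} - e^{is\mu})/(\lambda - \mu) = is \int_0^1 e^{is(t\lambda + (1-t)\mu)}\, dt$, as
\begin{equation*}
\frac{F_n(\lambda) - F_n(\mu)}{\lambda - \mu} = i \int_{-\sigma_n}^{\sigma_n} \widehat{F_n}(s) \int_0^1 s\, e^{ist\lambda}\, e^{is(1-t)\mu}\, dt\, ds,
\end{equation*}
which lies in the integral projective (Haagerup) tensor product $L^\infty \otimes^{h} L^\infty$ with norm $O(\sigma_n \|F_n\|_{L^\infty})$. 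The Birman-Solomyak/Peller transfer principle then converts this into a Schur multiplier bound on $\cB_1(\cK)$ with the same norm, yielding the displayed inequality.

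Summing over $n$ gives
\begin{equation*}
\|F(S_+) - F(S_-)\|_{\cB_1(\cK)} \leq C \|V\|_{\cB_1(\cK)} \sum_{n \in \bbZ} 2^n \|F_n\|_{L^\infty(\bbR)} \leq C' \|V\|_{\cB_1(\cK)} \|F\|_{\wti B^1_{\infty,1}(\bbR)},
\end{equation*}
proving \eqref{B.10}. The main obstacle is the Bernstein step: it hinges on the full Birman-Solomyak theory of double operator integrals and Peller's characterization of trace-class Schur multipliers via the Haagerup tensor product, because the factor $\sigma_n \sim 2^n$ produced by differentiation cannot be offset merely by $\|F_n\|_\infty$; only the Besov-norm summability of $2^n \|F_n\|_\infty$ salvages the final estimate. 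A minor secondary technicality is justifying the Daletskii-Krein formula in the unbounded case, which is taken care of by the common domain condition together with the spectral truncation approximation.
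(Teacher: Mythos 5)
The paper does not actually prove Theorem \ref{t6.2}: it is quoted from Peller's papers \cite{Pe85}, \cite{Pe90} and used as a black box (e.g., in Lemma \ref{l3.1}\,$(ii)$ and in Appendix \ref{sC}). Your proposal is therefore not competing with an argument in the text but reconstructing Peller's original one, and as a reconstruction it has the right skeleton: dyadic Littlewood--Paley decomposition of $F$, a trace-norm Bernstein inequality for each band-limited piece obtained by exhibiting the divided difference as a Schur multiplier of $\cB_1(\cK)$ through the Birman--Solomyak/Peller double-operator-integral machinery, and summation of $2^n\|F_n\|_{L^\infty}$ against the Besov norm. Two steps are asserted more confidently than they deserve. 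First, for $F\in\wti B^1_{\infty,1}(\bbR)$ the piece $F_n$ is bounded and of exponential type $\sim 2^n$, but $\widehat{F_n}$ is in general only a compactly supported \emph{distribution}, not an $L^1$ function, so your displayed Fourier factorization of the divided difference cannot be taken literally; the standard repair is to insert a de la Vall\'ee Poussin--type kernel $w_n\in L^1(\bbR)$ with $\widehat{w_n}\equiv 1$ on $\supp\big(\widehat{F_n}\big)$ and $\|w_n\|_{L^1}\le C$ uniformly in $n$, write $F_n=F_n\ast w_n$, and run the tensor-product (Schur multiplier) estimate through $w_n$; without this the claimed bound $O(\sigma_n\|F_n\|_{L^\infty})$ on the Haagerup-tensor norm is not justified. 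Second, the reduction to unbounded $S_\pm$ is under-argued: strong convergence of spectral truncations does not yield trace-norm convergence of the differences, and what is actually needed is the uniform bound from the bounded case combined with the Fatou property of $\cB_1(\cK)$ (weak convergence plus bounded trace norms implies trace-class membership of the limit), precisely the property recalled in Remark \ref{r3.10}; the alternative of precomposing with $\arctan$ requires an independent proof that $[\arctan(S_+)-\arctan(S_-)]\in\cB_1(\cK)$ (available via the representation $\arctan(x)=\int_0^1 dt\, x\big(1+t^2x^2\big)^{-1}$ and first-order resolvent identities) \emph{and} a verification that the transplanted function belongs to the relevant Besov class, which is the same nontrivial change-of-variable issue the paper confronts in \eqref{B.21}--\eqref{B.25}. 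With these two repairs your sketch is a faithful account of \cite{Pe85}, \cite{Pe90}.
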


Following \cite{GS88}, a somewhat easier verifiable condition can be derived as follows. We denote
\begin{equation}
\big(\wti F\big)(k) = (2 \pi)^{-1/2} \slim_{R \to \infty} \int_{[-R, R]} d\nu \, F(x) e^{- i k \nu} \, 
\text{ for a.e.\ $k \in \bbR$}, \; F \in L^2(\bbR; d\nu),     
\end{equation}
and use the same symbol for the Fourier transform if $F \in L^1(\bbR; d\nu)$.

\begin{theorem} [\cite{GS88}] \lb{t6.3} 
Let $S_\pm$ be self-adjoint operators in $\cK$ satisfying  
\begin{align} 
& \dom(S_+) = \dom(S_-),     \\
& \, [S_+ - S_-] \in \cB_p(\cH) \, \text{ for some $p \in [1,\infty) \cup \{\infty\}$.} 
\end{align} 
In addition, assume that 
$(1 + |\cdot|) \wti F \in L^1(\bbR; dk)$. Then
\begin{equation}
\|F(S_+) - F(S_-)\|_{\cB_p(\cK)} \leq \bigg[(2 \pi)^{-1/2} 
\int_{\bbR} dk \, |k| \big|\wti F(k)\big|\bigg]
\|S_+ - S_-\|_{\cB_p(\cK)}.   \lb{B.14}
\end{equation} 
Finally, 
\begin{equation}
F, F', F'' \in L^2(\bbR; d\nu) \, \text{ implies } \,  (1 + |\cdot|) \wti F \in L^1(\bbR; dk).     \lb{B.15}  
\end{equation} 
\end{theorem}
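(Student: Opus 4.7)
The plan is to express $F(S_\pm)$ via Fourier inversion, reduce to estimating the differences of unitary groups $e^{ikS_\pm}$ by Duhamel's identity, and then use the unitarity of $e^{ikS_\pm}$ to bound each term in the $\cB_p$ norm by $|k| \|S_+ - S_-\|_{\cB_p(\cK)}$. The hypothesis $(1+|\cdot|)\wti F \in L^1(\bbR; dk)$ ensures that the integral representations involved converge absolutely, and the factor $|k|$ that arises from Duhamel couples with $\wti F(k)$ to produce the desired quantitative estimate.

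More precisely, I would first note that the assumption $[S_+ - S_-] \in \cB_p(\cK)$ combined with $\dom(S_+) = \dom(S_-)$ guarantees that $S_+ - S_-$, originally defined on $\dom(S_-)$, extends uniquely by closure to a bounded operator on $\cK$ belonging to $\cB_p(\cK)$ (since any $\cB_p$-operator is bounded). By Fourier inversion, valid pointwise under our hypothesis $\wti F \in L^1(\bbR; dk)$, one has $F(\lambda) = (2\pi)^{-1/2}\int_{\bbR} dk \, \wti F(k) e^{ik\lambda}$, and by bounded functional calculus this yields the Bochner integral identity
\begin{equation}
F(S_\pm) = (2\pi)^{-1/2}\int_{\bbR} dk \, \wti F(k) e^{ikS_\pm},
\end{equation}
converging absolutely in $\cB(\cK)$. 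Subtracting,
\begin{equation}
F(S_+) - F(S_-) = (2\pi)^{-1/2}\int_{\bbR} dk \, \wti F(k)\bigl[e^{ikS_+} - e^{ikS_-}\bigr].
\end{equation}

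The heart of the argument is Duhamel's identity: since $\frac{d}{ds}\bigl[e^{i(k-s)S_-} e^{isS_+}\bigr] = i \, e^{i(k-s)S_-} (S_+ - S_-) e^{isS_+}$ on $\dom(S_-)$, integrating from $0$ to $k$ and extending by density yields
\begin{equation}
e^{ikS_+} - e^{ikS_-} = i\int_0^k ds \, e^{i(k-s)S_-}(S_+-S_-)e^{isS_+},
\end{equation}
as an identity in $\cB_p(\cK)$. Since $e^{i(k-s)S_-}$ and $e^{isS_+}$ are unitary and therefore preserve the $\cB_p(\cK)$-norm exactly, one obtains $\|e^{ikS_+} - e^{ikS_-}\|_{\cB_p(\cK)} \le |k|\,\|S_+-S_-\|_{\cB_p(\cK)}$. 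Inserting this into the Fourier representation and using the $\cB_p$-valued version of Minkowski's integral inequality (legitimate because $(1+|\cdot|)\wti F \in L^1$) gives precisely \eqref{B.14}. The main subtlety is justifying the Bochner-integral manipulations and the Duhamel formula at the $\cB_p$ level; these are routine once one approximates $\wti F$ by compactly supported functions and uses norm continuity of $k\mapsto e^{ikS_\pm}$ in the strong operator topology together with dominated convergence.

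Finally, the implication \eqref{B.15} follows from Plancherel: $F, F''\in L^2(\bbR; d\nu)$ translate to $\wti F, k^2\wti F \in L^2(\bbR; dk)$, hence $(1+k^2)\wti F \in L^2(\bbR; dk)$. Cauchy--Schwarz then yields
\begin{equation}
\int_{\bbR} dk \, (1+|k|)|\wti F(k)| \le \bigl\|(1+k^2)\wti F\bigr\|_{L^2(\bbR; dk)} \biggl(\int_{\bbR} \frac{(1+|k|)^2\,dk}{(1+k^2)^2}\biggr)^{1/2},
\end{equation}
and the last integral is bounded by $2\int_{\bbR} dk\,(1+k^2)^{-1} = 2\pi < \infty$, completing the argument. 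I do not anticipate a serious obstacle; the only delicate point is the rigorous interpretation of Duhamel's formula when $S_+ - S_-$ is merely $\cB_p$ rather than bounded in the strong sense, but this is handled by the extension by closure mentioned above.
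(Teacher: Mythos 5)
Your argument is correct and is essentially the paper's own proof: the same Fourier representation of $F(S_+)-F(S_-)$ combined with the Duhamel identity for $e^{ikS_+}-e^{ikS_-}$ and unitary invariance of the $\cB_p$-norm yields \eqref{B.14}. For \eqref{B.15} you apply a single Cauchy--Schwarz with weight $(1+k^2)^{-1}$ (using only $F,F''\in L^2$), whereas the paper splits the integral at $|k|=1$ and uses $F,F'$ on one piece and $F,F''$ on the other; this is an inessential variation of the same estimate.
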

\begin{proof}
To prove \eqref{B.14} it suffices to note that 
\begin{equation}
F(S_+) - F(S_-) = (2 \pi)^{-1/2} \int_{\bbR} dk \, \wti F(k) \big[e^{i k S_+} - e^{i k S_-}\big]
\end{equation}
and 
\begin{equation}
e^{i k S_+} - e^{i k S_-} = i k \int_0^1 ds \, e^{i s S_+} [S_+ - S_-] e^{i (1-s)S_-}, \quad 
k \in \bbR,
\end{equation}
imply \eqref{B.14}. 

If $F, F', F'' \in L^2(\bbR; d\nu)$, \cite[Lemma~7]{PS09} (see also \cite{PS10}) yields, 
\begin{align} 
& \big\|\wti F\big\|_{L^1(\bbR; dk)} = \int_{[-1,1]} dk \, \big| \wti F (k)\big| + 
\int_{\bbR\backslash [-1,1]} dk \, |k|^{-1} \big|k \wti F (k)\big|   \no \\ 
& \quad \leq 2^{1/2} \bigg(\int_{[-1,1]} dk \, \big| \wti F (k)\big|^2 \bigg)^{1/2}   \no \\
& \qquad + \bigg(\int_{\bbR \backslash [-1,1]} dk \, |k|^{-2} \bigg)^{1/2} 
\bigg(\int_{\bbR \backslash [-1,1]} dk \, \big|k \wti F (k)\big|^2 \bigg)^{1/2}    \no \\
& \quad \leq 2^{1/2} \big[\|F\|_{L^2(\bbR; d\nu)} + \|F'\|_{L^2(\bbR; d\nu)}\big],      \lb{B.17}  \\
& \big\|k \wti F\big\|_{L^1(\bbR; dk)} = \int_{[-1,1]} dk \, |k| \big| \wti F (k)\big| + 
\int_{\bbR\backslash [-1,1]} dk \, |k|^{-1} \big|k^2 \wti F (k)\big|   \no \\ 
& \quad \leq 2^{1/2} \bigg(\int_{[-1,1]} dk \, \big| \wti F (k)\big|^2 \bigg)^{1/2}   \no \\
& \qquad + \bigg(\int_{\bbR \backslash [-1,1]} dk \, |k|^{-2} \bigg)^{1/2} 
\bigg(\int_{\bbR \backslash [-1,1]} dk \, \big|k^2 \wti F (k)\big|^2 \bigg)^{1/2}    \no \\
& \quad \leq 2^{1/2} \big[\|F\|_{L^2(\bbR; d\nu)} + \|F''\|_{L^2(\bbR; d\nu)}\big],     \lb{B.18}
\end{align} 
implying $(1 + |\cdot|) \wti F \in L^1(\bbR; dk)$.
\end{proof}

We note that estimate \eqref{B.17} was earlier derived in \cite[Corollary 3.233]{BR79}, with the 
constant $2^{1/2}$ replaced by $\pi^{1/2}$. 

More generally, assuming 
\begin{equation}
h \in C^1(\bbR) \cap L^{\infty}(\bbR) \, \text{ to be real-valued,} \quad h'(\nu) > 0, \; \nu \in \bbR, 
\lb{B.21} 
\end{equation}
if one knows that 
\begin{equation}
[h(S_+) - h(S_-)] \in \cB_1(\cK)    \lb{B.22}
\end{equation} 
(rather than $[S_+ - S_-] \in \cB_1(\cK)$), to conclude $[F(S_+) - F(S_-)] \in \cB_1(\cK)$ 
one needs to verify 
\begin{equation}
F \circ h^{-1} \in \wti B^1_{\infty,1}(\bbR)    \lb{B.23}
\end{equation}
with $h^{-1}$ denoting the inverse function to $h$. By \eqref{B.14} and \eqref{B.15} it suffices to verify 
\begin{equation}
F \circ h^{-1}\in L^2(h(\bbR);dx), \; (F \circ h^{-1})' \in L^2(h(\bbR); dx), 
\; (F \circ h^{-1})'' \in L^2(h(\bbR); dx).    \lb{B.23*}
\end{equation}
In the particular case (cf.\ \eqref{2.16})
\begin{equation}
h(\nu) = g_{-1}(\nu) = \f{\nu}{(\nu^2 +1)^{1/2}}, \quad \nu \in \bbR, \lb{B.24}
\end{equation}
one verifies that \eqref{B.23*} is equivalent to 
\begin{align}
\begin{split} 
& (\nu^2 + 1)^{-3/4}F(\cdot) \in L^2(\bbR; d\nu), \; (\nu^2 + 1)^{3/4} F'(\cdot) \in L^2(\bbR; d \nu)),   \\
& (\nu^2 + 1)^{9/4}\big|F''(\cdot) + 3 \nu (\nu^2 + 1)^{-1} F'(\cdot) \big| \in L^2(\bbR; d \nu),    
\lb{B.25}
\end{split} 
\end{align}
and hence conditions \eqref{B.22} and \eqref{B.25} indeed guarantee that 
\begin{equation} 
[F(S_+) - F(S_-)] \in \cB_1(\cK).    \lb{B.25a} 
\end{equation} 

Continuing to assume \eqref{B.22}, we now introduce the spectral shift function associated with 
the pair $(S_+, S_-)$ via the invariance principle: Since $h(S_{\pm})$ are self-adjoint, Krein's 
trace formula in its simplest form yields (cf.\ \cite[Theorem\ 8.2.1]{Ya92})
\begin{equation}
\tr_{\cK}\big(h(S_+) - h(S_-)\big)
= \int_{[-1,1]} \xi(\omega; h(S_+), h(S_-)) \, d\omega,    \lb{B.26}
\end{equation} 
where
$\xi(\,\cdot\,; h(S_+), h(S_-))$ is the spectral shift function associated with the pair
$(h(S_+), h(S_-))$ uniquely determined by the requirement (cf.\ \cite[Sects.\ 8.9.1, 8.9.2]{Ya92})
\begin{equation}
\xi(\,\cdot\,; h(S_+), h(S_-)) \in L^1(\bbR; d\omega).  \lb{B.27}
\end{equation} 
Defining (cf.\ \cite[eq.\ 8.11.4]{Ya92})
\begin{equation}
\xi(\nu; S_+, S_-) := \xi(h(\nu); h(S_+), h(S_-)), \quad \nu\in\bbR,  \lb{B.28}
\end{equation}
this then implies (cf.\ \cite[Lemma~8.11.3]{Ya92})
\begin{equation}
\tr_{\cK}\big(\phi(S_+) - \phi(S_-)\big)
  = \int_{\bbR} \xi(\nu; S_+, S_-) \, d\nu \, \phi'(\nu),
\quad \phi \in C_0^{\infty}(\bbR).         \lb{B.29}
\end{equation}

Next, assuming in addition to \eqref{B.22} that also
\begin{equation}
\big[(S_+ - z I_{\cK})^{-1} - (S_- - z I_{\cK})^{-1}\big] \in \cB_1(\cK), \quad z \in \rho(S_+) \cap \rho(S_-), 
\lb{B.30}
\end{equation}
we contrast \eqref{B.26}--\eqref{B.29} with the following alternative approach to a trace formula of the type 
\eqref{B.29}: Introducing the spectral shift function $\widehat \xi(\,\cdot\,; S_+, S_-)$ associated with the 
pair $(S_+, S_-)$ as in \cite[eq.\ (8.7.4)]{Ya92}, one notes that $\widehat \xi(\,\cdot\,; S_+, S_-)$ is nonunique 
(in fact, it is unique up to an arbitrary additive constant) and that 
\begin{equation}
\widehat \xi(\,\cdot\,; S_+, S_-) \in L^1\big(\bbR; (|\nu| + 1)^{-2} d\nu\big)  \lb{B.31}
\end{equation}
(cf. \cite[Sect.\ 8.7]{Ya92}). At this point we arbitrarily fix the undetermined 
constant in $\widehat \xi(\,\cdot\,; S_+, S_-)$, and for simplicity, keep denoting the corresponding 
spectral shift function by the same symbol $\widehat \xi(\,\cdot\,; S_+, S_-)$.~As shown in \cite[Lemma~7.1]{GLMST11}, the functions $\xi(\,\cdot\,\; S_+, S_-)$ and $\widehat \xi(\,\cdot\,\; S_+, S_-)$ differ at most by a constant, that is, 
there exists a $C\in\bbR$ such that
\begin{equation}
\widehat \xi(\nu; S_+, S_-)=\xi(\nu; S_+, S_-) + C \, \text{ for a.e.\ } \, \nu\in \bbR.    \lb{B.32}
\end{equation}
Moreover, by \cite[Theorem\ 8.7.1]{Ya92} the trace formula 
\begin{equation}
{\tr}_{\cK} (G(S_+) - G(S_-)) = \int_{\bbR} \widehat\xi(\nu;S_+,S_-) \, d\nu \, G'(\nu)    \lb{B.33} 
\end{equation}
holds for the class of functions $G \in C^2(\bbR) \cap L^{\infty}(\bbR)$ satisfying the conditions
\begin{equation}
\text{for some $\varepsilon > 0$, } \, 
 [\nu^2 G'(\nu)]' \underset{|\nu| \to \infty}{=} O(|\nu|^{-1-\varepsilon}),   \lb{B.34} 
 \end{equation}
 and 
 \begin{align} 
 \begin{split} 
& \text{the limits } \, \lim_{\nu\to -\infty} G(\nu) = \lim_{\nu\to +\infty} G(\nu) \, \text{ and } \,    \\ 
& \quad \lim_{\nu\to -\infty} \nu^2 G'(\nu)=\lim_{\nu\to +\infty} \nu^2 G'(\nu) \, 
\text{ exist finitely.}     \lb{B.35}
\end{split} 
\end{align}
Next, one observes that \eqref{B.31} and \eqref{B.32} imply that
\begin{equation}
\xi(\,\cdot\,; S_+, S_-) \in L^1\big(\bbR; (|\nu| + 1)^{-2} d\nu\big), \lb{B.36}
\end{equation}
and combining \eqref{B.32}, \eqref{B.33}, and \eqref{B.35} one also obtains
\begin{equation}
{\tr}_{\cK} (G(S_+) - G(S_-)) = \int_{\bbR} \xi(\nu; S_+, S_-) \, d\nu \, G'(\nu).    \lb{B.37}
\end{equation}

These considerations permit one to extend the class of admissible functions $G$ in the trace 
formula \eqref{B.37} as follows.

\begin{lemma} \lb{lB.3}
Let $S_\pm$ be self-adjoint operators in $\cK$ satisfying 
\begin{align} 
& \dom(S_+) = \dom(S_-),     \lb{B.38} \\ 
& \, \big[(S_+ - z I_{\cK})^{-1} - (S_- - z I_{\cK})^{-1}\big] \in \cB_1(\cK), 
\quad z \in \rho(S_+) \cap \rho(S_-),     \lb{B.39} 
\end{align} 
Moreover, suppose that $k \in C^2(\bbR) \cap L^{\infty}(\bbR)$ is such that 
\begin{align}
& \text{for some $\varepsilon > 0$, } \, 
 [\nu^2 k'(\nu)]' \underset{|\nu|\to \infty}{=} O(|\nu|^{-1-\varepsilon}),   \lb{B.40} \\
\begin{split} 
& k_{\pm} = \lim_{\nu \to \pm \infty} k(\nu) \, \text{ exist finitely,}   \\ 
& \quad \text{the limits } \, \lim_{\nu\to -\infty} \nu^2 k'(\nu)=\lim_{\nu\to +\infty} \nu^2 k'(\nu) \, 
\text{ exist finitely,}     \lb{B.41}
\end{split} 
\end{align}
and the trace formula 
\begin{equation}
\tr_{\cK}(k(S_+) - k(S_-)) = \int_{\bbR} \xi(\nu; S_+, S_-) \, d\nu \, k'(\nu)         \lb{B.42}
\end{equation}
holds. In addition, assume that $H \in C^2(\bbR)$ satisfies the conditions
\begin{equation}
\text{for some $\varepsilon > 0$, } \, 
 [\nu^2 H'(\nu)]' \underset{|\nu|\to \infty}{=} O(|\nu|^{-1-\varepsilon}),   \lb{B.43} 
 \end{equation}
 and for some constant $c \in \bbR$, 
 \begin{align} 
 \begin{split} 
& H_{\pm} = \lim_{\nu \to \pm \infty} H(\nu) \, \text{ exist finitely, with } \, H_+ - H_- = c (k_+ - k_-),   \\ 
& \quad \text{the limits } \, \lim_{\nu\to -\infty} \nu^2 H'(\nu)=\lim_{\nu\to +\infty} \nu^2 H'(\nu) \, 
\text{ exist finitely,}     \lb{B.44}
\end{split} 
\end{align}
and 
\begin{equation}
[H(S_+) - H(S_-)] \in \cB_1(\cK).     \lb{B.45} 
\end{equation}
Then, 
\begin{equation}
{\tr}_{\cK} (H(S_+) - H(S_-)) = \int_{\bbR} \xi(\nu; S_+, S_-) \, d\nu \, H'(\nu).    \lb{B.46} 
\end{equation} 
\end{lemma}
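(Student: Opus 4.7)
The plan is to reduce Lemma \ref{lB.3} to the trace formula \eqref{B.37} (equivalently, Yafaev's Theorem 8.7.1 from \cite{Ya92}) by subtracting off a multiple of $k$ from $H$ to kill the boundary contribution at infinity. Concretely, I would introduce
\begin{equation*}
G(\nu) = H(\nu) - c \, k(\nu), \quad \nu \in \bbR,
\end{equation*}
where $c$ is the constant in \eqref{B.44}. Since $H, k \in C^2(\bbR)$ and both have finite limits at $\pm\infty$, $G \in C^2(\bbR) \cap L^{\infty}(\bbR)$. The growth condition $[\nu^2 G'(\nu)]' = \Oh(|\nu|^{-1-\varepsilon})$ follows from linearity combined with \eqref{B.40} and \eqref{B.43}, and $\lim_{\nu\to-\infty}\nu^2 G'(\nu) = \lim_{\nu\to+\infty}\nu^2 G'(\nu)$ is immediate from \eqref{B.41} and \eqref{B.44}. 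Crucially, the relation $H_+ - H_- = c(k_+ - k_-)$ built into \eqref{B.44} is precisely what forces
\begin{equation*}
G_+ := \lim_{\nu \to +\infty} G(\nu) = \lim_{\nu \to -\infty} G(\nu) =: G_-.
\end{equation*}
Hence $G$ fulfills all the requirements \eqref{B.34}--\eqref{B.35} of \eqref{B.37}, so together with \eqref{B.38}--\eqref{B.39} one obtains
\begin{equation*}
\tr_{\cK}\big(G(S_+) - G(S_-)\big) = \int_{\bbR} \xi(\nu; S_+, S_-) \, d\nu \, G'(\nu).
\end{equation*}

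Next I would exploit linearity: by \eqref{B.45} and the hypothesis that $k$ satisfies \eqref{B.42} (so in particular $[k(S_+) - k(S_-)] \in \cB_1(\cK)$), the difference $[G(S_+) - G(S_-)] = [H(S_+) - H(S_-)] - c[k(S_+) - k(S_-)]$ is trace class, confirming the consistency of the display above. Adding $c$ times the hypothesized trace formula \eqref{B.42} yields
\begin{align*}
\tr_{\cK}\big(H(S_+) - H(S_-)\big) &= \tr_{\cK}\big(G(S_+) - G(S_-)\big) + c \, \tr_{\cK}\big(k(S_+) - k(S_-)\big) \\
&= \int_{\bbR} \xi(\nu; S_+, S_-) \, d\nu \, \big(G'(\nu) + c \, k'(\nu)\big) \\
&= \int_{\bbR} \xi(\nu; S_+, S_-) \, d\nu \, H'(\nu),
\end{align*}
which is \eqref{B.46}. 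The splitting of the integral is justified by absolute convergence, since $\xi(\,\cdot\,; S_+, S_-) \in L^1(\bbR; (|\nu|+1)^{-2} d\nu)$ by \eqref{B.36}, while $G'$ and $k'$ both decay like $|\nu|^{-2}$ at infinity thanks to the growth conditions on $[\nu^2 G'(\nu)]'$ and $[\nu^2 k'(\nu)]'$.

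The only genuinely delicate point is the nonuniqueness of $\xi$ underlying \eqref{B.37}: when one invokes \cite[Theorem~8.7.1]{Ya92} for $G$, one obtains the formula with $\widehat\xi$, which by \eqref{B.32} differs from $\xi$ by an additive constant $C$. Replacing $\widehat\xi$ by $\xi + C$ introduces an extra term $C \int_{\bbR} G'(\nu) \, d\nu = C(G_+ - G_-)$, and it is precisely here that the vanishing $G_+ - G_- = 0$ engineered above is essential — it is what allows one to pass from the $\widehat\xi$-formulation to the $\xi$-formulation without a spurious boundary term. Apart from this mild bookkeeping, the proof is a routine application of linearity of the trace and the spectral shift machinery already established in the excerpt.
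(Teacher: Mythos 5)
Your proposal is correct and follows essentially the same route as the paper: the paper also sets $\hatt H = H - c\,k$, checks that this combination satisfies the hypotheses \eqref{B.34}--\eqref{B.35} of the trace formula \eqref{B.37} (with the relation $H_+ - H_- = c(k_+-k_-)$ guaranteeing $\int_{\bbR} d\nu\,\hatt H'(\nu) = 0$, which is what lets one pass from $\widehat\xi$ to $\xi$ without a spurious constant term), and then adds back $c$ times \eqref{B.42}. Your explicit remark about the nonuniqueness of $\widehat\xi$ is precisely the parenthetical observation closing the paper's own proof.
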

\begin{proof}
Introduce the function $\hatt H$ by
\begin{equation}
\hatt H(\nu) = H(\nu) - c k(\nu), \quad \nu \in \bbR.    \lb{B.47}
\end{equation}
Then $\hatt H$ satisfies the conditions \eqref{B.33}--\eqref{B.35} and hence \eqref{B.37} applies, 
\begin{align}
& {\tr}_{\cK} (H(S_+) - H(S_-)) - c \tr_{\cK}(k(S_+) - k(S_-)) 
= {\tr}_{\cK} \big(\hatt H(S_+) - \hatt H(S_-)\big)    \no \\
& \quad = \int_{\bbR} \hatt \xi(\nu;S_+,S_-) \, d\nu \, \hatt H'(\nu)  
= \int_{\bbR} \xi(\nu;S_+,S_-) \, d\nu \, \hatt H'(\nu)    \no \\
& \quad = \int_{\bbR} \xi(\nu;S_+,S_-) \, d\nu \, H'(\nu) 
- c \int_{\bbR} \xi(\nu;S_+,S_-) \, d\nu \, k'(\nu), \lb{B.48} 
\end{align}
implying \eqref{B.46}. (Here we used the fact that $\int_{\bbR} d\nu \, \hatt H'(\nu) = 0$ in the second line of \eqref{B.48} on account of the first line in \eqref{B.44}.) 
\end{proof}

Next, consider $f \in C^1([0,\infty)) \cap L^{\infty}([0,\infty); d\lambda)$, satisfying for some $C>0$, 
\begin{equation}
|f'(\lambda)| \leq C (\lambda + 1)^{-1}, \; \lambda \geq 0,    \lb{B.49} 
\end{equation}
and introduce 
\begin{equation}
F'(\nu) := \f{1}{\pi} \int_{[\nu^2,\infty)} 
\f{d\lambda \, f'(\lambda)}{(\lambda-\nu^2)^{1/2}} = \f{1}{\pi} \int_{[0,\infty)} d\tau \, \tau^{-1/2} f'(\tau + \nu^2), 
\quad \nu \in \bbR.    \lb{B.50}
\end{equation}
Using the fact 
\begin{equation}
\int_{[0,\infty)} dx \, x^{\alpha - 1} (1 + r x)^{-\beta} = 
\f{\Gamma(\alpha) \Gamma(\beta-\alpha)}{\Gamma(\beta)} r^{-\alpha}, \quad r>0, \; \beta > \alpha > 0 
\lb{B.51}
\end{equation}
(cf.\ \cite[No.~3.1943, p.~285]{GR80}, with $\Gamma(\cdot)$ the gamma function, 
see \cite[Sect.~6.1]{AS72}), one verifies that $F'$ is well-defined and satisfies
\begin{equation}
F' \in C(\bbR) \cap L^{\infty}(\bbR; d \nu), \quad |F'(\nu)| \leq C (\nu^2 + 1)^{-1/2}, \; \nu \in \bbR. 
\lb{B.52}
\end{equation}
In particular, $F'$ is an even function 
\begin{equation}
F'(- \nu) = F'(\nu), \quad \nu \in \bbR,    \lb{B.53} 
\end{equation}
and hence to determine $F$ we use without loss of generality the normalization 
\begin{equation}
F(0) = 0    \lb{B.54} 
\end{equation}
rendering $F$, given by $F(\nu) = \int_0^{\nu} d\omega \, F'(\omega)$, an odd function, 
\begin{equation}
F(- \nu) = - F(\nu), \quad \nu \in \bbR.    \lb{B.55} 
\end{equation}

One notices that \eqref{B.50} represents an Abel-type transformation. \\
We claim that 
\begin{align}
\begin{split} 
F(\nu) &= \f{\nu}{2\pi} \int_{[\nu^2,\infty)} d\lambda \, \lambda^{-1} (\lambda - \nu^2)^{-1/2} 
[f(\lambda) - f(0)]     \\
&= \f{\nu}{2\pi} \int_{[0,\infty)} d\tau \, \tau^{-1/2} (\tau + \nu^2)^{-1} [f(\tau + \nu^2) - f(0)].     \lb{B.56} 
\end{split} 
\end{align}
This can either directly been proved by integrating \eqref{B.50} with respect to $\nu$, or alternatively,
denoting the right-hand side of \eqref{B.56} temporarily by $\wti F$, by proving that 
$\wti F'(\nu) = F'(\nu)$ with $F'$ given by \eqref{B.50}, and noting that $\wti F$ is an odd function. 
We follow the latter route and obtain
\begin{align}
\wti F'(\nu) &= \f{1}{2 \pi} \int_{[0,\infty)} d\tau \, \tau^{-1/2} \big[(\tau + \nu^2)^{-1} 
- 2 \nu^2 (\tau + \nu^2)^{-2}\big] [f(\tau + \nu^2) - f(0)]     \no \\
& \quad - \f{1}{\pi} \int_{[0,\infty)} d \tau \, \tau^{1/2} (\tau + \nu^2)^{-1} f'(\tau + \nu^2) 
+ \f{1}{\pi} \int_{[0,\infty)} d \tau \, \tau^{-1/2} f'(\tau + \nu^2)    \no \\
&= \f{1}{2 \pi} \int_{[0,\infty)} d\tau \, \tau^{-1/2} \big[(\tau + \nu^2)^{-1} 
- 2 \nu^2 (\tau + \nu^2)^{-2}\big] [f(\tau + \nu^2) - f(0)]     \no \\
& \quad - \f{1}{\pi} \tau^{1/2} (\tau + \nu^2)^{-1} f(\tau + \nu^2)\bigg|_{\tau = 0}^{\infty} 
+ \f{1}{\pi} \int_{[0,\infty)} d \tau \, \tau^{-1/2} f'(\tau + \nu^2)    \no \\
& \quad + \f{1}{\pi} \int_{[0,\infty)} d\tau \, \tau^{-1/2} \big[2^{-1} (\tau + \nu^2)^{-1} 
- \tau (\tau + \nu^2)^{-2}\big] f(\tau + \nu^2)    \no \\
&= - \f{f(0)}{2 \pi} \int_{[0,\infty)} d\tau \, \tau^{-1/2} \big[(\tau + \nu^2)^{-1} 
- 2 \nu^2 (\tau + \nu^2)^{-2}\big]     \no \\ 
& \quad + \f{1}{\pi} \int_{[0,\infty)} d \tau \, \tau^{-1/2} f'(\tau + \nu^2)     \no \\
& = \f{1}{\pi} \int_{[0,\infty)} d \tau \, \tau^{-1/2} f'(\tau + \nu^2) = F'(\nu), \quad \nu \in \bbR,  
\end{align}  
since $\int_{[0,\infty)} d\tau \, \tau^{-1/2} \big[(\tau + \nu^2)^{-1} 
- 2 \nu^2 (\tau + \nu^2)^{-2}\big] = 0$ by \eqref{B.51}. 

Applying \eqref{B.51} once again, this yields 
\begin{equation}
F \in C^1(\bbR) \cap L^{\infty}(\bbR; d\nu), \quad F(\pm \infty) = \mp f(0)/2.     \lb{B.57} 
\end{equation}

Given these preparations, one obtains the following result (assuming Hypothesis \ref{h2.1} for the 
remainder of this appendix and hence having Theorem \ref{t2.2} at our disposal).

\begin{lemma} \lb{lB.4}
Assume Hypothesis \ref{h2.1}. \\
$(i)$ Suppose $F \in C^2(\bbR) \cap L^{\infty}(\bbR; d\nu)$ satisfies 
\begin{align}
\begin{split}
& (\nu^2 + 1)^{-3/4}F(\cdot) \in L^2(\bbR; d\nu), \quad (\nu^2 + 1)^{3/4} F'(\cdot) \in L^2(\bbR; d \nu),   
\lb{B.58} \\
& (\nu^2 + 1)^{9/4}\big|F''(\cdot) + 3 \nu (\nu^2 + 1)^{-1} F'(\cdot) \big|  \in L^2(\bbR; d \nu).     
\end{split} 
\end{align} 
Then
\begin{equation}
[F(A_+) - F(A_-)] \in \cB_1(\cH).     \lb{B.59}
\end{equation}
If in addition, for some $\varepsilon > 0$,  
\begin{equation} 
 [\nu^2 F'(\nu)]' \underset{|\nu|\to \infty}{=} O(|\nu|^{-1-\varepsilon}),   \lb{B.60} 
 \end{equation}
 and 
 \begin{align} 
 \begin{split} 
& F_{\pm} = \lim_{\nu \to \pm \infty} F(\nu) \, \text{ exist finitely, with } \, F_+ = - F_- ,   \\ 
& \quad \text{the limits } \, \lim_{\nu\to -\infty} \nu^2 F'(\nu)=\lim_{\nu\to +\infty} \nu^2 F'(\nu) \, 
\text{ exist finitely,}     \lb{B.61}
\end{split} 
\end{align}
then, 
\begin{equation}
{\tr}_{\cH} (F(A_+) - F(A_-)) = \int_{\bbR} \xi(\nu; A_+, A_-) \, d\nu \, F'(\nu).    \lb{B.62} 
\end{equation} 
$(ii)$ Assume $f \in C^2([0,\infty)) \cap L^{\infty}([0,\infty); d\lambda)$ satisfies   
for some $\varepsilon > 0$, $C>0$, 
\begin{equation} 
|f'(\lambda)| \leq C (\lambda + 1)^{-(5 + \varepsilon)/2},  \quad 
|f''(\lambda)| \leq C (\lambda + 1)^{-(7 + \varepsilon)/2},  \; \lambda \geq 0.      \lb{B.63}
\end{equation}
Then $F$ defined by \eqref{B.56} with $F'$ given by \eqref{B.50}, satisfies 
$F \in C^2(\bbR) \cap L^{\infty}(\bbR; d\nu)$ and the inclusions \eqref{B.58}, and hence 
\eqref{B.59} and \eqref{B.62} hold in this case. 
\end{lemma}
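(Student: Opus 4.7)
The plan is to derive Part (i) from the trace-class discussion surrounding \eqref{B.22}--\eqref{B.25a} combined with Lemma \ref{lB.3}, and then to obtain Part (ii) by verifying that the Abel-type transform $F$ satisfies the hypotheses of Part (i) via direct estimation using \eqref{B.51} and the bounds \eqref{B.63}.

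For Part (i), the trace-class inclusion \eqref{B.59} should be essentially immediate: Theorem \ref{t2.2} already guarantees $[g_{-1}(A_+) - g_{-1}(A_-)] \in \cB_1(\cH)$, so the paragraph surrounding \eqref{B.22}--\eqref{B.25a} applies with $S_\pm = A_\pm$ and $h = g_{-1}$. The $L^2$ conditions \eqref{B.58} are precisely the conditions \eqref{B.25}, and the passage \eqref{B.23*} $\Rightarrow$ \eqref{B.25a} via Theorem \ref{t6.3} then yields \eqref{B.59}. For the trace formula \eqref{B.62}, I would apply Lemma \ref{lB.3} with $k = g_{-1}$ and $H = F$. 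One first checks that $g_{-1}$ satisfies \eqref{B.40}--\eqref{B.42}: membership in $C^2(\bbR) \cap L^{\infty}$ and the required decay of $g_{-1}'(\nu) = (\nu^2+1)^{-3/2}$ are elementary, while the trace identity \eqref{B.42} for $k = g_{-1}$ follows from the invariance principle \eqref{B.26} by the change of variables $\omega = g_{-1}(\nu)$, using $d\omega = g_{-1}'(\nu) d\nu$ together with the definition \eqref{B.28} of $\xi(\,\cdot\,; A_+, A_-)$. Since $k_+ - k_- = 2$ and \eqref{B.61} gives $F_+ - F_- = 2 F_+$, the choice $c = F_+$ matches \eqref{B.44}, and Lemma \ref{lB.3} then delivers \eqref{B.62}.

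For Part (ii), the task is to verify that $F$ defined by \eqref{B.56} satisfies each of \eqref{B.58}, \eqref{B.60}, \eqref{B.61}. The starting point is that, under \eqref{B.63}, one may differentiate \eqref{B.50} once more under the integral sign to obtain
\begin{equation}
F''(\nu) = \frac{2\nu}{\pi} \int_{[0,\infty)} d\tau \, \tau^{-1/2} f''(\tau + \nu^2), \quad \nu \in \bbR,
\end{equation}
which in particular shows $F \in C^2(\bbR)$. Applying the Beta-integral identity \eqref{B.51} to the hypotheses \eqref{B.63} yields the pointwise estimates $|F'(\nu)| = \Oh\bigl((\nu^2+1)^{-(4+\varepsilon)/2}\bigr)$ and $|F''(\nu)| = \Oh\bigl(|\nu|(\nu^2+1)^{-(6+\varepsilon)/2}\bigr)$; the crucial observation here is that the decay rates of $F''$ and $3\nu(\nu^2+1)^{-1} F'$ coincide at order $|\nu|(\nu^2+1)^{-3-\varepsilon/2}$, so no delicate cancellation is required in controlling their sum. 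A direct weighted $L^2$ computation then verifies each of the three conditions in \eqref{B.58}, and the decay condition \eqref{B.60} falls out of the same pointwise bounds. The limit assertions in \eqref{B.61} are immediate from \eqref{B.57}: indeed $F_\pm = \mp f(0)/2$ gives $F_+ = -F_-$, and $\lim_{|\nu|\to\infty} \nu^2 F'(\nu) = 0$ from the pointwise bound on $F'$. With all hypotheses of Part (i) verified, \eqref{B.59} and \eqref{B.62} follow at once.

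The main technical obstacle I anticipate is the bookkeeping in Part (ii), particularly verifying the third $L^2$ condition in \eqref{B.58} on $(\nu^2+1)^{9/4}\bigl(F'' + 3\nu(\nu^2+1)^{-1} F'\bigr)$; fortunately, as noted, no cancellation between the two summands is required, so the estimate reduces to a routine application of \eqref{B.51}. The only conceptual subtlety in Part (i) lies in aligning the trace identity for $k = g_{-1}$ with the normalization \eqref{B.27} of the spectral shift function, but this is transparent once one combines \eqref{B.26} with the defining relation \eqref{B.28}.
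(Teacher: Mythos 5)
Your proposal is correct and follows essentially the same route as the paper: for (i) it invokes \eqref{2.18} together with the $h=g_{-1}$ machinery of \eqref{B.21}--\eqref{B.25a} to get \eqref{B.59}, and applies Lemma \ref{lB.3} with $k=g_{-1}$ (whose trace identity is just \eqref{2.24} at $z=-1$) to get \eqref{B.62}; for (ii) it verifies \eqref{B.58}, \eqref{B.60}, \eqref{B.61} by the same direct estimates via \eqref{B.51} that yield the paper's \eqref{B.64}, correctly noting that the two summands in the third condition of \eqref{B.58} decay at the same rate so no cancellation is needed.
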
 
\begin{proof}
$(i)$ Identifying $\cK$ with $\cH$ and $S_{\pm}$ with $A_{\pm}$,  given the inclusions \eqref{B.58}, 
assertion \eqref{B.59} is a direct consequence of \eqref{2.5}, \eqref{2.7}, \eqref{2.18}, and 
\eqref{B.21}--\eqref{B.25a}. 
The fact that $\lim_{\nu \to \pm \infty} g_{-1}(\nu) = \pm 1$, combined with \eqref{2.24} permits us 
to apply Lemma \ref{lB.3}, proving the trace formula \eqref{B.62}.  \\
$(ii)$ Assumptions \eqref{B.63} combined with \eqref{B.51} yield
\begin{align}
& (\nu^2 + 1)^{-3/4} F(\nu) \underset{|\nu| \to \infty}{=} \Oh \big(|\nu|^{-3/2}\big), \quad 
(\nu^2 + 1)^{3/4} F'(\nu) \underset{|\nu| \to \infty}{=} \Oh \big(|\nu|^{-(5/2) - \varepsilon}\big),  \no \\
& (\nu^2 + 1)^{9/4} \big|F''(\nu) + 3 \nu (\nu^2 + 1)^{-1} F'(\nu)\big| 
\underset{|\nu| \to \infty}{=} \Oh \big(|\nu|^{-(1/2) - \varepsilon}\big),    \lb{B.64} 
\end{align}
and hence the inclusions \eqref{B.58} are valid. An application of \eqref{B.57} and item $(i)$
then completes the proof. 
\end{proof}

Thus, we arrive at the principal result of this appendix: 

\begin{theorem} \lb{tB.5}
Assume Hypothesis \ref{h2.1} and suppose that for some $\varepsilon > 0$, $C>0$, 
$f \in C^2([0,\infty)) \cap L^{\infty}([0,\infty); d\lambda)$ satisfies 
\begin{equation} 
|f'(\lambda)| \leq C (\lambda + 1)^{-(5 + \varepsilon)/2},  \quad 
|f''(\lambda)| \leq C (\lambda + 1)^{-(7 + \varepsilon)/2},  \; \lambda \geq 0.     \lb{B.65}
\end{equation}
Define $F \in C^2(\bbR) \cap L^{\infty}(\bbR; d\nu)$ by \eqref{B.56} with $F'$ given by \eqref{B.50}. 
Then 
\begin{align}
\begin{split} 
& \tr_{L^2(\bbR;\cH)} (f(\bsH_2) - f(\bsH_1))
 = \int_{[0,\infty)} \xi(\lambda; \bsH_2,\bsH_1) \, d\lambda \, f'(\lambda)     \\
& \quad = \int_{\bbR} \xi(\nu; A_+,A_-) \, d\nu \, F'(\nu) 
= \tr_{\cH} (F(A_+) - F(A_-)).    \lb{B.66} 
\end{split} 
\end{align} 
\end{theorem}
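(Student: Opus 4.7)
The plan is to chain together three equalities in \eqref{B.66}, exploiting the tools already assembled in this appendix together with Pushnitski's identity \eqref{2.37a}.

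For the first equality, I would apply the Lifshitz--Krein trace formula \eqref{B.3}--\eqref{B.4} to the pair $(\bsH_1,\bsH_2)$ in $\boldsymbol{\cK} = L^2(\bbR;\cH)$. The hypotheses \eqref{B.1}--\eqref{B.2} hold: nonnegativity is \eqref{2.15} and the trace class resolvent difference is Theorem \ref{t2.2}. The growth condition \eqref{B.5} follows from \eqref{B.65} because
\[
[\lambda^2 f'(\lambda)]' \,=\, 2\lambda f'(\lambda) + \lambda^2 f''(\lambda) \,=\, \Oh\bigl(\lambda^{-3-\varepsilon}\bigr), \quad \lambda \to \infty.
\]

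For the final equality, I would invoke Lemma \ref{lB.4}\,$(ii)$ directly: the decay hypotheses \eqref{B.63} there coincide with the assumption \eqref{B.65} of the theorem, so the lemma yields both the trace class inclusion \eqref{B.59} and the trace formula \eqref{B.62} for the Abel transform $F$ of $f$ defined by \eqref{B.50}--\eqref{B.56}.

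The middle equality is the substantive step. Starting from Pushnitski's identity, I would substitute
\[
\xi(\lambda;\bsH_2,\bsH_1) \,=\, \frac{1}{\pi}\int_{-\lambda^{1/2}}^{\lambda^{1/2}}\frac{\xi(\nu;A_+,A_-)\,d\nu}{(\lambda-\nu^2)^{1/2}}
\]
into $\int_{[0,\infty)} \xi(\lambda;\bsH_2,\bsH_1)\,f'(\lambda)\,d\lambda$, exchange the order of integration (the region $\{\lambda \geq 0,\; |\nu|\leq \lambda^{1/2}\}$ rewrites as $\{\nu \in \bbR,\; \lambda \geq \nu^2\}$), and recognize the inner $\lambda$-integral as $F'(\nu)$ by definition \eqref{B.50}. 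The hard part will be justifying Fubini's theorem. For this I would use \eqref{B.65} together with the beta-function identity \eqref{B.51} to sharpen the crude bound \eqref{B.52} to
\[
|F'(\nu)| \,\leq\, C(1+\nu^2)^{-2-\varepsilon/2}, \quad \nu \in \bbR,
\]
and combine it with the fact that $\xi(\,\cdot\,;A_+,A_-) \in L^1\bigl(\bbR;(1+\nu^2)^{-3/2}d\nu\bigr)$, which follows from the normalization \eqref{2.22} via the change of variables $\omega = g_{-1}(\nu)$ from \eqref{2.21} (equivalently, from \eqref{B.36}). Since $-2-\varepsilon/2 < -3/2$, the weight implicit in this $L^1$ estimate dominates $|F'|$, yielding absolute integrability of the double integrand and legitimizing the Fubini swap. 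This delivers the middle equality and completes the chain \eqref{B.66}.
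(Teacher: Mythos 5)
Your proposal is correct and follows essentially the same route as the paper: Krein's trace formula \eqref{B.3}--\eqref{B.4} applied to $(\bsH_2,\bsH_1)$ for the first equality, Pushnitski's formula \eqref{2.37a} plus a Fubini interchange producing the Abel transform $F'$ for the middle equality, and Lemma \ref{lB.4}\,$(ii)$ for the last; your explicit Fubini justification via the sharpened bound $|F'(\nu)| = \Oh\big((1+\nu^2)^{-2-\varepsilon/2}\big)$ together with $\xi(\,\cdot\,;A_+,A_-)\in L^1\big(\bbR;(1+\nu^2)^{-3/2}d\nu\big)$ is a welcome detail the paper leaves implicit. One minor arithmetic slip: $[\lambda^2 f'(\lambda)]' = 2\lambda f'(\lambda) + \lambda^2 f''(\lambda) = \Oh\big(\lambda^{-(3+\varepsilon)/2}\big)$, not $\Oh\big(\lambda^{-3-\varepsilon}\big)$, but this still satisfies \eqref{B.5}, so nothing is affected.
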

\begin{proof}
Identifying $\boldsymbol{\cK}$ with $L^2(\bbR;\cH)$ and $\bsT_j$ with $\bsH_j$, $j=1,2$, an 
application of \eqref{B.1}--\eqref{B.6} implies the first line in \eqref{B.66}. 
Employing Pushnitski's formula \eqref{2.37a} and transforming the integral over the region
\begin{equation}
(\lambda, \nu) \in [0,\infty) \times \big[-\lambda^{1/2}, \lambda^{1/2}\big]
\end{equation}
into the region
\begin{equation}
(\nu,\lambda) \in \bbR \times \big[\nu^2,\infty), 
\end{equation} 
yields 
\begin{align}
&\int_{[0,\infty)} \xi(\lambda; \bsH_2,\bsH_1) \, d\lambda \, f'(\lambda) 
 \no \\
& \quad = \int_{[0,\infty)} d \lambda \, f'(\lambda) 
\bigg[\frac{1}{\pi}\int_{-\lambda^{1/2}}^{\lambda^{1/2}}
\frac{\xi(\nu; A_+,A_-) \, d\nu}{(\lambda-\nu^2)^{1/2}} \bigg]    \no \\
& \quad = \int_{\bbR} \xi(\nu; A_+,A_-) \, d\nu \, \frac{1}{\pi} \int_{[\nu^2,\infty)} 
\frac{d\lambda \, f'(\lambda)}{(\lambda-\nu^2)^{1/2}}. 
\end{align}
Finally, \eqref{B.50} and \eqref{B.62} imply the equalities
\begin{align}
& \int_{\bbR} \xi(\nu; A_+,A_-) \, d\nu \, \frac{1}{\pi} \int_{[\nu^2,\infty)} 
\frac{d\lambda \, f'(\lambda)}{(\lambda-\nu^2)^{1/2}}   \no \\
& \quad = \int_{\bbR} \xi(\nu; A_+,A_-) \, d\nu \, F'(\nu) 
= \tr_{\cH} (F(A_+) - F(A_-)),  
\end{align}
completing the proof of \eqref{B.66}. 
\end{proof}

We conclude this appendix by isolating the special resolvent and heat kernel cases: 

\begin{example} \lb{eB.6}
Assume Hypothesis \ref{h2.1}. \\
$(i)$ Consider the {\bf resolvent case:} Given
\begin{equation}
f(\lambda) = (\lambda - z)^{-1}, \quad f'(\lambda) = - (\lambda - z)^{-2}, 
\quad \lambda \in [0,\infty), \; z \in \bbC\backslash [0,\infty), 
\end{equation}
one indeed verifies $($but not without tears; it requires the computation of some ``nasty'' 
integrals to be found in \cite[Nos.\ 3.1962, 3.2231]{GR80}$)$ that equations \eqref{B.50} and 
\eqref{B.56} imply 
\begin{equation}
F(\nu) = \f{1}{2 z} \f{\nu}{(\nu^2 - z)^{1/2}} = g_z(\nu), \quad 
F'(\nu) = - \f{1}{2} \f{1}{(\nu^2 - z)^{3/2}}, \quad 
\nu \in \bbR, \; z \in \bbC\backslash [0,\infty).    \lb{B.72} 
\end{equation}  
One verifies that for $z=-1$ $($it suffices to consider that case only\,$)$ \eqref{B.3}--\eqref{B.6} 
clearly apply to $f(\cdot) = (\, \cdot \, - z)^{-1}$, and so does Lemma \ref{B.4} to $F(\cdot)$ in 
\eqref{B.72}, since, for instance,   
\begin{equation}
F''(\nu) + 3 \nu (\nu^2 + 1)^{-1} F'(\nu) = 0, \quad \nu \in \bbR, 
\end{equation} 
$($for $z=-1$$)$, and also condition \eqref{B.60} and \eqref{B.61} are satisfied. 
We emphasize that in order to arrive at the inclusions \eqref{B.58}, one had to rely on  
\eqref{B.21}--\eqref{B.24}, which in turn are based on double operator integral $($DOI\,$)$ 
techniques. In particular, \eqref{B.22} with $h$ given by $g_{-1}$ in \eqref{B.24} had to 
be assumed to hold, and the latter, to date, can only be verified by DOI methods.  \\[1mm]
$(ii)$ Consider the {\bf heat kernel case:} Given
\begin{equation}
f(\lambda) = e^{-s \lambda}, \quad f'(\lambda) = - s e^{- s \lambda}, 
\quad \lambda \in [0,\infty), \; s \in (0,\infty), 
\end{equation}
one verifies $($with the help of \cite[No.\ 9.2541]{GR80}$)$ that 
\begin{equation}
F(\nu) = - \f{1}{2 } \erf\big(s^{1/2} \nu\big), \quad 
F'(\nu) = - \bigg(\f{s}{\pi}\bigg)^{1/2} e^{- s \nu^2}, \quad 
\nu \in \bbR, \; s \in (0,\infty),
\end{equation}  
where we used the standard abbreviation for the error function, 
\begin{equation}
\erf(x) = \f{2}{\pi^{1/2}} \int_0^x dy \, e^{- y^2}, \quad x \in \bbR.
\end{equation}
It is easily seen that Theorem \ref{tB.5} applies. 
One notes that this extends some of the formulas recently obtained in \cite{St11}. 
\end{example}

\section{An Alternative Proof of the Estimate \eqref{3.42}} \lb{sC}
\renewcommand{\theequation}{C.\arabic{equation}}
\renewcommand{\thetheorem}{C.\arabic{theorem}}
\setcounter{theorem}{0} \setcounter{equation}{0}

The purpose of this appendix is to offer a real variable method alternative to the 
complex analytic proof of the important estimate \eqref{3.42} in Section \ref{s3}:

\begin{theorem} \lb{tB.1} 
Suppose that $0 \leq S_j$, $j=1,2$, are nonnegative, self-adjoint operators 
in $\cH$ and assume that 
\begin{equation} 
\big[(S_2 - z_0 I_{\cH})^{-1} - (S_1 - z_0 I_{\cH})^{-1}\big] \in \cB_1(\cH)  
\, \text{ for some } \, z_0 \in \rho(S_1) \cap \rho(S_2).   
\end{equation} 
Then
\begin{equation}
\big[e^{- z S_2} - e^{- z S_1}\big] \in \cB_1(\cH), \quad \Re(z) >0.
\end{equation}
Moreover, for every $\varphi \in (0,\pi/2)$, there exists
$C_\varphi>0$ such that
\begin{equation} \lb{C.3}
\big\|e^{- z S_2} - e^{- z S_1}\big\|_{\cB_1(\cH)} \leq C_\varphi
\big(|z|^{-1} + |z|\big), \quad  z \in S_{\varphi}.
\end{equation}
\end{theorem}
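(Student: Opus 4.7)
The plan is to bypass the Dunford--Riesz contour integral of Theorem~\ref{t3.9} by re-expressing $e^{-zS_2}-e^{-zS_1}$ as a functional-calculus difference applied to the bounded pair $(R_2(-1),R_1(-1))$, and then invoking Peller's quantitative trace-norm Lipschitz estimate (the quantitative version of Theorem~\ref{t6.2}, due to the original sources cited there). Since $S_j\ge 0$, the spectra of $R_j(-1)=(S_j+I_{\cH})^{-1}$ lie in $(0,1]$, and the change of variables $\tilde\lambda=1/(1+\lambda)$ pulls back $\lambda\mapsto e^{-z\lambda}$ to
\[
F(\tilde\lambda)=e^{z}\,e^{-z/\tilde\lambda},\qquad \tilde\lambda\in(0,1],
\]
so by the spectral theorem $F(R_j(-1))=e^{-zS_j}$ for $j=1,2$. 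After extending $F$ to $\tilde F\in C^\infty_c(\bbR)$ as described below, Peller's inequality produces
\[
\bigl\|e^{-zS_2}-e^{-zS_1}\bigr\|_{\cB_1(\cH)}\le C\,\|\tilde F\|_{\wti B^{1}_{\infty,1}(\bbR)}\,\bigl\|R_2(-1)-R_1(-1)\bigr\|_{\cB_1(\cH)},
\]
and the right-most factor is a fixed finite constant by hypothesis~\eqref{C.1}, to be absorbed into $C_\varphi$.

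The core of the argument is the Besov bound $\|\tilde F\|_{\wti B^{1}_{\infty,1}(\bbR)}\le C_\varphi(|z|+|z|^{-1})$. Setting $u=1/\tilde\lambda\in[1,\infty)$ and computing directly one has $|F(\tilde\lambda)|=e^{\Re(z)(1-u)}$, $|F'(\tilde\lambda)|=|z|u^2 e^{\Re(z)(1-u)}$, and $|F''(\tilde\lambda)|\le (|z|^2 u^4+2|z|u^3)e^{\Re(z)(1-u)}$; maximizing over $u\ge 1$ in the two regimes $\Re(z)\lessgtr 2$ and invoking the sector estimate $\Re(z)\ge |z|\cos\varphi$ yields
\[
\|F\|_{L^\infty((0,1])}\le 1,\ \ \|F'\|_{L^\infty((0,1])}\le C_\varphi(|z|+|z|^{-1}),\ \ \|F''\|_{L^\infty((0,1])}\le C_\varphi(|z|^2+|z|^{-2}).
\]
To build $\tilde F$ on $\bbR$ I would multiply $F$ by a smooth cutoff $\chi$ equal to $1$ on $[0,1]$ and vanishing outside $[-\delta,1+\delta]$ with $\delta=\min(1,|z|^{-1})$, extending by $0$ for $\tilde\lambda\le-\delta$; the super-polynomial decay of $F$ as $\tilde\lambda\downarrow 0^+$ makes this extension $C^\infty$ across the origin, and the $|z|$-dependent tuning of $\delta$ ensures that the cutoff contribution does not worsen the $L^\infty$-bounds on $\tilde F'$ and $\tilde F''$ beyond those satisfied by $F$ itself. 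Finally, the elementary interpolation
\[
\|\tilde F\|_{\wti B^{1}_{\infty,1}(\bbR)}\le \|\tilde F'\|_{L^\infty}+\int_{\bbR}\frac{dx}{x^2}\|D_x^2\tilde F\|_{L^\infty}\le \|\tilde F'\|_{L^\infty}+C\sqrt{\|\tilde F\|_{L^\infty}\,\|\tilde F''\|_{L^\infty}},
\]
obtained by splitting the $x$-integral at $R=\sqrt{\|\tilde F\|_{L^\infty}/\|\tilde F''\|_{L^\infty}}$ and applying $\|D_x^2\tilde F\|_{L^\infty}\le x^2\|\tilde F''\|_{L^\infty}$ for $|x|\le R$ versus $\|D_x^2\tilde F\|_{L^\infty}\le 4\|\tilde F\|_{L^\infty}$ for $|x|>R$, yields the claimed Besov bound, hence \eqref{C.3}.

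The main obstacle is extracting the sharp linear powers $|z|^{\pm 1}$ in the Besov estimate rather than the naive quadratic powers $|z|^{\pm 2}$ coming from $\|\tilde F''\|_{L^\infty}$ alone; the interpolation step above exploits $\|\tilde F\|_{L^\infty}\le 1$ to improve the exponent by a factor $1/2$ on the $\|\tilde F''\|_{L^\infty}$ side, which is exactly what is needed. A secondary bookkeeping point is the $|z|$-dependent choice of cutoff width $\delta\sim\min(1,|z|^{-1})$; without this tuning the cutoff derivative $\chi'$ would contribute a spurious term of order $\delta^{-1}$ to $\|\tilde F'\|_{L^\infty}$ that breaks the desired bound in one regime or the other. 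Both these points are quantitative rather than structural, and their verification is the technical heart of the argument.
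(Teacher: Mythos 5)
Your proof is correct, and it reaches the paper's estimate by a genuinely different route, even though it rests on the same pillar: a quantitative Peller/double-operator-integral Lipschitz bound applied to a function of a bounded resolvent. The paper's Appendix \ref{sC} proof applies that bound to the \emph{scaled} resolvents $A_z=(I_\cH+|z|S_1)^{-1}$, $B_z=(I_\cH+|z|S_2)^{-1}$ with the symbol $f(t)=e^{-[z/|z|](t^{-1}-1)}$; the scaling makes $\|f\|_{C^2}\le C_\varphi$ uniformly in $z\in S_\varphi$, and the whole factor $|z|^{-1}+|z|$ then falls out of the elementary resolvent identity \eqref{3.5a}, which gives $\|A_z-B_z\|_{\cB_1(\cH)}\le |z|\big(|z|^{-1}+|z_0|\big)^2\,\|(S_1-z_0I_\cH)^{-1}-(S_2-z_0I_\cH)^{-1}\|_{\cB_1(\cH)}$. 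You instead freeze the resolvent at $-1$, so the operator factor is a $z$-independent constant and all of the $z$-dependence is pushed into the symbol $F$; the naive $C^2$ bound of $F$ is then only $O(|z|^2+|z|^{-2})$, and you recover the linear powers via the interpolation $\|\tilde F\|_{\wti B^1_{\infty,1}(\bbR)}\lesssim\|\tilde F'\|_{L^\infty}+\big(\|\tilde F\|_{L^\infty}\|\tilde F''\|_{L^\infty}\big)^{1/2}$ together with the $|z|$-tuned cutoff width $\delta=\min(1,|z|^{-1})$. Both devices check out: the second-difference splitting at $R=(\|\tilde F\|_{L^\infty}/\|\tilde F''\|_{L^\infty})^{1/2}$ is standard, and on $[1,1+\delta]$ the cutoff contributions stay within the stated bounds. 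So your argument is sound; it costs two extra quantitative steps that the paper's scaling substitution renders unnecessary, but in exchange it invokes the operator-theoretic input only for the single fixed pair $\big((S_2+I_\cH)^{-1},(S_1+I_\cH)^{-1}\big)$ rather than for a $z$-dependent family. Two bookkeeping points to make explicit: passing from the hypothesis at $z_0$ to the point $-1$ uses Lemma \ref{l3.1}\,$(i)$, and Theorem \ref{t6.2} as stated is only qualitative, so (as you note) one must quote the quantitative form of Peller's inequality from the cited sources.
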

\begin{proof}
For $z \in S_{\varphi}$ consider the function
\begin{equation} 
f(t) = \begin{cases} e^{-[z/|z|](t^{-1}-1)},  & t>0, \\
0, & t=0. \end{cases} 
\end{equation}
Then for every self-adjoint operator $S\geq 0$ in $\cH$ one has $e^{-zS}=f((I_\cH+|z|S)^{-1})$. Next, 
set $A_z :=(I_\cH+|z|S_1)^{-1}$ 
and $B_z :=(I_\cH+|z|S_2)^{-1}$. By (a slight variation of) Theorem \ref{t6.2} (see e.g. 
\cite[Theorem 4]{PS09} and \cite[Lemma 6]{PS10}), one has 
\begin{equation} 
\|e^{-zS_1}-e^{-zS_2}\|_{\cB_1(\cH)} = \|f(A_z)-f(B_z)\|_{\cB_1(\cH)} 
\leq c \|f\|_{C^2}\|A_z - B_z\|_{\cB_1(\cH)},
\end{equation} 
where $c>0$ is a constant and $\|f\|_{C^2}=\max_{n \in \{0,1,2\}}\|f^{(n)}\|_{L^{\infty}((0,\infty); dt)}$.
We will prove that $\|f\|_{C^2} \leq C_\varphi$ for some constant $0 < C_\varphi < \infty$, depending only 
on $\varphi$. One observes that $\|f\|_{C^2}$ does not change if one replaces $z/|z|$ with 
$\Re(z)/|z|$ in the definition of the function $f$. Using this replacement one obtains 
\begin{equation} 
\|f\|_{L^{\infty}((0,\infty); dt)} = \sup_{t>0}e^{-[\Re(z)/|z|](t^{-1}-1)}=e^{\Re(z)/|z|}\leq e.
\end{equation} 
Moreover, since 
\begin{equation} 
f'(t) = \begin{cases} e^{-[\Re(z)/|z|](t^{-1}-1)} [\Re(z)/|z|] t^{-2}, & t>0, \\ 
0, & t=0, \end{cases} 
\end{equation} 
it follows that 
\begin{align} 
\|f'\|_{L^{\infty}((0,\infty); dt)} &= \frac{\Re(z)}{|z|}e^{\Re(z)/|z|} 
\sup_{t>0} \Big(e^{-[\Re(z)/|z|]t^{-1}}t^{-2}\Big)    \no \\ 
& = \bigg(\frac{\Re(z)}{|z|}\bigg)^{-1} 
e^{[\Re(z)/|z|]} \sup_{t>0} \big(t^2e^{-t}\big)     \no \\ 
& \leq c [|z|/ \Re(z)] \leq C_\varphi.
\end{align} 
Estimating $\|f''\|_{L^{\infty}((0,\infty); dt)}$ in a similar way implies the required estimate.

To finish the proof of Theorem \ref{tB.1}, it remains to estimate the norm $\|A_z - B_z\|_{\cB_1(\cH)}$. 
An application of \eqref{3.5a} (replacing $z$ by $- |z|^{-1}$) yields
\begin{align}
\begin{split}
A_z - B_z = |z| (S_1-z_0I_\cH)(I_\cH+|z|S_1)^{-1}&\big[(S_1-z_0I_\cH)^{-1}-(S_2-z_0I_\cH)^{-1}\big] \\
& \times (S_2-z_0I_\cH)(I_\cH+|z|S_2)^{-1}.
\end{split}
\end{align}
Consequently,
\begin{align}
\|A_z - B_z\|_{\cB_1(\cH)}&\leq |z| \big\|(S_1-z_0I_\cH)^{-1}-(S_2-z_0I_\cH)^{-1}\big\|_{\cB_1(\cH)}    \no \\
& \quad \times \big\|(S_1-z_0I_\cH)(I_\cH+|z|S_1)^{-1}\big\|_{\cB(\cH)}    \\
& \quad \times \big\|(S_2-z_0I_\cH)(I_\cH+|z|S_2)^{-1}\big\|_{\cB(\cH)}.    \no 
\end{align}
Observing that 
\begin{align}
\big\|(S_j-z_0I_\cH)(I_\cH+|z|S_j)^{-1}\big\|_{\cB(\cH)} \leq 
\sup_{\lambda>0} \bigg(\frac{|\lambda-z_0|}{1+\lambda|z|}\bigg)  
\leq \sup_{\lambda>0} \bigg(\frac{\lambda+|z_0|}{1+\lambda|z|}\bigg)    \no \\
\le\sup_{\lambda>0} \bigg(\frac{\lambda}{1+\lambda|z|}\bigg) 
+\sup_{\lambda>0} \bigg(\frac{|z_0|}{1+\lambda|z|}\bigg) 
= |z|^{-1}+|z_0|, \quad j = 1,2, 
\end{align}
one arrives at
\begin{equation}
\|A_z - B_z\|_{\cB_1(\cH)}\leq |z|\big[|z|^{-1} + |z_0|\big]^2 \big\|(S_1-z_0I_\cH)^{-1}
- (S_2-z_0I_\cH)^{-1}\big\|_{\cB_1(\cH)}.
\end{equation} 

Combining this estimate with the estimate of $\|f\|_{C^2}$, yields \eqref{C.3}. 
\end{proof}

\medskip

\noindent 
{\bf Acknowledgments.} We are indebted to Jens Kaad, Yuri Latushkin, Galina Levitina, 
Konstantin Makarov, Michael Pang, Alexander Pushnitski, Marcus Waurick, and Dmitriy 
Zanin for helpful discussions, and to E.\ Brian Davies and Alexei Poltoratski for valuable correspondence.

F.G.\ gratefully acknowledges the extraordinary hospitality and stimulating atmosphere during 
his five-week visit to the Australian National University (ANU), Canberra, and to the University 
of New South Wales (UNSW), Sydney, in July/August of 2012, and the funding of his visit by 
the Australian Research Council. 

A.C., D.P., and F.S.\ gratefully acknowledge financial support from the Australian Research 
Council. A.C.\ also thanks the Alexander von Humboldt Stiftung and colleagues at the University 
of M\"unster. 

Y.T.\ was partially supported by the NCN grant DEC-2011/03/B/ST1/00407 and by the EU 
Marie Curie IRSES program, project ``AOS'', No.\ 318910.

Finally A.C., F.G., F.S., and Y.T.\ thank the Erwin Schr\"odinger International Institute for 
Mathematical Physics (ESI), Vienna, Austria, for funding support for this collaboration.

 
\end{document}